 \newtheorem{thm}{Theorem} [section]
 \newtheorem{thrm}{Theorem}
 \newtheorem*{theorem*}{Theorem}
\newtheorem{definition}[thm]{Definition}
\newtheorem{lem}[thm]{Lemma}
 \newtheorem{prop}[thm]{Proposition}
\newtheorem{cor}[thm]{Corollary}
 \newtheorem{conj}[thm]{Conjecture}
 \newtheorem{prob}[thrm]{Problem}
 \newtheorem*{prob*}{Problem}
\newtheorem{assumption}[thm]{Assumption}
\theoremstyle{remark}
\newtheorem{rem}[thm]{{\bf Remark}}
\newcommand{\mP}{\mathcal{P}}  \newcommand{\mQ}{\mathcal{Q}}
 \newcommand{\mH}{\mathcal{H}} \newcommand{\mS}{\mathcal{S}}
\newcommand{\mO}{\mathcal{O}}  
\newcommand{\fO}{\mathfrak{O}}  \newcommand{\bZ}{\mathbb{Z}}
 \newcommand{\bC}{\mathbb{C}}\newcommand{\bP}{\mathbb{P}} 
 \newcommand{\bF}{\mathbb{F}}  \newcommand{\mF}{\mathcal{F}}
  \newcommand{\mN}{\mathcal{N}}
\newcommand{\ep}{\epsilon}
\newcommand{\bbbm}{ \begin{bmatrix}} \newcommand{\bebm}{\end{bmatrix}}
\newcommand{\bbm}{ \begin{pmatrix}} \newcommand{\bem}{\end{pmatrix}}
\newcommand{\bbsm}{ \left( \begin{smallmatrix}} \newcommand{\besm}{\end{smallmatrix} \right)}
\newcommand{\beq}{\begin{equation}}      \newcommand{\eeq}{\end{equation}}
\newcommand{\beqn}{\begin{eqnarray}}      \newcommand{\eeqn}{\end{eqnarray}}
\newcommand{\beqs}{\begin{eqnarray*}}      \newcommand{\eeqs}{\end{eqnarray*}}
\newcommand{\bep}{\begin{proof}}      \newcommand{\eep}{\end{proof}}
\begin{document}
\title{On the $PGL_2(q)$-orbits of  lines of $PG(3,q)$ and binary quartic forms}

\author{Krishna Kaipa$^{1,\ast}$ }
\address{$^1$Department of Mathematics, Indian Institute of Science Education and Research, Pune, Maharashtra, 411008 India.}
\address{$^{\ast}$Corresponding author}
\address{$^2$Department of Electrical Communication Engineering, Indian Institute of Science, Bangalore, 560012, India}
\address{$^3$Department of Mathematics, Indian Institute of Technology Bombay, Mumbai 400076, India
}

\author{Nupur Patanker$^2$}
\author{Puspendu Pradhan$^3$}
\email{$^1$kaipa@iiserpune.ac.in, $^2$nupurp@iisc.ac.in, $^3$puspendupradhan1@gmail.com}
\thanks{The second author is supported by NBHM, DAE, Govt. of India. The second and third author want to thank IISER Pune where this work was carried out.\\
}
\subjclass{51E20, 51N35, 14N10,  05B25, 05E10, 05E14, 05E18} 
\keywords{Rational normal curve, binary quartic forms, Klein quadric, $\jmath$-invariant}
\date{}

\begin{abstract}
We study the problem of classifying the lines of the projective $3$-space $PG(3,q)$ over a finite field $\bF_q$  into orbits of the group $G=PGL_2(q)$ of linear symmetries of the twisted cubic $C$. A generic line neither intersects $C$  nor lies in any of its osculating planes.  While the non-generic lines  have been classified into $G$-orbits in literature, it  has been an open problem to  classify the generic lines into $G$-orbits. For a general field $F$ of characteristic different from  $3$,  the twisted cubic determines a symplectic polarity on $\bP^3$. In the Klein representation of lines of $\bP^3$, the tangent lines of $C$ are represented by  a degree $4$ rational normal curve in a hyperplane $\mathcal H$ of the second exterior power $\bP^5$ of $\bP^3$. Atiyah \cite{Atiyah} studied the lines of $\bP^3$ with respect to $C$, in terms of the geometries of these two curves. 
We show that $\mathcal H$  can be identified in a $PGL_2$-equivariant way with  the space of binary quartic forms over $F$, and that pairs of polar dual lines of $\bP^3$ correspond to binary quartic forms whose apolar invariant is a square. We first solve the open problem of classifying binary quartic forms over $\bF_q$ into $G$-orbits, and then use it to solve the main problem.
\end{abstract}
 \maketitle

\section{Introduction} \label{introduction}
Let $PG(n,q)$ denote the projective space $\bP(\bF_q^{n+1})$ over a finite field $\bF_q$, and let $PGL_{n+1}(q)$ denote the projective general linear group acting on $PG(n,q)$.
Let $C$ be the twisted cubic curve in  $PG(3,q)$  defined by the image of the embedding 
\[PG(1,q) \hookrightarrow PG(3,q), \quad  (s,t) \mapsto (s^3,s^2t,st^2,t^3). \]
The standard action of $G=PGL_2(q)$ on the points $(s,t)$ of $PG(1,q)$, induces an action on the points $(s^3,s^2t,st^2,t^3)$ of $C$, and this action extends to an action of $G$ on $PG(3,q)$. We assume $q>4$ throughout this work. Under this condition,  the subgroup of  $PGL_4(q)$ preserving $C$ is isomorphic to $G$ via the action mentioned above.  The problem studied in this work is:
\begin{prob}  \label{mainproblem}
 Classify the lines of $PG(3,q)$ into $G$-orbits.
\end{prob}
The osculating plane at a point $P$ of $C$ is the unique plane of $PG(3,q)$ that intersects $C$ at $P$ with multiplicity $3$.   A line $L$ of $PG(3,q)$ is generic if it neither intersects  $C$, nor lies in any of its osculating planes. In the paper \cite{BH}, and also in the book \cite[p.236]{Hirschfeld3}, the class of generic lines is denoted by $\mO_6$, and the non-generic lines have been  partitioned into $8$ classes if char$(\bF_q) \neq 3$, and  $7$ classes if char$(\bF_q) =3$. Each of these classes is a union of $G$-orbits. The decomposition of these classes of nongeneric lines into $G$-orbits is known in literature, for example, \cite[Theorem 8.1]{BPS}, \cite[Theorem 3.1]{DMP1}, \cite{GL}. These form $9$ or $10$ orbits according as $q$ is even or odd.  On the other hand, it has been an open problem to decompose  the class of generic lines into $G$-orbits.   In \cite[Conjecture 20]{GL}, the authors conjecture that the number of orbits of generic lines  is $2q-2$ or $2q-4$ according as $q \equiv 1 \mod 6$  or $q  \equiv 5 \mod 6$. A more detailed conjecture was made by Davydov, Marcugini and Pambianco in \cite[Conjecture 8.2]{DMP1}. 

\begin{conj} \label{conj1}\cite{DMP1} The  number of $G$-orbits of generic lines of $PG(3,q)$ and their sizes are:\\

\begin{tabular}{c| *{5}{c}| c}
$q \mod 6$   & $|G|$ & $|G|/2$ & $|G|/3$ & $|G|/4$ & $|G|/12$   &  \# orbits  \\
&&&&&&\\
$3$   & $q/3$ & $q-1$ & $0$  & $(2q-6)/3$ &  $0$ &   $2q-3$  \\ 
$2$ & $1$ & $2q-4$ & $0$ & $0$ & $0$ &  $2q-3$  \\
$-2$  & $0$ & $2q-4$ & $3$ & $0$ &  $0$  &  $2q-1$  \\
$1$  & $(q-1)/3$ & $q-1$ & $2$ & $(2q-11)/3$ &  $1$  &  $2q-2$  \\
$-1$  & $(q+1)/3$ & $q-1$ & $0$ & $(2q-10)/3$ &  $0$ &   $2q-4$  \\
\end{tabular}
\end{conj}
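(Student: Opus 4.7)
The plan is to follow the two-step strategy announced in the abstract: first transfer Problem~\ref{mainproblem} to a classification problem for binary quartic forms using the Klein embedding together with the symplectic polarity determined by $C$, and then carry out the classification of binary quartic forms over $\bF_q$ case-by-case according to $q\bmod 6$. In the first step I use the $G$-equivariant identification of the hyperplane $\mathcal H\subset\bP^5$ with the space of binary quartic forms over $\bF_q$. The polarity sends a line $L$ to its polar dual $L^\perp$, and the $G$-equivariant map $L\mapsto \phi(L)\in\mathcal H$ has the property that $\phi(L)=\phi(L^\perp)$; by the apolarity statement in the abstract, the image of the generic locus is precisely the open subset of $\mathcal H$ consisting of binary quartic forms with four distinct roots and square apolar invariant. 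Thus the $G$-orbits of generic lines are in bijection with the $G$-orbits of such quartics, possibly doubled when $L$ and $L^\perp$ lie in distinct $G$-orbits; determining when the doubling occurs is one of the things to settle.

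The second step is the classification of binary quartic forms with four distinct roots in $\overline{\bF_q}$ and square apolar invariant. Here I use the classical invariants $I$ (the apolar invariant, of degree $2$) and $J$ (of degree $3$), together with the $j$-invariant $j=I^3/(I^3-27J^2)$, noting that in characteristic $2$ or $3$ these have to be replaced by the correct substitutes. The geometric stabilizer in $PGL_2$ of a binary quartic with four distinct roots is $V_4$ generically, $A_4$ when $j=0$ (equianharmonic), and it is $D_4$ when $j=1728$ (harmonic); in the harmonic case the form has non-square apolar invariant and therefore does not appear in our image, which conveniently explains why no orbit of size $|G|/8$ is present in the table of Conjecture~\ref{conj1}. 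The subgroup lattice of $A_4$ has exactly one subgroup of each of the orders $1,2,3,4,12$, matching precisely the denominators appearing in the table; the $\bF_q$-rational stabilizer is the subgroup of $A_4$ fixed by Frobenius, and which subgroup is realized depends on the rationality of certain cube roots of unity and square roots in $\bF_q$, i.e.\ on $q\bmod 6$.

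The counting then proceeds as follows. For each $q\bmod 6$ I (a) enumerate the $\bF_q$-rational values of $j$ (equivalently, of $(I:J)\in\bP^1$) such that the fibre over $j$ contains an orbit with square apolar invariant, (b) for each such $j$ determine the Frobenius-fixed stabilizer inside the geometric stabilizer, and (c) count the number of $G$-orbits in that fibre, using the standard fact that twisted orbits of a geometric orbit with stabilizer $\Gamma$ are parametrized by $H^1(\mathrm{Gal}(\overline{\bF_q}/\bF_q),\Gamma)$. The key arithmetic inputs are: whether $-3$ is a square in $\bF_q$ (determining the $j=0$ behavior and accounting for the splitting at $q\equiv\pm 1\bmod 6$), whether $\bF_q$ contains a primitive cube root of unity, and whether $\mathrm{char}(\bF_q)\in\{2,3\}$. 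Finally I translate back from quartic orbits to line orbits by checking whether $L$ and $L^\perp$ lie in the same $G$-orbit (they do exactly when the geometric stabilizer of the corresponding quartic contains an element interchanging the two preimages, a condition readable from the $V_4$-action on roots of the quartic).

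The main obstacle will be the careful bookkeeping in small characteristic: in characteristic $3$ the invariant $J$ and the discriminant coincide in a degenerate way and $j$ is not well-defined, forcing a separate treatment using the Hasse invariant or an alternative parametrization; this is reflected in the row $q\equiv 3\bmod 6$ of the table where only the stabilizer orders $3,2,4$ (no $|G|/12$, no $|G|/3$) occur. Similarly, in characteristic $2$ the harmonic and equianharmonic loci collide, explaining why the rows $q\equiv\pm 2\bmod 6$ have no $|G|/4$ or $|G|/12$ orbits and why the $|G|/3$ orbits that normally accompany $j=0$ either vanish ($q\equiv 2$) or appear in a controlled number ($3$ orbits for $q\equiv -2$). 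I expect these degenerations, together with the verification that the polar-duality doubling is trivial (i.e.\ $L$ and $L^\perp$ are always in the same $G$-orbit on the generic locus with square apolar invariant), to require most of the work; once they are in hand, the numerical totals in Conjecture~\ref{conj1} follow by summation.
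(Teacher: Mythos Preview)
Your overall strategy matches the paper's, but two of your concrete claims are wrong and would derail the count.

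First, the assertion that harmonic quartics ($\jmath=1728$) always have non-square apolar invariant is false. For $f=XY(Y-X)(Y+X)$ one computes $I(f)=1/12$, so whether $I(f)$ is a square depends on whether $3$ is a square in $\bF_q$, i.e.\ on $q\bmod 12$. The paper (Proposition~\ref{J+prop}) shows that among the five orbits of quartics with $\jmath=1728$ (one in each of $\mF_4,\mF_2,\mF_2'$ and two in $\mF_4'$), the three in $\mF_4\cup\mF_4'$ have square $I$ iff $q\equiv\pm1\bmod 12$, and the two in $\mF_2\cup\mF_2'$ have square $I$ iff $q\equiv\pm5\bmod 12$. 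So harmonic orbits \emph{do} contribute line orbits. The reason no $|G|/8$ orbit appears is not that harmonic forms are excluded, but rather (Lemma~\ref{lem_sgn} and Proposition~\ref{selfdualj1728}) that the odd permutations in the $D_4$ stabilizer interchange $L$ and $L^\perp$, so the line stabilizer is only the index-$2$ even subgroup, giving size $|G|/4$.

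Second, your expectation that ``the polar-duality doubling is trivial'' is exactly backwards. The paper shows (Lemma~\ref{lem_sgn}) that $g\in\mathrm{Stab}(f)$ interchanges $L$ and $L^\perp$ if and only if $\mathrm{sgn}(\sigma_g)=-1$. For $\jmath(f)\notin\{0,1728\}$ the rational stabilizer sits inside $\langle\sigma_1,\sigma_2\rangle\subset A_4$, so \emph{no} element of the stabilizer swaps $L$ and $L^\perp$, and every such quartic orbit lifts to \emph{two} distinct line orbits $\fO,\fO^\perp$. This doubling is what produces the factors of $2$ in the first three rows of Table~\ref{table:2}. The self-dual cases occur only at $\jmath\in\{0,1728\}$ (with one exception at $\jmath=1728$ when $q\equiv\pm1\bmod 12$). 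Your criterion ``the geometric stabilizer contains an element interchanging the two preimages'' is correct in principle, but you need the rational stabilizer, and the sign computation shows it generically does \emph{not} contain such an element.

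A smaller point: the rational stabilizer is not ``the subgroup of $A_4$ fixed by Frobenius'' but the centralizer $Z_{\mathrm{Stab}_F(f)}(\sigma_\phi)$ of the Frobenius permutation $\sigma_\phi\in S_4$ inside the geometric stabilizer (Proposition~\ref{stabilizer_prop}); and the geometric stabilizer is $D_4$, not a subgroup of $A_4$, in the harmonic case. Also, the paper does not treat characteristics $2$ and $3$; those rows of the table are handled elsewhere (char~$2$ by Ceria--Pavese), so your degeneration discussion for those characteristics is not part of the present proof.
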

The first column in the above table is the value of $q\!\!\mod 6$, and the last column is the total number of $G$-orbits. The columns in between give the number of orbits of various sizes listed in the first row.  Our main result is: 
\begin{theorem*} \label{main} The above conjecture is true for char$(\bF_q) \neq 2, 3$.
\end{theorem*}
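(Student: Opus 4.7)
Since the hypothesis $\mathrm{char}(\mathbb{F}_q) \neq 2, 3$ restricts attention to $q \equiv \pm 1 \pmod 6$, and the non-generic line orbits are already tabulated in \cite{BPS,DMP1}, it suffices to enumerate the $G$-orbits on generic lines and match them with the last two rows of Conjecture~\ref{conj1}. The plan is to transfer the problem to binary quartics via the $G$-equivariant assignment $L \mapsto f_L \in \mathcal{H}$ obtained by intersecting the Klein secant through $[L]$ and $[L^\sigma]$ with $\mathcal{H}$, where $\sigma$ is the symplectic polarity determined by $C$. The lines $L$ and $L^\sigma$ always have the same image, and by the result announced in the abstract, a quartic $f$ arises as some $f_L$ precisely when its apolar invariant $I(f)$ is a square in $\mathbb{F}_q$.

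The core step is classifying binary quartic forms over $\mathbb{F}_q$ into $PGL_2(q)$-orbits, which is an open problem in its own right. Over $\overline{\mathbb{F}_q}$, forms with four distinct roots are classified by the $j$-invariant, with stabilizer $V_4 = (\mathbb{Z}/2)^2$ generically, $D_4$ of order $8$ at $j = 1728$, and $A_4$ of order $12$ at $j = 0$. Over $\mathbb{F}_q$ each $\overline{\mathbb{F}_q}$-orbit splits into several $\mathbb{F}_q$-orbits parametrized by $H^1(\widehat{\mathbb{Z}}, \mathrm{Stab})$, which I compute by analyzing the Frobenius action on $\mathrm{Stab}$ in characteristic coprime to $6$. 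The degenerate strata (double, triple, quadruple, or double-double roots) are enumerated separately; the forms with quadruple roots correspond to tangent lines of $C$ via the rational normal quartic in $\mathcal{H}$, and many degenerate strata map to the non-generic bucket. For each resulting orbit I record both the stabilizer type and whether $I(f)$ is a square.

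For each quartic orbit with $I(f)$ a nonzero square, I identify the line orbits in its fiber: the line stabilizer $G_L$ is either $G_f$ or an index-$2$ subgroup of $G_f$ according to whether no element of $G_f$ interchanges $L$ and $L^\sigma$. This determines both the number of line orbits over $[f]$ (one or two) and the orbit size $|G|/|G_L|$. Collecting contributions: the $|G|/12$ column arises from the $A_4$-twist at $j = 0$ and requires primitive cube roots of unity in $\mathbb{F}_q$ (hence only $q \equiv 1 \pmod 6$); the $|G|/3$ column arises from the $\mathbb{Z}/3$-twists at $j = 0$ (again requiring cube roots); the $|G|/4$ column from generic $V_4$ twists; the $|G|/2$ column from quartics whose $V_4$-stabilizer is broken by the swap $L \leftrightarrow L^\sigma$ to an order-$2$ subgroup; and the $|G|$ column from orbits with no rational stabilizer at all. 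Adding these to the previously known non-generic orbits reproduces exactly the counts $2q - 2$ and $2q - 4$ predicted for $q \equiv 1$ and $q \equiv 5 \pmod 6$ respectively.

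The main obstacle is executing the second phase cleanly: I must simultaneously track (i) the Frobenius twist of the stabilizer, (ii) whether $I(f)$ is a square in $\mathbb{F}_q$, and (iii) whether the swap $L \leftrightarrow L^\sigma$ lies in $G_f$, and ensure that this triple of conditions is matched consistently across each residue class. The special $j$-values $0$ and $1728$ are the most delicate, because several twists coexist there and the squareness of $I$ discriminates between them in a way that must match the number-theoretic conditions on $q \bmod 6$ exactly; this is the crux that drives the final tabulation.
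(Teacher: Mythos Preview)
Your overall strategy coincides with the paper's: reduce to $G$-orbits of binary quartics via the $2$-to-$1$ map $\{L,L^\perp\}\mapsto f_L$, classify quartics with $\Delta(f)\neq 0$, restrict to those with $I(f)$ a square, and then analyse the fibre $\{L,L^\perp\}$ over each quartic orbit. The paper carries this out via ``restricted orderings'' and centralizers in $S_4$ rather than $H^1$, but these are equivalent bookkeeping devices.

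However, your column-by-column attribution contains a genuine error that would derail the count. You claim the $|G|/2$ column comes from ``quartics whose $V_4$-stabilizer is broken by the swap $L\leftrightarrow L^\sigma$ to an order-$2$ subgroup.'' This is wrong on two counts. First, for $\jmath(f)\notin\{0,1728\}$ the geometric stabilizer $V_4$ lies entirely in $A_4$, so \emph{no} element of $\mathrm{Stab}(f)$ ever swaps $L$ and $L^\perp$; the fibre over every such orbit consists of two distinct line orbits, each of the \emph{same} size as the quartic orbit. Second, the dominant contribution to the $|G|/2$ column (the $q-1$ orbits over $J_2^+$) comes from quartics whose \emph{rational} stabilizer is already $\bZ/2\bZ$ because Frobenius has broken the geometric $V_4$---these are the forms with factorization type $(1,1,2)$ or irreducible of degree $4$. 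The swap mechanism you describe operates only at $\jmath=1728$, where the geometric stabilizer is $D_4\not\subset A_4$; there it accounts for at most two orbits, not $q-1$. The missing key lemma is that an element $g\in\mathrm{Stab}(f)$ interchanges $L$ and $L^\perp$ if and only if the permutation $\sigma_g\in S_4$ it induces on the roots is \emph{odd}; since the generic geometric stabilizer is $V_4\subset A_4$, swapping is a purely $\jmath=1728$ phenomenon. Without this, your accounting of which factorization types feed which orbit-size columns will not close, and the final totals will not match the conjecture.
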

The detailed result is given in Theorem \ref{thm_main} below. Our approach to the problem is to consider the ways in which a line of $PG(3,q)$ can meet the tangent lines of the twisted cubic $C$. We study this incidence in the Klein representation of the lines of $PG(3,q)$ as the Klein quadric $\mQ$ in $PG(5,q)$.  The analysis of this incidence proceeds differently in characteristic $2, 3$ and will be treated in a future work. In this work, we restrict to the case when char$(\bF_q) \neq 2,3$.  Let $\mathcal{T}$ be the quartic surface in $PG(3,q)$ defined by the equation \[(X_0X_3-X_1X_2)^2-4(X_0X_2-X_1^2)(X_1X_3-X_2^2)=0.\]
This surface parametrizes the union of all tangent lines to the twisted cubic $C$ in $PG(3,q)$ (see \cite[Lemma 21.1.10]{Hirschfeld3}). Since $\mathcal{T}$ has degree $4$, any line $L$ of $PG(3,q)$ not lying on $\mathcal{T}$ will intersect $\mathcal T$ in $4$ points counting multiplicities and points over extension fields. Suppose $L$ meets tangent lines of $C$ at the points 
$(s_i^3,s_i^2t_i,s_it_i^2,t_i^3), i=1,\dots ,4$. We can then associate to the line  $L$ a binary quartic form $f$ whose zeroes are points $(s_i,t_i), i=1,\dots ,4$ (see Lemma \ref{lem_pi} and the discussion immediately preceding  it for more details). As we explain below,  there are two steps in our approach to Problem \ref{mainproblem}. The first step is to classify binary quartic forms over $\bF_q$ into $G$-orbits. The second step relates $G$-orbits of lines of $PG(3,q)$ to $G$-orbits of a subset of binary quartic forms. 

\subsection{Approaches to Problem \ref{mainproblem} } Before explaining our approach, we  briefly discuss the approach to Problem \ref{mainproblem} in the recent works \cite{Ceria_Pavese}, \cite{BPS}, \cite{GL}, and \cite{DMP1}-\cite{DMP2}. In \cite{Ceria_Pavese} Ceria-Pavese prove  Conjecture \ref{conj1} for the case when   $q$ is even. 
\begin{theorem*} [Ceria-Pavese] \cite[Theorem 4.8]{Ceria_Pavese} 
Conjecture  \ref{conj1}  is true for $q$ even.
\end{theorem*}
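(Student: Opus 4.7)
The plan is to handle the even-characteristic case by a route distinct from the binary-quartic framework of the rest of the paper, because in characteristic $2$ the defining polynomial of the tangent-line surface degenerates: $(X_0X_3-X_1X_2)^2-4(X_0X_2-X_1^2)(X_1X_3-X_2^2) = (X_0X_3-X_1X_2)^2$ since $-4=0$. Thus $\mathcal{T}=2Q$, twice the smooth quadric $Q\colon X_0X_3=X_1X_2$ containing the twisted cubic $C$. As a consequence, a generic line $L$ meets the tangent lines of $C$ in only two geometric points (not four), so the natural invariant attached to $L$ is a binary \emph{quadratic} form $f_L$ rather than a quartic. The first step is to verify this: intersecting $L$ with the pencil of tangents, parametrized in Plücker coordinates by $(1,0,\tau^2,\tau^2,0,\tau^4)$, yields a polynomial $p_0\tau^4+(p_2+p_3)\tau^2+p_5$ which in characteristic $2$ is a square in $\tau^2$ of a quadratic with roots $\tau_1,\tau_2$. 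These two parameters form the unordered pair on $PG(1,q)$ whose class up to $G$ is the first invariant of $L$.

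Second, I would classify binary quadratic forms over $\bF_q$ up to $G=PGL_2(q)$: the orbits are (i) a double root, (ii) a pair of distinct $\bF_q$-rational roots, (iii) a conjugate pair in $\bF_{q^2}\setminus\bF_q$; all pairs in each class are $G$-equivalent by the triple-transitivity of $G$ on $PG(1,q)$. Then, for each orbit of $f_L$, I would parametrize the fiber of $L\mapsto f_L$ by a Schubert-cell computation in the Klein quadric: fixing $p_0,p_2+p_3,p_5$ leaves $p_1,p_4$ and (say) $p_2-p_3$ free modulo the Plücker relation, giving a two-dimensional fiber. A secondary rational invariant $\mu(L)$ on this fiber, taken to be $G$-equivariant under the stabilizer of $f_L$ (a dihedral subgroup of order $2(q\mp 1)$ in the split/non-split case), then cuts the fiber into $G$-orbits; counting the values of $\mu$ in each case gives the bulk of the $2q-4$ orbits of size $|G|/2$.

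Third, I would locate the exceptional orbits. The trivial-stabilizer orbit of size $|G|$ and the stabilizer-of-order-$3$ orbits of size $|G|/3$ correspond to fixed lines of involutions and order-$3$ elements of $G$, respectively. An order-$3$ element $g\in G$ acts on $\bF_q^4=\mathrm{Sym}^3\bF_q^2$ with eigenvalues $1,\omega^2,\omega,1$; its fixed lines in $PG(3,q)$ come from $\bF_q$-rational decomposable elements in the weight-$1$ eigenspace of $g$ on $\wedge^2\bF_q^4$. When $q\equiv 4\pmod 6$ the order-$3$ element lies in the split torus and $\omega\in\bF_q$, so the decomposition is $\bF_q$-rational and a direct count produces exactly $3$ generic lines (up to $G$-conjugacy) fixed by such a $g$; when $q\equiv 2\pmod 6$ the element lies in the non-split torus, $\omega\in\bF_{q^2}\setminus\bF_q$, the eigenlines are Galois-swapped, and no $\bF_q$-rational generic line is fixed. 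This produces the $0$ vs.~$3$ split in the two rows of Conjecture \ref{conj1}. Similarly, involutions ($g$ with Jordan block) yield the $2q-4$ orbits of size $|G|/2$ and, only for $q\equiv 2\pmod 6$, a single orbit of size $|G|$ arising from the unique invariant line external to both eigenflags. A final consistency check is that the total $\sum s\cdot(\text{\# orbits of size }s)$ equals the number of generic lines, namely $(q^2+1)(q^2+q+1)$ minus the tabulated non-generic count, which in both rows equals $q(q-1)^2(q+1)=|G|(q-1)$.

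The main obstacle is Step 3: constructing a genuinely $G$-equivariant secondary invariant $\mu$ that is defined on \emph{all} fibers uniformly (and not only on a generic open subset), and then carrying out the delicate order-$3$ stabilizer analysis. The subtlety is that in characteristic $2$ the induced action of the dihedral stabilizer of $f_L$ on the fiber involves Frobenius-twisted formulas (since $C$ sits in $Q\cong PG(1,q)\times PG(1,q)$ as the graph of $x\mapsto x^2$), so the apparent ``linear'' action on $\mu$ hides an Artin--Schreier-type quotient. Identifying the correct normalization of $\mu$ so that $\mu$-values biject with fiber orbits, and proving that exactly three order-$3$-fixed $\bF_q$-rational generic lines appear when $q\equiv 4\pmod 6$, are the technical cores of the argument.
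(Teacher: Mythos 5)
First, note that the paper does not prove this statement at all: it is quoted verbatim from Ceria--Pavese \cite[Theorem 4.8]{Ceria_Pavese}, and the present paper explicitly restricts its own arguments to char$(\bF_q)\neq 2,3$. So your attempt can only be judged on its own terms. Your opening move is sound and matches the remark the paper itself makes about even $q$: in characteristic $2$ the tangential variety degenerates to a doubled hyperbolic quadric, the incidence polynomial $p_{01}\tau^4+(p_{03}+p_{12})\tau^2+p_{23}$ becomes the square of a quadratic in $\tau$, and the natural invariant of a generic line is a binary quadratic form with three $PGL_2(q)$-classes. The order-$3$ fixed-line analysis is also salvageable (the fixed lines of a split order-$3$ element do form two pencils through the points $B_1$ and $B_2$, each containing $q-1$ generic lines, and a normalizer count then forces exactly $3$ orbits of size $|G|/3$ when $q\equiv 1\bmod 3$) --- although as written you assert rather than perform the count, and the count presupposes that no generic orbit has stabilizer properly containing $\bZ/3\bZ$, which is part of what is being proved.

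The genuine gaps are in the part that carries almost all of the content, namely the $2q-4$ orbits of size $|G|/2$ and the single orbit of size $|G|$. (i) The ``secondary invariant $\mu$'' that is supposed to separate the roughly $q$ orbits inside each two-dimensional fiber is never constructed; you yourself flag this as the main obstacle, so the argument is an outline, not a proof. (A small symptom of the missing computation: you list $p_{02}$, $p_{13}$ and ``$p_{03}-p_{12}$'' as the free fiber coordinates, but in characteristic $2$ one has $p_{03}-p_{12}=p_{03}+p_{12}$, which is already fixed on the fiber; the correct free coordinates are $p_{02},p_{13}$ and one of $p_{03},p_{12}$, subject to the Pl\"ucker relation.) (ii) The account of the size-$|G|$ orbit is internally inconsistent: an orbit of size $|G|$ has trivial stabilizer, so it cannot ``arise from the unique invariant line'' of an involution --- a line invariant under an involution lies in an orbit of size at most $|G|/2$. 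The correct way this orbit appears is by subtraction: once one has shown there are exactly $2q-4$ orbits of size $|G|/2$ and $0$ (resp.\ $3$) orbits of size $|G|/3$, the remaining $|G|$ (resp.\ $0$) generic lines must form one trivial-stabilizer orbit (resp.\ nothing). But that subtraction is only available after step (i) is actually carried out, so as it stands the proposal does not establish the theorem.
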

They  not only treat the twisted cubic, but  also obtain the orbits of points, lines, and planes under the group of linear symmetries of a general arc of length $(q+1)$ in $PG(3,q)$. They also solve Problem \ref{prob1} (mentioned below) for the twisted cubic. In characteristic $2$, the analysis of the incidence of the lines of $PG(3,q)$ with the tangent lines of $C$ is simpler. This is because while the tangential variety $\mathcal T$  of $C$ mentioned above is a hypersurface of degree $4$ when $q$ is odd, in the case  when $q$ is even, $\mathcal T$  is a degree $2$ hypersurface given by a hyperbolic quadric. The tangent lines to $C$ form a regulus on this quadric, and the image of this regulus as well as its opposite regulus are conics in the Klein quadric $\mQ$. In the case of odd $q$, the tangent lines of $C$ form a rational normal curve of degree $4$ in $\mQ$.\\

 The problem of classifying binary quartic forms over $\bF_q$ into $G$-orbits can be restated in terms of  places and divisors of   the projective line $\bP^1$ over $\bF_q$: the problem is  to classify simple divisors of degree $4$ on $\bP^1$ into $PGL_2(q)$ orbits. This problem also arises in  the work \cite[Remark 8.2]{BPS} of Blokhuis-Pellikaan-Sz\H{o}nyi. We briefly discuss  the approach of these authors to Problem \ref{mainproblem}: viewing the dual space of $PG(3,q)$ as the space of binary cubic forms, they consider the pencil of binary cubic forms annihilating a given line $L$ of $PG(3,q)$. If $f(X,Y)$ and $g(X,Y)$ generate this pencil, then the rational function $\varphi= f/g$ is morphism  $\bP^1 \to \bP^1$  in the case when $L$ is a generic line. The `different divisor' of  $\varphi$ is the simple divisor of degree $4$ on $\bP^1$.\\

A problem closely related to Problem \ref{mainproblem} is as follows. First, we recall that the points of $PG(3,q)$ decompose as $\amalg_{i=1}^5 O_i$ into five $G$-orbits (see Lemma \ref{cubic}).
\begin{prob} \label{prob1} For representatives $L$ of each of the line orbits, determine the proportion of the set $\mathcal S$ of the $(q+1)$ points of $L$, lying in each of the $5$ point orbits:
\[ \frac{|\mathcal S \cap O_i|}{q+1}, \quad i=1\dots 5.\]
\end{prob}
In \cite{GL} Lavrauw-G\"{u}nay solve this problem for each  of the $10$ orbits of nongeneric lines when char$(\bF_q) \neq 2, 3$. As mentioned above, they also present the conjecture about the number of orbits of generic lines. The authors treat the lines of $PG(3,q)$ as pencils of binary cubic forms. We explain this formulation and its advantages: when char$(\bF_q) \neq 3$, we can 
identify points of $PG(3,q)$ with the projective  space of binary cubic forms  $y_0 Y^3- 3 y_1 Y^2X +3 y_2 YX^2-y_3 X^3$. The curve $C$  consists of forms $(Y s-X t)^3$ and the osculating plane to $C$ at this point consists of cubic forms divisible by $(Ys-Xt)$. For $g \in G$ represented by $g=\bbsm a & b\\ c & d \besm$ the action of $g$ is given by  $(g \cdot \varphi)(X,Y)=\varphi(dX-bY,aY-cX)$. Now, Problem \ref{mainproblem} is equivalent to classifying pencils of binary cubic forms into $G$-orbits.\\

The authors Davydov, Marcugini and Pambianco have a series of works on Problems \ref{mainproblem} and \ref{prob1}, for example, \cite{DMP3}  and the references [14]-[21] therein. They have obtained some of the orbits of the class of generic lines, and they have also solved Problem \ref{prob1} for the non-generic lines and some of the generic lines. As mentioned above, the detailed Conjecture \ref{conj1} about the description of orbits of generic lines is due to them.

\subsection{Outline of our approach to Problem \ref{mainproblem}} \hfill \\
We represent the lines of $PG(3,q)$ by the points of the Klein quadric $\mQ$ in $PG(5,q)$. The tangent lines to the twisted cubic $C$ form a degree $4$ rational normal curve $C_4$ in a hyperplane $\mH$ of $PG(5,q)$. The $G$ action on $C$ induces an action on $C_4$ which extends to an action of $G$ on $\mH$. We will show that there is a $G$-equivariant isomorphism between $\mH$ and the projective space of binary quartic forms
\[ f(X,Y)=z_0 Y^4-4 z_1 Y^3X+6 z_2 Y^2X^2 -4 z_3YX^3 + z_4X^4, \]
with $G$ acting  as $(g\cdot f)(X,Y) = f(dX-bY, aY-cX)$ for $g = \bbsm a&b \\ c&d \besm$.
Identifying $\mH$ with the space of binary quartic forms, the points of the curve $C_4$ correspond to quartic forms $(Xt-Ys)^4$ with a single repeated root. It will be necessary to introduce some more notation. The quadratic form $I(f)=(z_0z_4-4z_1z_3 + 3z_2^2)/3$ in the coefficients of $f$  is known as the apolar invariant of $f$. The cubic form $J(f)=  \det \bbsm z_0 & z_1 & z_2 \\ z_1 & z_2 & z_3 \\z_2 & z_3 & z_4 \besm$ in the coefficients of $f$ is known as the catalecticant invariant of $f$. The quantities $I(f)$ and $J(f)$  are fundamental   $GL_2(\bF_q)$-invariants of binary quartic forms.
The quantity $\jmath(f)$ defined by $1-1728/\jmath(f) =J^2(f)/I^3(f)$ is known as the absolute invariant or the $\jmath$-invariant of $f$. The discriminant of $f$ is the degree $6$ form in the coefficients of $f$ given by $I^3(f)-J^2(f)$. When char$(\bF_q) \neq 3$, the
map that  sends a point $P$ of $C$ to the osculating plane to $C$ at $P$ induces a $G$-invariant symplectic polarity on $PG(3,q)$. For a line $L$ of $PG(3,q)$, let $L^{\perp}$ denote the polar dual of $L$. In this way, the lines of $PG(3,q)$ always come in pairs $\{L, L^\perp\}$ (with $L^\perp=L$ possibly). We will show that:\\

\emph{There is a $G$-equivariant correspondence between the set of pairs of lines $\{L, L^{\perp}\}$ of $PG(3,q)$ and the set of  binary quartic forms over $\bF_q$ whose apolar-invariant is a square. }\\

This correspondence arises as follows.  We will show that there is an involutive linear map $\ast$ on $PG(5,q)$, such that at the projective level, the restriction of $\ast$ to $\mQ$ is the polar duality involution $L \mapsto L^\perp$. The fixed point set of $\ast$ consists of $\mH \cup \mP_0$ where $\mH$ is the hyperplane of $PG(5,q)$ mentioned above, and the point $\mP_0$ does not lie on $\mH$ or $\mQ$ (see Section \S \ref{chFneq23} for the geometric details).  The  projection from $\mP_0$ to $\mH$ when restricted to the Klein quadric $\mQ$ realizes $\mQ$ as a $G$-equivariant $2$-sheeted branched covering of the subset of  binary quartic forms whose apolar invariant  is a square in $\bF_q$.  The branches correspond to $\pm \sqrt{I}$ and represent a pair $\{L, L^{\perp}\}$, and the $\ast$ operator interchanges the sheets.  A line $L$ of $PG(3,q)$  lying over a quartic form $f(X,Y)$  intersects  the tangent line to $C$ at $(s^3,s^2t,st^2,t^3)$  if and only if $f(s,t)=0$, that is, $(Xt-Ys)$ is a factor of $f(X,Y)$.   This setup for studying the lines of $\bP^3$ including the projection of $\mQ$ from $\mP_0$ to $\mH$, the ramification locus $\mH \cap \mQ$ are already present in   the early paper \emph{`A note on the tangents of a twisted cubic'} \cite{Atiyah} of M.F. Atiyah, see \S \ref{Atiyah_note} for details. In fact, we may view  \S \ref{chFneq23} of this work as a continuation of Atiyah's note. \\

The two steps in our method of solving Problem \ref{mainproblem} are:
\begin{enumerate}  
\item [Step 1:] Decompose the space of binary quartic forms with no repeated roots into $G$-orbits.
\item [Step 2:] Determine the orbits in Step 1 of  quartic forms whose apolar invariant is a square, and use the covering map above to determine the $G$-orbits on $\mQ$.
\end{enumerate}
\subsection{Statement of results}~\\
For $i \in {1,2,4}$, we define
\[ J_i = \{ r \in \bF_q\setminus\{0,1728\} :  Z^3-4 Z^2 + \tfrac{256}{27}(1-\tfrac{1728}{r}) \text{ has $(i-1)$ roots over $\bF_q$}\}.\]
Here, $J_4 \cup J_2 \cup J_1= \bF_q\setminus\{0,1728\}$ is a partition (see Lemma \ref{Ji_lem}). The sizes of $J_4, J_2$ and $J_1$ are  $\tfrac{q-6-\mu}{6}, \tfrac{q-2+\mu}{2}$ and $\tfrac{q-\mu}{3}$ respectively, where  $\mu \in \{ \pm 1\}$ with  $q \equiv \mu \mod 3$.  (see Lemma \ref{Ji_def}).

\begin{thm} \label{result2} The set of $(q^4-q^2)$  binary quartic forms over $\bF_q$ with non-zero discriminant, decomposes into the following $G$-orbits parametrized by the $\jmath$-invariant.  The number of orbits of size indicated by the column index, and of $\jmath$-invariant indicated by the row index, is given by the corresponding entry of the table below. The total number of orbits is $2q+2+\mu$ where $q \equiv \mu \mod 3$ with $\mu \in \{\pm 1\}$.
\begin{table}[h!] 
\begin{tabular}{c| *{6}{c}}
   & $|G|$ & $\tfrac{|G|}{2}$ & $\tfrac{|G|}{3}$ & $\tfrac{|G|}{4}$ & $\tfrac{|G|}{12}$   &  $\tfrac{|G|}{8}$  \\
&&&&&&\\  \hline \\
$\jmath(f)\in J_4$ & $0$ & $0$ & $0$ & $4$ & $0$ &  $0$  \\
$\jmath(f)\in J_2$ & $0$ & $2$ & $0$ & $0$ & $0$ &  $0$  \\
$\jmath(f)\in J_1$ & $1$ & $0$ & $0$ & $0$ & $0$ &  $0$  \\
$\jmath(f)=1728$ & $0$ & $0$ & $0$ & $3$ & $0$ &  $2$  \\
$\jmath(f)=0$  & $0$ & \small{$(1-\mu)$} &  \small{$(1+\mu)$} & $\tfrac{1+\mu}{2}$ &  $\tfrac{1+\mu}{2}$  &  $0$
  \\ [1ex]
\hline  \\ [1ex]
\end{tabular}
\caption{Table of $G$-orbits of quartic forms with nonzero discriminant}
 \label{table:1}
\end{table}
\end{thm}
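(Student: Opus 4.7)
My plan is classical invariant theory combined with Galois descent. Over the algebraic closure $\bar F=\overline{\bF_q}$, a binary quartic form with nonzero discriminant corresponds (via its zero divisor) to an unordered $4$-subset of $\bP^1(\bar F)$, and two such forms are $PGL_2(\bar F)$-equivalent iff they have the same cross-ratio class; since the $\jmath$-invariant bijects $S_3$-orbits of cross-ratios with $\bar F$, the $PGL_2(\bar F)$-orbits are indexed by $\jmath\in\bar F$, with stabilizer $V_4\cong(\bZ/2)^2$ generically, the dihedral $D_4$ of order $8$ at the harmonic value ($\jmath=1728$), and the alternating $A_4$ of order $12$ at the equianharmonic value ($\jmath=0$). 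Since the Brauer group of $\bF_q$ vanishes we have $H^1(\bF_q,PGL_2)=0$, so the $G(\bF_q)$-orbits inside a fixed $\bar F$-orbit are in bijection with $H^1(\widehat{\bZ},H)$, where $H$ is the stabilizer of a chosen $\bF_q$-rational base point, equipped with its induced Galois action. For finite abelian $H$ this reduces to the coinvariants $H/(\phi-1)H$, with $\phi$ the Frobenius; the size of each resulting orbit is $|G|/|H^{\phi}|$.

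The next step is to encode the Frobenius cycle type on the four roots in terms of the cubic in the statement. The claim to verify is that $Z^3-4Z^2+\tfrac{256}{27}(1-1728/r)$ is, up to an affine substitution in $Z$, the resolvent cubic of a normal-form quartic $f$ with $\jmath(f)=r$; its roots are the three $V_4$-invariant pair-sums $\alpha_i\alpha_j+\alpha_k\alpha_l$, which are permuted by $S_4/V_4\cong S_3$. Consequently the splitting type of the stated cubic over $\bF_q$ records the image of Frobenius in $S_3$: three roots ($r\in J_4$) iff Frobenius lies in $V_4$, exactly one root ($r\in J_2$) iff Frobenius is a transposition or a $4$-cycle, and no root ($r\in J_1$) iff Frobenius is a $3$-cycle.

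With this correspondence established, the generic rows reduce to short $H^1$ computations. For $r\in J_4$, taking a base with four $\bF_q$-rational roots makes the Galois action on $V_4$ trivial, so $H^1(\widehat\bZ,V_4)=V_4$ has order $4$, yielding $4$ orbits each of size $|G|/4$. For $r\in J_2$, Frobenius fixes one nontrivial element $\sigma\in V_4$ and swaps the other two; the coinvariant formula gives $|H^1|=2$ and $2$ orbits of size $|G|/2$. For $r\in J_1$, Frobenius cycles the three nontrivial elements of $V_4$, so $(\phi-1)V_4=V_4$ and $H^1=0$: a single orbit of size $|G|$.

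The principal obstacle is the analysis of the two special rows $\jmath=1728$ and $\jmath=0$, where the stabilizer enlarges to the nonabelian groups $D_4$ and $A_4$ respectively. For $\jmath=1728$ the extra $4$-fold rotation of the harmonic $4$-tuple is $\bF_q$-rational iff $q\equiv 1\pmod 4$; one enumerates $H^1(\widehat\bZ,D_4)$ (now only a pointed set) for each admissible Frobenius type, and then checks that the several contributions assemble to $3$ orbits of size $|G|/4$ and $2$ of size $|G|/8$, independently of $q\bmod 4$. For $\jmath=0$ the stabilizer $A_4$ is nonabelian, and the individual $3$-cycle automorphisms of the equianharmonic $4$-tuple are $\bF_q$-rational iff $\bF_q$ contains a primitive cube root of unity --- equivalently iff $q\equiv 1\pmod 3$, i.e.\ $\mu=+1$. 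Listing the Frobenius conjugacy classes in $A_4$ and computing the pointed set $H^1(\widehat\bZ,A_4)$ in each case yields the two different $\jmath=0$ entries corresponding to $\mu=\pm 1$; this is where all the arithmetic content of the parameter $\mu$ enters.
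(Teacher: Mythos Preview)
Your Galois-cohomological approach is correct and genuinely different from the paper's proof. The paper proceeds by first partitioning forms with $\Delta(f)\neq 0$ into five subsets $\mF_4,\mF_4',\mF_2,\mF_2',\mF_1$ according to their factorization type over $\bF_q$, then setting up explicit bijections (Proposition~\ref{Lambda_prop}) between $G$-orbits in each $\mF_i$ (resp.\ $\mF_i'$) and orbits of certain subgroups $H_i\subset G_*$ on explicitly described subsets $\tilde\mN_i\subset\overline{\bF_q}$ of cross-ratios, computing stabilizers case by case (Proposition~\ref{stabilizer_prop}), and finally counting. Your route packages all of this into the single statement that the $\bF_q$-orbits inside a fixed geometric orbit are the twisted conjugacy classes $H^1(\widehat\bZ,H)$ of the geometric stabilizer; this is cleaner conceptually and makes the uniformity across factorization types transparent (the four orbits for $\jmath\in J_4$, two for $J_2$, one for $J_1$ are simply $|V_4/(\phi-1)V_4|$ for the three $S_3$-conjugacy classes of $\phi$). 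On the other hand, the paper's explicit approach produces concrete orbit representatives and identifies which orbit lies in which $\mF_i,\mF_i'$, information needed later for Table~\ref{table:3} and the proof of Theorem~\ref{thm_main}.

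One point in your outline needs correction. For $\jmath=1728$ you write that the order-$4$ rotation is $\bF_q$-rational iff $q\equiv 1\pmod 4$, and then speak of enumerating $H^1$ ``for each admissible Frobenius type''. This is unnecessary and slightly misleading: the harmonic $4$-tuple $\{\infty,0,1,-1\}$ (i.e.\ the form $XY(Y-X)(Y+X)$) is always $\bF_q$-rational, and every element of its $D_4$-stabilizer has coefficients in $\bF_q$ (for instance the $4$-cycle is $t\mapsto (t-1)/(t+1)$). Hence the Galois action on $D_4$ is trivial for this base point, and $H^1(\widehat\bZ,D_4)$ is simply the set of conjugacy classes of $D_4$; there are five of these, with centralizers of orders $8,8,4,4,4$, giving exactly the row you want. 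No case split on $q\bmod 4$ is needed. By contrast, your handling of $\jmath=0$ is on target: a form in $\mF_4$ with $\jmath=0$ exists only when $\mu=1$, and when $\mu=-1$ one must take a base point such as $X(Y^3-X^3)\in\mF_2$, for which Frobenius acts on $A_4$ by the outer automorphism (conjugation by a transposition), yielding two $\phi$-conjugacy classes both with twisted centralizer of order $2$.
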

A list of representatives of these $(2q+2+\mu)$-orbits together with the isomorphism class of the stabilizer subgroup of each orbit  is given in Table \ref{table:3}.\\

We can now state our main result in more detail. For a line orbit $\fO$ represented by $L$, we use $\jmath(\fO)$ to denote $\jmath(f)$ for the quartic form $f$ associated to $
L, L^\perp$.    The last row of the  table below gives  the total number of orbits and their sizes, which is exactly as given in Conjecture \ref{conj1}.\par
We first define 
\[ J_i^+= \{r \in J_i :  1-\tfrac{1728}{r} \text{ is a quadratic residue in $\bF_q$}\}.\]
The sizes of the sets $J_i^+$ are given in Proposition \ref{J+prop}.
\begin{thm} \label{thm_main}
The number of $G$-orbits $\fO$ of generic lines of  size indicated by the column index and 
$\jmath$-invariant indicated by the row index is given in the table below.
\begin{table}[h!]
\noindent \begin{tabular}{c| *{5}{c}}
$\jmath(\fO)$   & $|G|$ & $\tfrac{|G|}{2}$ & $\tfrac{|G|}{3}$ & $\tfrac{|G|}{4}$ & $\tfrac{|G|}{12}$     \\
&&&&&\\  \hline \\
$\jmath\in J_4^+$ & $0$ & $0$ & $0$ & $8$ & $0$   \\
$\jmath\in J_2^+$ & $0$ & $4$ & $0$ & $0$ & $0$   \\
$\jmath\in J_1^+$ & $2$ & $0$ & $0$ & $0$ & $0$   \\
$\jmath=1728$ & $0$ &  \small{$\begin{cases}  0 &\text{ if $q \equiv \pm 1 \mod 12$} \\
2 &\text{ if $q \equiv \pm 5 \mod 12$} \end{cases}$}  & $0$ & 
 \small{$\begin{cases} 4 &\text{ if $q \equiv \pm 1 \mod 12$} \\
0 &\text{ if $q \equiv \pm 5 \mod 12$} \end{cases}$} & $0$   \\
$\jmath=0$  & $0$ &  \small{$(1-\mu)$} &  \small{$(1+\mu)$} & $\tfrac{1+\mu}{2}$ &  $\tfrac{1+\mu}{2}$  
  \\ [1ex]
\hline
\\
\emph{total} & $\tfrac{q-\mu}{3}$ &  \small{$q-1$} &   \small{$(1+\mu)$} & $\tfrac{2q-10 -(1+\mu)/2}{3}$ &  $\tfrac{1+\mu}{2}$. \\ [1ex]
\end{tabular}
\caption{Table of $G$-orbits of generic lines}
\label{table:2}
\end{table}
 All the orbits with $\jmath(\fO)  \in J_i^+$ for $i=1,2,4$ are polar non-self-dual. All  the orbits with $\jmath(\fO)   \in \{0, 1728\}$ are polar self-dual with one exception: if $q \equiv \pm 1 \mod 12$ then, of the $4$ orbits of size $|G|/4$ having  $\jmath(\fO)=1728$, two of them are polar-self-dual and the remaining two are non-self-dual.
\end{thm}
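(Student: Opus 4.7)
The plan is to push the quartic-form classification of Theorem \ref{result2} down to a classification of generic lines via the $G$-equivariant two-sheeted branched covering $\pi: \mQ \to \mH$ constructed in Section \ref{chFneq23}. A generic line $L$ corresponds to a quartic form $f = \pi(L)$ with non-zero discriminant, and $L$ lies in the image of $\pi$ iff $I(f)$ is a square in $\bF_q$; moreover $L = L^\perp$ iff $L$ lies on the ramification locus $\mQ \cap \mH$, equivalently $I(f) = 0$, equivalently $\jmath(f) = 0$. Thus Theorem \ref{thm_main} reduces to two sub-tasks for each of the $(2q+2+\mu)$ quartic orbits $\mO$ of Table \ref{table:3}: (i) decide whether $I(f)$ is zero, a non-zero square, or a non-square; and (ii) when $I(f)$ is a non-zero square, decide whether the two points $L, L^\perp$ of $\pi^{-1}(f)$ lie in one $G$-orbit (\emph{self-dual}) or in two distinct $G$-orbits (\emph{non-self-dual}).

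For (i), the identity $I\cdot(1 - 1728/\jmath) = (J/I)^2$, valid whenever $I \neq 0$, shows that $I(f) \in (\bF_q^\times)^2$ if and only if $1 - 1728/\jmath(f) \in (\bF_q^\times)^2$; this is precisely the condition defining $J_i^+$, so the first three rows of Table \ref{table:2} are exactly the $\jmath \in J_i^+$ strata. For $\jmath(f) = 0$ one has $I(f) = 0$, so every such quartic orbit contributes. For $\jmath(f) = 1728$, $I(f)$ is unconstrained by $\jmath$ and its quadratic character must be computed case by case on the five representatives of Table \ref{table:3}. For (ii), the covering is Galois with group generated by the sheet involution $\ast$, and the action of $\text{Stab}_G(f) = \text{Stab}_G(\{L,L^\perp\})$ on the fiber factors through a homomorphism $\varphi : \text{Stab}_G(f) \to \bZ/2$; the orbit of $L$ is self-dual iff $\varphi$ is surjective. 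This $\varphi$ is computed by lifting the explicit generators of $\text{Stab}_G(f)$ to linear maps on $\mQ$ via the covering.

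Translating orbit-by-orbit, a quartic orbit with $I(f) = 0$ yields one line orbit of the same size (automatically self-dual); with $I(f)$ a non-zero square and $\varphi$ trivial, two line orbits of the same size (non-self-dual); with $I(f)$ a non-zero square and $\varphi$ surjective, one line orbit of twice the size (self-dual); and with $I(f)$ a non-square, no line orbit at all. For the strata $\jmath \in J_i^+$, a direct check shows that $\varphi$ is trivial: it is automatic in $J_1^+$ where $\text{Stab}_G(f)$ is trivial, and in $J_2^+$ and $J_4^+$ one verifies that the lifts of the involutive generators of the stabilizer fix both $L$ and $L^\perp$. Each quartic orbit in these strata therefore produces two non-self-dual line orbits of the same size. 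For $\jmath = 0$ the strata transfer identically from Table \ref{table:1} to Table \ref{table:2}.

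The main obstacle is the $\jmath(f) = 1728$ row: here $\text{Stab}_G(f)$ has order $4$ or $8$, and both the quadratic character of $I(f)$ and the map $\varphi$ depend on whether $-1, 2, -2$ are squares in $\bF_q$, i.e.\ on $q \bmod 12$. A case analysis of the five representatives of Table \ref{table:3} gives: when $q \equiv \pm 1 \pmod{12}$, both $|G|/8$-quartic-orbits lift self-dually contributing one $|G|/4$-line-orbit each, and exactly one of the three $|G|/4$-quartic-orbits lifts non-self-dually contributing two more $|G|/4$-line-orbits, producing the claimed four $|G|/4$-orbits with a $2$-and-$2$ self-dual/non-self-dual split; when $q \equiv \pm 5 \pmod{12}$, both $|G|/8$-quartic-orbits have $I(f)$ a non-square and two of the three $|G|/4$-quartic-orbits lift self-dually, giving one $|G|/2$-line-orbit each. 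Summing across all $\jmath$-strata reproduces Table \ref{table:2}, and in particular the bottom-row totals match Conjecture \ref{conj1}.
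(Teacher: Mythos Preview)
Your proposal follows essentially the same route as the paper: push Table~\ref{table:1} through the equivariant double cover $\pi:\mQ\to\mF^+$ by first deciding when $I(f)$ is a square (your point (i), the paper's Proposition~\ref{J+prop}) and then deciding when the fiber $\{L,L^\perp\}$ is a single orbit (your point (ii), the paper's Proposition~\ref{selfdualj1728}). The one organizational difference is how $\varphi$ is computed. You propose to lift the generators of $\mathrm{Stab}_G(f)$ to $\mQ$ case by case; the paper instead proves once and for all (Lemma~\ref{lem_sgn}) that $\varphi(g)=\mathrm{sgn}(\sigma_g)$, the sign of the permutation that $g$ induces on the four roots. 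Since for $\jmath(f)\neq 0,1728$ the stabilizer embeds in $\langle\sigma_1,\sigma_2\rangle\subset A_4$, this immediately gives $\varphi$ trivial on all of $J_1^+\cup J_2^+\cup J_4^+$ without any lifting, and for $\jmath(f)=1728$ it reduces the self-duality question to reading off which of the stabilizers in Proposition~\ref{stabilizer_prop} contain an odd permutation. Your direct-lift method would reach the same answers but with more ad hoc computation; the sign formula is the cleaner packaging.
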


A list of generators in $PG(3,q)$ for lines representing these $(2q-3+\mu)$ orbits  together with the isomorphism class of the stabilizer subgroup of each orbit  is given in Table \ref{table:4}. 

Although we work throughout with  fields of characteristic different from $2$ and $3$, in  Appendix \ref{appendix}, we address a basic question about the $GL_2(F)$-representation $\text{Sym}^m V$ -- the $m$-th symmetric power of the vector space $V=F^2$ over an arbitrary field $F$. Let $D_m V$ (the $m$-th divided power of $V$) denote the dual vector space to $\text{Sym}^m(V^*)$  (the vector space of binary forms of degree $m$ over $F$, see \S \ref{setup} below). The basic questions are i) when are $D_m V$ and $\text{Sym}^m V$ irreducible as representations of $GL_2(F)$ and ii) when are they isomorphic. We give 
a complete answer to these questions in  arithmetic as well as  geometric terms (see Theorem \ref{lcm_cond} for the full result.)
\begin{theorem*} Let $F$ be a field that is arbitrary but for the assumption that $|F| \geq  m+2$ in case $F$ is finite. The following four conditions are equivalent: \begin{enumerate}
\item [i)] The $GL_2(F)$-representation $D_mV$ is irreducible.
\item [ii)]  The $GL_2(F)$-representations $D_mV$ and  $\text{Sym}^m V$ are isomorphic.
\item [iii)]  The intersection of all osculating hyperplanes of the rational normal curve $C_m$ of degree $m$ in $\bP(D_mV)$ is trivial.
\item [iv)]  Either char$(F)=0$ or $\text{char}(F)=p$ and $b < p$ where $m+1=p^a b$ is the unique factorization of $m+1$ with $b$ relatively prime to $p$.
\end{enumerate}
\end{theorem*}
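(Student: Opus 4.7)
The plan is to prove the equivalences via the chain $(iv) \Leftrightarrow (iii) \Leftrightarrow (i)$ and $(iii) \Leftrightarrow (ii)$. Throughout I would use $|F| \geq m+2$ to ensure that the diagonal torus $T \subset GL_2(F)$ acts with $m+1$ distinct characters on both $D_m V$ and $\text{Sym}^m V$; consequently, each $T$-stable subspace is a direct sum of one-dimensional weight spaces, spanned in $D_m V$ by the basis $v_0, \ldots, v_m$ dual to the monomial basis $(X^j Y^{m-j})_{j=0}^m$ of $\text{Sym}^m V^*$, and in $\text{Sym}^m V$ by $E_0^j E_1^{m-j}$ for a chosen basis $E_0, E_1$ of $V$. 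Weight matching will force any $GL_2(F)$-equivariant isomorphism $D_m V \to \text{Sym}^m V$ to be diagonal in these bases.

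For $(iii) \Leftrightarrow (iv)$, I would identify the osculating hyperplane at $[s^m : s^{m-1}t : \cdots : t^m] \in C_m$ with the annihilator of the binary form $(tX-sY)^m = \sum_i \binom{m}{i} t^i (-s)^{m-i} X^i Y^{m-i}$ under the canonical pairing $\text{Sym}^m V^* \otimes D_m V \to F$. An element $w = \sum_i w_i v_i$ lies in every such hyperplane iff $\sum_i \binom{m}{i} w_i t^i (-s)^{m-i}$ vanishes on all of $\bP^1(F)$; since a nonzero binary form of degree $m$ has at most $m$ projective roots while $|F|+1 \geq m+3$, the polynomial must vanish identically, forcing $\binom{m}{i} w_i = 0$ for each $i$. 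Triviality of the intersection is thus equivalent to $\binom{m}{i} \neq 0$ in $F$ for all $0 \leq i \leq m$. When $\text{char}(F) = p$, Lucas' theorem converts this into the requirement that the base-$p$ expansion $m = (m_a, \ldots, m_0)$ (with $m_a \neq 0$) satisfies $m_j = p-1$ for every $j < a$: otherwise $i = (m_j + 1) p^j \leq m$ produces $\binom{m}{i} \equiv 0 \pmod{p}$. Writing $b = m_a + 1$ then gives $m+1 = b\,p^a$ with $b \in \{2, \ldots, p\}$, which is (iv) directly when $b < p$ and, when $b = p$, via the factorization $m+1 = 1 \cdot p^{a+1}$; the char $0$ case is vacuous.

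For $(i) \Leftrightarrow (iii)$, the intersection $\bigcap_P H_P$ is a $GL_2(F)$-invariant subspace of $D_m V$ (the group permutes the osculating hyperplanes) contained in each proper $H_P$; if $D_m V$ is irreducible it must vanish, yielding (iii). Conversely, assume (iii) and let $0 \neq W \subset D_m V$ be a subrepresentation. By $T$-invariance $W$ contains some $v_j$. Applying $U^- = \bbsm 1 & 0 \\ c & 1 \besm$ with $c \neq 0$ gives $U^- v_j = \sum_{i \leq j} \binom{m-i}{j-i} c^{j-i} v_i$; the $v_0$-component is $\binom{m}{j} c^j v_0$, nonzero by (iii), so $v_0 \in W$. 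Then $U^+ v_0 = \sum_{i=0}^m b^i v_i$, and varying $b$ across $m+1$ distinct values in $F^\times$ (available since $|F^\times| \geq m+1$) together with a Vandermonde argument recovers each $v_i$, forcing $W = D_m V$.

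For $(ii) \Leftrightarrow (iii)$, by the weight-matching above, any $GL_2(F)$-equivariant iso $\phi: D_m V \to \text{Sym}^m V$ has the form $\phi(v_j) = \alpha_j E_0^j E_1^{m-j}$ with each $\alpha_j \neq 0$. Equating $\phi(U^- v_j) = U^- \phi(v_j)$ forces $\alpha_i \binom{m-i}{j-i} = \alpha_j \binom{j}{i}$ for all $i \leq j$; setting $i = 0$ yields $\alpha_j = \alpha_0 \binom{m}{j}$, and the trinomial identity $\binom{m}{i}\binom{m-i}{j-i} = \binom{m}{j}\binom{j}{i}$ makes the remaining constraints automatic. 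Such a $\phi$ therefore exists iff $\binom{m}{j} \neq 0$ in $F$ for every $j$, i.e., iff (iii); when this holds, $v_j \mapsto \binom{m}{j} E_0^j E_1^{m-j}$ is an explicit iso. Equivariance under $U^+$ is symmetric, and under the Weyl element reduces to $\binom{m}{j} = \binom{m}{m-j}$, after which the Bruhat decomposition $GL_2(F) = B \cup BwB$ yields equivariance over all of $GL_2(F)$. The main obstacle is tracking the paper's action conventions on $V$, $V^*$, $\text{Sym}^m V$, and $D_m V$ consistently through the weight-space argument; the underlying number-theoretic input reduces to the single appeal to Lucas' theorem.
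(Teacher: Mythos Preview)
Your proof is correct and follows essentially the same strategy as the paper's: both reduce every condition to the nonvanishing of all $\binom{m}{i}$ in $F$, using the diagonal torus to force weight-space decompositions and then a unipotent element $\bbsm 1&0\\1&1\besm$ to pin down the remaining structure. The only notable difference is in the arithmetic step: you invoke Lucas' theorem to characterize when all $\binom{m}{i}\not\equiv 0\pmod p$, whereas the paper uses the identity $\mathrm{lcm}\{\binom{m}{0},\dots,\binom{m}{m}\}=\mathrm{lcm}\{1,\dots,m+1\}/(m+1)$; your irreducibility argument via explicit raising/lowering to $v_0$ is also a slightly cleaner packaging of what the paper does via a reduced-echelon-form analysis together with the fact that $C_m(F)$ spans $D_mV$.
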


The rest of the paper is organized as follows. In \S \ref{setup} we describe the geometric setup of the problem  over an arbitrary  field $F$. In \S \ref{chFneq23} we reduce the problem of classifying $PGL_2(F)$-orbits  of lines of $\bP(D_3 V)$ to the problem of classifying $PGL_2(F)$-orbits of binary quartic forms (more precisely, those quartic forms whose apolar invariant is a square). Here we assume char$(F) \neq 2,3$. Continuing with the assumption char$(F) \neq 2,3$,   in \S  \ref{sec4} we discuss   and develop some results about binary quartic forms and their $GL_2(F)$-invariants. In \S \ref{orbitsP4} and \S \ref{orbitsP3} we further assume $F = \bF_q$ and obtain the main results Theorem \ref{result2} and Theorem \ref{thm_main}, respectively.

\section{Geometric Setup} \label{setup}
We recall that the rational normal curve $C_m$ of degree $m$ in the $m$-dimensional projective space $\bP^m$ is the image of the Veronese map  $\nu_m: \bP^1  \to \bP^m$ given by 
\beq \label{eq:nudef} \nu_m(s,t)=(s^{m}, s^{m-1}t, \dots, s t^{m-1}, t^{m}). \eeq
It will be useful to consider the intrinsic definition of the Veronese maps $\nu_m$.   
Throughout this paper,  $V$ denotes the vector space $F^2$ over a field $F$. Consider the vector spaces 
Sym$^m V$ which is the $m$-th symmetric power of $V$, and Sym$^m(V^*)$ which is the $m$-th symmetric power of the dual vector space $V^*$, and finally 
the dual vector space of $\text{Sym}^m(V^*)$ (known as the $m$-th divided power of $V$) which we denote as 
\[  D_m V=  (\text{Sym}^m(V^*))^*. \]
 For the standard basis $e_1 = (1,0), e_2=(0,1)$ of $F^2$ we get induced bases on the vector spaces Sym$^m V $, Sym$^m(V^*)$ and $D_m V$.
 The induced basis on Sym$^m V$ is denoted by  $e_1^{m-i}e_2^i , i=0, \dots ,m$,  
and if $X, Y$ is the basis of $V^*$ dual to the basis $e_1, e_2$ of $V$, the induced basis
on Sym$^m(V^*)$ is denoted by $X^{m-i}Y^i, i=0, \dots ,m$. The   dual of this last basis, 
denoted by $e_1^{[m-i]} e_2^{[i]} , i=0, \dots ,m$ is the induced basis of $D_m V$. 
We now recall the coordinate-free definition of the Veronese map $\nu_m$ 
of  \eqref{eq:nudef}. Given $\theta_1, \dots, \theta_m  \in V^*$ and $v \in V$,  the function $(\theta_1, \dots, \theta_m) \mapsto  \theta_1(v) \theta_2(v) \dots \theta_m(v)$
  is multilinear and symmetric in  $\{\theta_1, \dots, \theta_m\}$  and hence defines a map 
\[  V \to  (\text{Sym}^m(V^*))^*=D_m V, \qquad \text{given by} \quad   s e_1 + te_2 \mapsto \sum_{i=0}^m   s^{m-i}t^i e_1^{[m-i]} e_2^{[i]}, \] which, at the projective level,  is the same as the Veronese map $\nu_m$ defined in \eqref{eq:nudef}.
The action of $GL_2(F)$ on $V=F^2$ induces actions on the spaces  Sym$^m V$, Sym$^m(V^*)$ and $D_m V$. For $g \in GL_2(F)$, we denote these actions by 
$g^m, g_m, g^{[m]}$, respectively. For example, for $g = \bbsm a &b \\ c& d \besm$ in $GL_2(F)$ we have 
\[ g^m(e_1^{m-i}e_2^i) = (ae_1+be_2)^{m-i} (c e_1 +de_2)^i, \quad g_m f(X,Y)=\det(g)^{-m} f(dX-bY, aY-cX). \]
The action $g^{[m]}$ on $D_m V$ is dual to the action $g_m$ on $\text{Sym}^m(V^*)$. For example, the matrices $g^{[3]}$ and $g^{[4]}$ are given by:
\beq \label{eq:g_3}
g^{[3]}=\bbm a^3 &3a^2b&3ab^2&b^3\\
a^2c &a(ad+2bc) &b(bc+2ad)&b^2d\\
ac^2 &c(bc+2ad)&d(ad+2bc)&bd^2\\
c^3 &3c^2d&3cd^2&d^3\bem, 
 \eeq
\beq \label{eq:g_4}
g^{[4]}=\bbm a^4 &4a^3b&6a^2b^2&4ab^3&b^4\\
a^3c &a^2(ad+3bc)  & 3ab(ad+bc)&b^2(bc+3ad)&b^3d\\
a^2c^2 &2ac(bc+ad)& (ad+bc)^2+2abcd&2bd(ad+bc)&d^2b^2\\
c^3a &c^2(bc+3ad) & 3cd (ad+bc) &d^2(ad+3bc) &d^3b\\
c^4 &4c^3d&6c^2d^2&4cd^3&d^4\bem. \eeq
 It is clear from the construction that for $g \in GL_2(F)$, we have
\[ \label{eq:nu_equivariance}  \nu_m(g \cdot v) = g^{[m]} \nu_m(v), \]
or more explicitly $\nu_m(as+bt,cs+dt)=g^{[m]} (s^{m}, s^{m-1}t, \dots, s t^{m-1}, t^{m})$. \\

A hyperplane of $\bP(D_m V)$ is given by an element $H = \sum_{i=0}^m a_i X^{m-i} Y^i$ of the dual space $\bP( \text{Sym}^m(V^*))$. The intersection multiplicity  $\imath_{H,C_m}(P)$ of  $H$ with $C_m$ at a point $P$ of $C_m$ is the least power of $z$ in $f(z) = \sum_{i=0}^m a_i h_i (z)$ where $z \mapsto h(z) = (h_0(z),\dots, h_m(z))$ is a parametrization of $C_m$ near $P = h(0)$.  For example, if $P = \nu_m(1,0)$ and $h(z)=(1,z, \dots,z^m)$ then  $f(z) = \sum_{i=0}^m a_i z^{i}$ and hence $\imath_{H,C_m}(P)$ is the largest value of $i$ such that $a_0= \dots= a_{i-1} =0$.  In particular, $\imath_{X^{m-i}Y^i,C_m}(P)=i$ for $P = \nu_m(1,0)$. Since $PGL_2(F)$ acts transitively on $C_m(F)$ it is clear that  $\imath_{H,C_m}(P)$  can take any of the values from $0$ to $m$. For $j=0 \dots m-1$,  the $j$-th osculating space to $C_m$ at $P$ is the intersection of all hyperplanes $H$ satisfying  $\imath_{H,C_m}(P) \geq j+1$. It is denoted as $\mO_j(P)$.  For example, for $P = \nu_m(1,0)$,  the spaces $\mO_j(P)$ are the $j$-dimensional linear subspace of $\bP(D_m V)$ given by the vanishing of $X^{m-j-1}Y^{j+1},\dots, X^0Y^m$. In particular, $\mO_{m-1}(P)$,  the $(m-1)$-th osculating space to $C_m$ at $P$ is called the \emph{osculating hyperplane} to $C_m$ at $P$. If $P= \nu_m(1,0)$ then this hyperplane is given by $Y^m=0$. For $P=\nu_m(s,t)$ we have  $\mO_{m-1}(P)= g \cdot  \mO_{m-1}(\nu_m(1,0))$ for $g = \bbsm s& 0\\ t& 1 \besm$. Therefore, $\mO_{m-1}(P)$ is defined by the vanishing of 
\[ P_{(s,t)}(X,Y)=\sum_{i=0}^m (-1)^i \tbinom{m}{i} \, t^{m-i}s^i   X^{m-i}Y^i . \]
\begin{definition} \label{polardual}  {\bf Bilinear form induced by $C_m$} \hfill \\
The map $C_m \to \bP (\text{Sym}^m(V^*))$ that takes  $P$  to the osculating hyperplane $\mO_{m-1}(P)$ extends to a linear map $\Omega_m: D_m V \to  \text{Sym}^m(V^*)$ or equivalently a bilinear form 
on  $D_m V$ represented by the $(m+1) \times (m+1)$ matrix 
\beq  \label{eq:Am} A_m =
 \bbsm & & && \binom{m}{0} \\
&& &-\binom{m}{1}&  \\
&    &\iddots& &  \\
&(-1)^{m-1} \binom{m}{m-1}&&  & \\ 
(-1)^m \binom{m}{m}&& &  &  
 \besm.\eeq
 In the case when the matrix $A_m$ is invertible (i.e. all the binomial coefficients $\binom{m}{0}, \dots, \binom{m}{m}$  are non-zero in the field $F$), we also refer to $\Omega_m$ as the polarity on $D_mV$ induced by $C_m$. It is clear 
from the matrix $A_m$ that the bilinear form $\Omega_m$ is symmetric if $m$ is even, and alternating if $m$ is odd.
 \end{definition}

\begin{lem}
  The elements of $GL_2(F)$ are similitudes for the bilinear form $\Omega_m$:  
 \beq \label{eq:Am_equivariance}   A_m g^{[m]} = \det(g)^m g_m A_m. \eeq   In other words, $A_m$ is an element of the vector space 
 \[ \text{Hom}_{GL_2(F)}(D_m V, \text{Sym}^m(V^*) \otimes  (\otimes^m \wedge^2(V)) \] of $GL_2(F)$-equivariant maps from $D_m V$ to $\text{Sym}^m(V^*) \otimes  (\otimes^m \wedge^2(V))$.
\end{lem}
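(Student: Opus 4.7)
The plan is to verify the matrix identity $A_m g^{[m]} = \det(g)^m\, g_m A_m$ as an equality of two linear maps $D_m V \to \text{Sym}^m(V^*)$ by checking both sides on a spanning set. My first step is to recall that $\Omega_m$ was defined by linearly extending the assignment $\nu_m(s,t) \mapsto P_{(s,t)}$, so that $A_m\nu_m(s,t) = P_{(s,t)}(X,Y)$. My second step is to note that the rational normal curve $C_m$ is non-degenerate: the Veronese vectors $\nu_m(1, t_j)$ for $m+1$ distinct $t_j$ form a Vandermonde matrix of full rank, so $\nu_m(V)$ spans $D_m V$. (If $|F|$ is too small one passes to an extension, which is harmless because both sides of the identity are polynomial in the entries of $g$.) Consequently, it suffices to verify the identity after applying both sides to an arbitrary vector $\nu_m(s,t)$.

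The pointwise check hinges on the closed-form expression
\[ P_{(s,t)}(X,Y) \;=\; (tX - sY)^m, \]
which is immediate by binomial expansion and matches the defining sum $\sum_i (-1)^i \binom{m}{i} t^{m-i}s^i X^{m-i} Y^i$ recorded in the paper; it rewrites $A_m\nu_m(s,t)$ as the $m$-th power of a linear form. Combined with the equivariance $g^{[m]}\nu_m(s,t) = \nu_m(as+bt,\,cs+dt)$ already noted above, the left-hand side collapses to
\[ A_m g^{[m]}\nu_m(s,t) \;=\; A_m\nu_m(as+bt,\,cs+dt) \;=\; \bigl((cs+dt)X - (as+bt)Y\bigr)^m. \]
For the right-hand side, applying the formula $g_m\varphi(X,Y) = \det(g)^{-m}\varphi(dX-bY,\,aY-cX)$ to $\varphi = (tX-sY)^m$ yields
\[ \det(g)^m\, g_m A_m \nu_m(s,t) \;=\; \bigl(t(dX-bY) - s(aY-cX)\bigr)^m, \]
and rearranging the inner linear combination produces exactly $(cs+dt)X - (as+bt)Y$, matching the left-hand side.

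The reinterpretation of the identity as $A_m \in \mathrm{Hom}_{GL_2(F)}(D_mV,\,\text{Sym}^m(V^*) \otimes (\otimes^m\wedge^2 V))$ is then a one-line formal consequence: the character by which $GL_2(F)$ acts on the one-dimensional line $\otimes^m\wedge^2 V$ is $g \mapsto \det(g)^m$, so tensoring the codomain by this line absorbs the scalar twist and upgrades the twisted relation to strict equivariance. I anticipate no serious obstacle; the essential observation driving the entire argument is the recognition of the coefficients of $P_{(s,t)}$ as the binomial expansion of $(tX-sY)^m$, after which the lemma reduces to a single substitution.
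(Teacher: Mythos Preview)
Your proof is correct and takes a genuinely different route from the paper's. The paper verifies the matrix identity $A_m g^{[m]} = \det(g)^m g_m A_m$ by checking it on a generating set of $GL_2(F)$, namely the diagonal matrices $\bbsm 1 & 0 \\ 0 & d \besm$, the swap $\bbsm 0 & 1 \\ 1 & 0 \besm$, and the unipotents $\bbsm 1 & 0 \\ c & 1 \besm$; since the identity is multiplicative in $g$, this suffices. You instead fix $g$ arbitrary and check the identity on a spanning set of the \emph{domain} $D_m V$, namely the Veronese vectors $\nu_m(s,t)$, exploiting the closed form $A_m\nu_m(s,t) = (tX-sY)^m$ together with the already-recorded equivariance $g^{[m]}\nu_m(s,t) = \nu_m(g\cdot(s,t))$. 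Your approach is more conceptual in that it avoids any explicit matrix calculation and makes transparent \emph{why} the identity holds: $\Omega_m$ was built to send $\nu_m(s,t)$ to the $m$-th power of the linear form vanishing at $(s,t)$, and both $\nu_m$ and the $m$-th-power map are visibly equivariant. The paper's approach, by contrast, is self-contained at the level of matrices and does not need the observation that $P_{(s,t)} = (tX-sY)^m$, nor any discussion of spanning over small fields. One minor remark: your aside about passing to an extension is fine but slightly indirect; a cleaner phrasing is that your computation establishes the equality $A_m g^{[m]}\nu_m(s,t) = \det(g)^m g_m A_m \nu_m(s,t)$ as a polynomial identity in $s,t$, and comparing coefficients of the monomials $s^{m-i}t^i$ directly yields equality of the matrix columns, with no appeal to spanning needed.
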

\begin{proof}
    The second assertion is just a restatement of the identity \eqref{eq:Am_equivariance}. This identity is readily verified for the matrices $g = \bbsm 1 &0 \\0 &d \besm,  \bbsm  0& 1\\ 1& 0\besm$ and $ \bbsm 1 & 0\\c  &1 \besm$. Since these matrices generate $GL_2(F)$, it follows that \eqref{eq:Am_equivariance} holds for all $g \in GL_2(F)$. \\
We remark that in fact,  we show in Proposition \ref{equivs} of  the Appendix  that 
 \[ \text{Hom}_{GL_2(F)}\left(D_m V, \text{Sym}^m(V^*)\otimes (\otimes^m \wedge^2V)\right)\]
  is the  one-dimensional  vector space spanned by  $A_m$, provided that $|F|\geq m+2$ in case $F$ is finite.  We also show there, that  any $PGL_2(F)$-equivariant isomorphism between $\bP (D_m V)$ and $\bP(\text{Sym}^m(V^*))$ must be represented by $A_m$. In particular, such an isomorphism exists if and only if all the binomial coefficients $\binom{m}{i}$ are relatively prime to char$(F)$.
    \end{proof}

We use the Pl\"{u}cker embedding to represent the lines  of $\bP(D_3 V)$ as the points of the  Klein quadric $\mQ$ in $\bP(\wedge^2 D_3 V)$:  Let us denote the basis $e_1^{[3-i]} e_2^{[i]}, 0 \leq i \leq 3$ of $D_3 V$ as $B_0, B_1, B_2, B_3$. Let  $B_{ij} = B_i \wedge B_j, 0\leq i < j \leq 3$ denote the induced basis on $\wedge^2 D_3 V$.  A point $\sum p_{ij} B_{ij}$ of $\bP(\wedge^2 D_3 V)$ lies on the quadric $\mQ$ if and only if it satisfies the Pl\"{u}cker relation 
\beq \label{plucker} p_{01}p_{23} -p_{02} p_{13} + p_{03} p_{12}=0. \eeq

Given  two lines $L$ and  $L'$ of $\bP(D_3V)$ represented by  points $Q$  and  $Q'$   of $\mQ$, respectively, we recall the `Klein correspondence'  which asserts  that  $L$ intersects $L'$  if and only if the line joining $Q$ and $Q'$  lies on $\mQ$. In particular, the set of all lines $L'$ that intersect  a fixed line $L$ is represented by the points  of  intersection of $\mQ$ with the  tangent hyperplane to $\mQ$ at $Q$. We denote this intersection as $\mS_Q$ (it is a Schubert divisor of $\mQ$ viewed as a Grassmannian).  If $p_{ij}$ are the coordinates of $Q$, then $\mS_Q$  consists of points $Q'$ of $\mQ$ whose coordinates $p_{ij}'$ satisfy
\beq \label{eq:TQ} p'_{01}p_{23}  +p_{01}p'_{23} -p'_{02} p_{13}  -p_{02}p'_{13} + p'_{03} p_{12}+ p_{03} p'_{12}=0.\eeq
For example, if $Q = B_{01}$ then $\mS_Q$ is the Schubert divisor of all points of $\mQ$ whose Pl\"{u}cker coordinate satisfies $p_{23}=0$. The tangent lines of $C$ are represented by  a curve $\tilde C$ on $\mQ$. 
We show that $\tilde C$ is a rational normal  curve of degree $4$ lying on $\mQ_0=\mQ \cap \mH$, where $\mH$ is the hyperplane of $\bP(\wedge^2 D_3 V)$ given by $3p_{12}-p_{03}=0$:  The tangent line to $C$ at a point $\nu_3(s,t)$ is spanned by  $(3s^2,2st,t^2,0)$ and $(0,s^2,2st,3t^2)$, and hence  the  Pl\"{u}cker coordinates of this line are  given by 
\beq \label{eq:mu} T(s,t) = s^4 B_{01} + 2ts^3 B_{02} + s^2t^2(3 B_{03} +B_{12}) + 2st^3 B_{13} +t^4 B_{23}. \eeq
Therefore, $\tilde C : (s,t) \mapsto T(s,t)$ is a rational normal curve of degree $4$ lying on $\mQ_0$. Let $\mS_{s,t}$ denote $\mS_Q$ for $Q = T(s,t)$, i.e. the points of $\mS_{s,t}$ represent  the lines of $\bP(D_3 V)$ which intersect  the tangent line to $C$ at $\nu_3(s,t)$.
Using \eqref{eq:mu} in \eqref{eq:TQ}, we see that the points of  $\mS_{s,t}$ have coordinates $p_{ij}$  satisfying:
 \begin{align}  \label{eq:incidence} t^4 p_{01} -2 st^3 p_{02} +s^2t^2(p_{03}+3 p_{12}) -2 s^3t p_{13} +s^4 p_{23} &=0\\
\nonumber  p_{01}p_{23} -p_{02} p_{13} + p_{03} p_{12}&=0. \end{align}

\section{Reduction to $PGL_2(F)$ orbits of binary quartic forms when char$(F) \neq 2, 3$} \label{chFneq23}
As mentioned in \S\ref{introduction}, we will show that there is a $2$-sheeted branched covering  map $\pi: \mQ \to \bP(\text{Sym}^4(V^*))$ which is $PGL_2(F)$-equivariant and whose image consists of  binary quartic forms $f$ whose apolar-invariant $I(f)$ is a square in $F$. The inverse image $\pi^{-1}(f)=\{L , L^{\perp}\}$  consists of a pair of  points of $\mQ$ which are Hodge duals (to be defined  below) of each other in $\bP(\wedge^2 D_3 V )$. They are  also  `polar duals' of each other (as lines of $\bP(D_3 V)$), where polar dual refers to the orthogonal complements with respect to the symplectic bilinear form $\Omega_3$).
We will now interpret this polar duality in terms of Hodge duality.  Before doing so, we quickly mention the main points, especially for the reader who wishes to skip the details.
We define the Hodge star  linear isomorphisms $\ast_i:  \wedge^i D_3 V \leftrightarrow \wedge^{4-i} D_3V$ for $i=0,1,2$. At the projective level $\ast_i : \bP( \wedge^i D_3 V) \leftrightarrow \bP (\wedge^{4-i} D_3 V)$ are easy to describe. If $i=0$, both projective spaces are zero-dimensional and $\ast_0$ just sends the unique point of $\bP(\wedge^{0} D_3 V)$ to the unique point of $\bP (\wedge^{4} D_3 V)$. If $i=1$ the points of  $\bP (\wedge^{3} D_3 V)$ are just the hyperplanes of $\bP(D_3 V)$ and $\ast_1(\xi)$ 
is the polar dual (orthogonal complement) of $\xi$ with respect to $\Omega_3$. If $i=2$ and $\xi \in \mQ$ represents a line $L$ of $\bP(D_3 V)$ then  again $\ast_2(\xi) \in \mQ$ represents  the polar dual/orthogonal complement $L^{\perp}$. We will show below
(see Definition \ref{hodge_def}) that this action of $\ast_2$ on $\mQ$ extends to a linear isomorphism on $\bP(\wedge^2 D_3V)$. If we are willing to assume this, then the linear automorphism $\ast_2$ on $\wedge^2 D_3 V$ can be completely determined: Since a tangent line to $C$ (given in coordinates by \eqref{eq:mu}) is its own polar dual, it follows that 
\[ s^4 B_{01} + 2ts^3 B_{02} + s^2t^2(3 B_{03} +B_{12}) + 2st^3 B_{13} +t^4 B_{23}\]
is fixed by $\ast_2$, and hence $\ast_2$ fixes $B_{01}, B_{23}, B_{02}, B_{13}$ and $(3 B_{03} + B_{12})$. Also, the polar dual of the secant line joining $\nu_3(1,0)$ and $\nu_3(1,t)$ represented by $(B_{01} +t B_{02} +t^2 B_{03})$ is the intersection of the osculating planes to $C$ at these two points, represented by $B_{01}+ t B_{02} + t^2/3  B_{12}$. It follows that $\ast_2$ interchanges $B_{12}$ and $3 B_{03}$. We summarize this as
\beq \label{eq:ast2} *_2 (B_{01},B_{02},B_{03},B_{12},B_{13},B_{23} )=(B_{01},B_{02}, \tfrac{1}{3} B_{12},3 B_{03},B_{13},B_{23}).\eeq

We now return to  the definition of  Hodge duality  on the exterior algebra of $D_3 V$ with respect to the symplectic form $\Omega_3$.
Viewing the linear map $\Omega_3^{-1}: (D_3 V)^* \to D_3 V$ as a symplectic bilinear form on   $(D_3 V)^*$ we get an element of $\wedge^2 ((D_3 V)^*)^*$. As  mentioned in the Remark \ref{exteriordual}, there is a $GL(V)$-equivariant isomorphism between $\wedge^2 ( (D_3V)^*)^*$ and $\wedge^2 D_3(V)$ which allows us to identify 
 $\Omega_3^{-1}$ with the following element $\gamma_3$ of $\wedge^2 D_3V$:
\[  \gamma_3 = -B_{03}+ \tfrac{1}{3} B_{12}.  \]
Since all elements of $GL_2(F)$ are similitudes for $\Omega_3$, we see that the point  $\mP_0$ of $\bP (\wedge^2 D_3 V)$ represented by $\gamma_3$,  is fixed by $PGL_2(F)$. We also consider $\Gamma = -\tfrac{1}{2} \gamma_3 \wedge \gamma_3 \in \wedge^4 D_3V$ given by 
\[ \Gamma=  \tfrac{1}{3}  B_0 \wedge B_1 \wedge B_2 \wedge B_3.\]
We are now ready to define the Hodge $\ast$ operator:

\begin{definition} \label{hodge_def}
Given $\xi \in \wedge^i D_3 V$  we define  the Hodge dual $*\xi \in  \wedge^{4-i} D_3V$ by 
\[ *\xi= \imath_{ (\wedge^i \Omega_3) \xi } \Gamma.   \]
\end{definition}
Here $\imath$ is the interior multiplication operation in exterior algebra:  for $\zeta \in \wedge^i   (D_3V)^*$ the expression $\imath_{\zeta} \Gamma$ in $\wedge^{4-i} D_3V$ is defined by requiring 
$\langle \psi , \imath_{\zeta} \Gamma \rangle = \langle \zeta \wedge \psi, \Gamma \rangle$ for all $\psi \in \wedge^{4-i} (D_3V)^*$ where $\langle, \rangle$ denotes the pairing between a vector space and its dual. We now show that $\ast$ acts on $\wedge^2 D_3V$ by \eqref{eq:ast2}. Since $\Omega_3$ carries $B_0,B_1,B_2,B_3$ to 
$Y^3,-3Y^2X,3YX^2,-X^3$ or in short $\Omega_3(B_i)=(-1)^i \binom{3}{i} X^i Y^{3-i}$, it follows that $\wedge^2 \Omega_3 (B_{ij})=(-1)^{i+j-1} 
\binom{3}{i}\binom{3}{j} (X^jY^{3-j})\wedge (X^i Y^{3-i})$. Therefore, 
\[\ast B_{ij}= \imath_{\wedge^2 \Omega_3 (B_{ij})} \Gamma =  \frac{\text{sgn}(i j k \ell)}{3} (-1)^{i+j-1} 
\tbinom{3}{i}\tbinom{3}{j} B_{3-\ell, 3-k}, \quad \text{where $\{ijk\ell\}=\{0123\}$.}\]
Here $i<j$, $k <\ell$ and sgn$(ijk\ell)$ is the sign of the permutation that sends $0,1,2,3$ to $i,j,k,\ell$ in that order. This agrees with \eqref{eq:ast2}. It is clear from \eqref{eq:ast2} that $\ast$ is an involution on $\wedge^2 D_3V$ and has eigenvalues $\pm 1$. An element $\xi$ of $\wedge^2 D_3 V$ is called \emph{self-dual} if $\ast \xi=\xi$ and \emph{anti-self-dual} if $\ast \xi=-\xi$. We denote the linear subspaces of self-dual and anti-self-dual vectors of $\wedge^2 D_3 V$ by $\wedge^2_+ (D_3 V)$ and $\wedge^2_-(D_3 V)$ respectively.  It will be convenient to work with the following new basis of $\wedge^2 D_3 V$:
\[E_0=B_{01}, E_1 = 2 B_{02}, E_2 = B_{12}+ 3 B_{03}, E_3=2 B_{13},  E_4=B_{23}, E_5 =  3 B_{03} -B_{12}.\]
It follows from \eqref{eq:ast2} that $\wedge^2_+(D_3 V)$ is the $5$-dimensional subspace of $\wedge^2 D_3V$  spanned by   $E_0, \dots, E_4$, and $E_5$ spans the $1$-dimensional subspace of $\wedge^2_- (D_3V)$.
The coordinates $(z_0,\dots, z_5)$ with respect to the new basis are related to the Pl\"{u}cker coordinates $p_{ij}$ by:
\beq \label{eq:p_z} (p_{01}, p_{02}, p_{03}, p_{12}, p_{13}, p_{23})=(z_0, 2z_1,3(z_2+z_5), z_2-z_5, 2z_3,z_4).  \eeq
The linear isomorphism $\ast_2$ on $\wedge^2 D_3 V$ is given in the new coordinates by 
\[ \ast (z_0, \dots, z_4, z_5) =(z_0, \dots, z_4, -z_5). \]
Using the matrices \eqref{eq:g_3} and \eqref{eq:g_4} a direct calculation shows that: 
\beq \label{eq:wedge2g4} 
 \wedge^2 g^{[3]}  \cdot \bbsm z_0 \\ z_1 \\ z_2 \\ z_3 \\ z_4 \\z_5 \besm  = \det(g) \bbm g^{[4]} \bbsm z_0 \\z_1 \\z_2 \\z_3 \\z_4 \besm \\ \det(g)^2 z_5 \bem.
 \eeq
We have thus established the following lemma: 
\begin{lem} \label{dec_lem} The vector space $\wedge^2_-(D_3 V)$ of anti-self-dual elements is the one-dimensional space $\hat \mP_0$ spanned by $E_5$
and the vector space $\wedge^2_+ (D_3 V)$ of self-dual elements  is the five-dimensional space  $\hat \mH$ spanned by $E_0, \dots, E_4$. As representations of $GL_2(F)$ we have
\[ \wedge^2_- (D_3 V) \simeq \otimes^3 (\wedge^2 V), \qquad \wedge^2_+ (D_3 V) \simeq    (D_4 V)  \otimes \wedge^2 V. \]
Together with the  decomposition $\wedge^2 D_3 V   =\wedge^2_+ (D_3 V)  \oplus \wedge^2_-(D_3 V) $ we get the  isomorphism of $GL_2(F)$-modules
\beq \label{eq:pleth} \wedge^2 D_3 V \simeq  (\wedge^2(V) \otimes   D_4 V)\; \oplus\; (\otimes^3 (\wedge^2 V) ). \eeq
\end{lem}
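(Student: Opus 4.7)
The first pair of assertions—that $\wedge^2_-(D_3V)$ is the one-dimensional line spanned by $E_5$ and that $\wedge^2_+(D_3V)$ is the five-dimensional space spanned by $E_0,\ldots,E_4$—falls out immediately once the change of basis \eqref{eq:p_z} is substituted into \eqref{eq:ast2}: in the $z$-coordinates the involution $\ast$ is simply $(z_0,\ldots,z_4,z_5)\mapsto (z_0,\ldots,z_4,-z_5)$, so the $\pm 1$ eigenspaces are exactly as claimed.

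The substantive content is the identification of each eigenspace as a $GL_2(F)$-module, which reduces to proving the matrix identity \eqref{eq:wedge2g4}. My plan is to derive this identity and then read off the two summands directly. Once \eqref{eq:wedge2g4} is established, the block-diagonal action exhibits the span of $E_0,\ldots,E_4$ as the tensor product $\wedge^2 V \otimes D_4 V$ (since $g$ acts there by $\det(g)\cdot g^{[4]}$, and $\det(g)$ is the action on the one-dimensional space $\wedge^2 V$), and the span of $E_5$ as $\otimes^3(\wedge^2 V)$ (since $g$ acts by the scalar $\det(g)^3$). The announced decomposition \eqref{eq:pleth} is then just the direct sum of these two summands.

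To prove \eqref{eq:wedge2g4}, the most economical route is to verify it on a set of generators for $GL_2(F)$, mirroring the strategy used for \eqref{eq:Am_equivariance}. I would check it on the diagonal matrices $\bbsm 1 & 0 \\ 0 & d \besm$, on the swap $\bbsm 0 & 1 \\ 1 & 0 \besm$, and on the lower-triangular generator $\bbsm 1 & 0 \\ c & 1 \besm$; each case is a finite calculation in which one computes the $6\times 6$ matrix $\wedge^2 g^{[3]}$ in the Pl\"ucker basis $B_{ij}$ from \eqref{eq:g_3}, conjugates by the change of basis \eqref{eq:p_z}, and confirms the block-diagonal structure with the asserted determinant twists. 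Alternatively, one may argue a priori that $\ast$ is $GL_2(F)$-equivariant up to a determinant twist (because $\Omega_3$ is a similitude by \eqref{eq:Am_equivariance} and $\Gamma$ is a scalar multiple of $\gamma_3\wedge\gamma_3$), so $\wedge^2_\pm(D_3V)$ are automatically $GL_2(F)$-subrepresentations; the specific twists in \eqref{eq:wedge2g4} are then pinned down by matching the action on a single well-chosen vector in each summand—for example $E_5$ itself for the one-dimensional piece, and $E_0=B_{01}$ for the five-dimensional piece.

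The only real obstacle is bookkeeping: the nontrivial mixing between $B_{03}$ and $B_{12}$ in the definitions of $E_2$ and $E_5$ is precisely what separates the two irreducible summands, and it must be tracked carefully through \eqref{eq:g_3}. Once this is executed, the representation-theoretic isomorphisms and the plethysm-type identity \eqref{eq:pleth} follow at once.
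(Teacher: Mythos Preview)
Your proposal is correct and follows essentially the same route as the paper: the eigenspace description is read off from \eqref{eq:ast2} in the $E$-basis, and the representation-theoretic identifications are deduced from the matrix identity \eqref{eq:wedge2g4}, which the paper establishes by a direct calculation with \eqref{eq:g_3} and \eqref{eq:g_4}. Your suggestion to verify \eqref{eq:wedge2g4} on generators of $GL_2(F)$ is a natural way to organize that calculation, and your alternative conceptual argument (using the similitude property of $\Omega_3$ to see a priori that $\ast$ commutes with the $GL_2(F)$-action) is a nice complement, though the paper does not make it explicit.
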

 We note, in particular,  that the $\ast$ commutes with the action of $GL_2(F)$ on $\wedge^2 D_3 V$. 
\begin{rem} \label{rem_pleth} The isomorphism \eqref{eq:pleth}  is the analogue for $GL_2(F)$ of the well-known `plethysm'  isomorphism  of $SL_2(\bC)$ modules   (\cite[eqn. 11.12]{Fulton-Harris}) :
\[  \wedge^2\text{Sym}^3 W \to \text{Sym}^4(W) \oplus \text{Sym}^0(W),\qquad  \text{where $W = \bC^2$}.\]  

We also remark that there is also a $GL_2(F)$-equivariant isomorphism $\wedge^2 D_3 V \to (\wedge^2 V) \otimes D_2(D_2 V)$ where $D_2(D_2 V) = \text{Sym}^2( \text{Sym}^2(V^*))^*$ is the vector space of symmetric bilinear  forms on $\text{Sym}^2(V^*)$. This is the $GL_2(F)$ analogue of  the isomorphism of $SL_2(\bC)$ modules   $\wedge^2 \text{Sym}^3 W \to \text{Sym}^2( \text{Sym}^2 W)$ for $W = \bC^2$ (\cite[p.160]{Fulton-Harris}). Under this isomorphism, the element of $\wedge^2 D_3 V$ with coordinates $(z_0, \dots,z_5)$ goes to the symmetric bilinear form whose matrix with respect to the basis $X^2,XY,Y^2$ is 
\beq \bbm z_0 & z_1 & z_2 +z_5\\ z_1 & z_2-\tfrac{z_5}{2} & z_3 \\ z_2+ z_5 & z_3 & z_4 \bem. \eeq
\end{rem}

We consider the $GL_2(F)$-equivariant linear map $\Omega_4: D_4 V \to \text{Sym}^4(V^*)$  (see Definition  \ref{polardual}) represented by the matrix  
\[A_4  = \bbsm &&&&1\\ &&&-4&\\&&6&&\\&-4&&&\\1&&&& \besm \]
 of \eqref{eq:Am}. This matrix is invertible when char$(F) \neq 2, 3$, and hence gives  a 
$PGL_2(F)$-equivariant  isomorphism $\Omega_4: \bP(D_4 V) \to \bP(\text{Sym}^4(V^*))$:  The linear map $\Omega_4^{-1}$ can be regarded as a symmetric bilinear form on  the vector space  $\text{Sym}^4(V^*)$ of binary quartic forms. For $g \in GL_2(F)$, we have:
\beq \label{eq:A4_equivariance} \Omega_4^{-1} (g \cdot f) = \det(g)^{-4} \Omega_4^{-1}(f). \eeq
 Let $I(f)$ denote the quadratic form associated with the bilinear form $\Omega_4^{-1}$ :  If $f$ is represented as 
\[ f(X,Y)=z_0 Y^4-4 z_1 Y^3X+6 z_2 Y^2X^2 -4 z_3YX^3 + z_4X^4, \] 
then clearly $I(f)$ is given by 
\beq \label{eq:I_invariant} I(f) = (z_0 z_4 – 4 z_1 z_3 + 3 z_2^2)/3. \eeq
For a  binary form $f$,  by a $GL_2(F)$-invariant $\mathcal I(f)$ of weight $r$ and degree $d$ we mean a homogeneous function of degree $d$ in the coefficients of $f$ satisfying:
\[ \mathcal I( g \cdot f) = \det(g)^{-r} \mathcal I(f),\quad \text{for all } g \in GL_2(F) \text{ (see \cite[Definition 2.20]{Olver})}.\] 
For binary quartic forms, it is well known (see \cite[2.29-2.30]{Olver}) that there are two fundamental $GL_2(F)$-invariants: the  \emph{apolar invariant} $I(f)$ above of weight $4$ and degree $2$, and the \emph{catalecticant} $J(f)$ which is of weight $6$ and degree $3$:
\beq \label{eq:J_invariant}
J(f)=   \det \bbm z_0 & z_1 & z_2 \\ z_1 & z_2 & z_3 \\z_2 & z_3 & z_4 \bem. 
\eeq
Note that,
\beq \label{eq:IJmodular}
I(g \cdot f) = \det(g)^{-4} I(f), \qquad J( g\cdot f) = \det(g)^{-6} J(f). \eeq

Consider the  following $PGL_2(F)$-equivariant  maps \begin{enumerate}
\item the projection from $\mP_0$ to $\mH$ restricted to $\mQ$, represented by $(z_0, \dots, z_5) \mapsto (z_0, \dots, z_4)$ with respect to  the basis $E_0, \dots, E_5$ of $\wedge^2 D_3 V$ and $E_0, \dots, E_4$ of $\mH$.

\item the isomorphism $\mH \to \bP(D_4 V)$ of Lemma \ref{dec_lem} given by the identity map $(z_0, \dots, z_4) \mapsto (z_0, \dots, z_4)$ with respect to the bases $E_0, \dots, E_4$ on $\mH$ and $e_1^{[4-i]}e_2^{[i]}, i=0,\dots,4$ on $D_4 V$.
\item the isomorphism $\Omega_4: \bP(D_4 V)  \to  \bP(\text{Sym}^4(V^*))$  given by the matrix $A_4$ with respect to the bases $e_1^{[4-i]}e_2^{[i]}$ and $X^{4-i}Y^i$  for $i=0,\dots,4$ on $D_4 V$ and $\text{Sym}^4(V^*)$, respectively.

\end{enumerate}
Composing these three maps, we get a $PGL_2(F)$-equivariant map $\pi:\mQ \to \bP (\text{Sym}^4(V^*))$.  For a line $L$ of $\bP(D_3 V)$ with Pl\"{u}cker coordinates $p_{ij}$ (with respect to the basis $B_{ij}$) and coordinates $(z_0, \dots, z_5)$ with respect to the basis $E_0, \dots, E_5$, it follows that $\pi(L)$ is the quartic form 
\[ f_{L}=z_0 Y^4-4 z_1 Y^3X+6 z_2 Y^2X^2 -4 z_3YX^3 + z_4X^4. \] 
 We recall that  a line $L$  intersects  the tangent line to $C$ at $\nu_3(s,t)$ if and only if the Pl\"{u}cker coordinates $p_{ij}$ of $L$ satisfy \eqref{eq:incidence}. Rewriting this equation in the coordinates $z_0, \dots, z_5$ we get 
\begin{gather*}
 f_L(s,t)= z_0 t^4 -4z_1 t^3s +  6 z_2 t^2s^2  -4z_3 ts^3 + z_4 s^4 =0, \\  I(f_L)=(z_0z_4-4z_1z_3 +3 z_2^2)/3=  z_5^2.
\end{gather*}
We summarize the above discussion in the next lemma.
\begin{lem} \label{lem_pi}
 Let $\mF^+  \subset \bP(\text{Sym}^4(V^*))$ denote the subset of quartic forms $f$ with $I(f)$ a square in $F$. There is a $PGL_2(F)$-equivariant $2$-sheeted covering map $\pi:\mQ \to \mF^+$, where for $f \in \mF^+$ given by  
\[f=z_0 Y^4-4 z_1 Y^3X+6 z_2 Y^2X^2 -4 z_3YX^3 + z_4X^4, \] 
the inverse image $\pi^{-1}(f)$ consists of the  lines $\{L, L^{\perp}\}$ whose  coordinates with respect to the basis $E_0, \dots, E_5$ are given by $(z_0,\dots,z_4, \pm \sqrt{I(f)})$. \\
The following conditions are equivalent for a  line $L$ of $\bP(D_3 V)$ with $f = \pi(L)$ and $v_f=  \Omega_4^{-1}(f)$
\begin{enumerate}
\item[i)] $L$  intersects  the tangent line to $C$ at $\nu_3(s,t)$,
\item[ii)] $f(s,t)=0$,
\item[iii)] $v_f$ lies on the osculating hyperplane to $C_4$ at $\nu_4(s,t)$.
\end{enumerate}
\end{lem}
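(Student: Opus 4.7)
The plan is to verify each assertion by assembling the machinery built up in Sections \ref{setup} and \ref{chFneq23}. The $PGL_2(F)$-equivariance of $\pi$ is automatic, since each of its three constituent maps — the projection from the $PGL_2$-fixed point $\mP_0$, the identification $\mH \simeq \bP(D_4 V)$ from Lemma \ref{dec_lem}, and the isomorphism $\Omega_4$ of Definition \ref{polardual} — is equivariant.

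To identify the image and its fibers, I would pull back the Pl\"{u}cker relation \eqref{plucker} along the linear change of coordinates \eqref{eq:p_z}. A direct substitution, using in particular $p_{03}p_{12} = 3(z_2+z_5)(z_2-z_5) = 3z_2^2 - 3z_5^2$, shows that $(z_0,\dots,z_5) \in \mQ$ is equivalent to
$$z_0 z_4 - 4 z_1 z_3 + 3 z_2^2 = 3 z_5^2, \qquad \text{i.e.,}\qquad I(f_L) = z_5^2.$$
Hence the image of $\pi$ is exactly $\mF^+$, and for each $f \in \mF^+$ the two elements of $\pi^{-1}(f)$ are obtained by the two sign choices $z_5 = \pm\sqrt{I(f)}$. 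These two sheets are swapped by the Hodge involution $\ast$, which negates $z_5$ and which, as established around \eqref{eq:ast2}, restricts on $\mQ$ to the polar-duality involution $L \mapsto L^{\perp}$; this yields the claim $\pi^{-1}(f) = \{L, L^{\perp}\}$.

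For the three equivalences, the strategy is to show that both (i) and (iii) reduce by direct substitution to the polynomial identity (ii), $f_L(s,t)=0$. Substituting \eqref{eq:p_z} into the incidence equation \eqref{eq:incidence} and using $p_{03}+3p_{12}=6z_2$ cancels the $z_5$ terms, leaving precisely $f_L(s,t)$, which gives (i) $\Leftrightarrow$ (ii). For (ii) $\Leftrightarrow$ (iii), note that $v_f = \Omega_4^{-1}(f) \in D_4 V$ has coordinates $(z_0,\dots,z_4)$ in the basis $e_1^{[4-i]}e_2^{[i]}$, as one checks by applying $A_4$ to this coordinate vector and recovering the coefficients of $f_L$. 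Since that basis is dual to $X^{4-i}Y^i$, pairing $v_f$ against the osculating hyperplane form $P_{(s,t)}(X,Y) = \sum_i (-1)^i \binom{4}{i} t^{4-i}s^i X^{4-i}Y^i$ produces $\sum_i (-1)^i \binom{4}{i} z_i\, t^{4-i} s^i = f_L(s,t)$, so vanishing of the pairing is again $f_L(s,t)=0$.

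The only real hazard is bookkeeping of sign conventions: $f_L$, the matrix $A_4$, and $P_{(s,t)}$ each carry alternating signs $(-1)^i \binom{4}{i}$, and one must verify that the three independent $(-1)^i$ factors collapse consistently. Once that is in place, the lemma reduces to the two single-line substitutions described above.
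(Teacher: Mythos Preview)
Your argument is correct and follows essentially the same route as the paper: the lemma is stated there as a summary of the preceding discussion, which consists precisely of substituting \eqref{eq:p_z} into the Pl\"{u}cker relation \eqref{plucker} to get $I(f_L)=z_5^2$ and into the incidence condition \eqref{eq:incidence} to get $f_L(s,t)=0$. Your explicit verification of (ii) $\Leftrightarrow$ (iii) via the pairing with $P_{(s,t)}$ is slightly more detailed than what the paper spells out, but the underlying computation is the same.
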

As an illustration of this lemma, we consider the  family of lines $\ell_\nu$ of $\bP(D_3 V)$ generated by  $(\nu B_1+B_3), B_0+B_2$ for $\nu \in \bF_q\setminus\{0, 1, 1/9\}$. This family of lines was treated in \cite{DMP2}. The lines $\ell_\nu$ have $f_{\ell_\nu}=\nu Y^4+(1-3 \nu) Y^2X^2+X^4$ for which $\sqrt{I(f)}=(3\nu+1)/6$ and 
\[ \sqrt{1 - \tfrac{1728}{\jmath(f)}} =  \tfrac{ (3\nu+1)^3}{9 (3\nu-1) (\nu-(2+\sqrt{3})^2)(\nu-(2-\sqrt{3})^2)}. \]

Figure \ref{maps} is a diagram showing  the sequence of maps involved, starting with the set of lines of $\bP(D_3(V))$ and ending with the set of binary quartic forms whose apolar invariant is a square.
\begin{figure}[h]
\centering
\begin{tikzpicture}[
  node distance=3cm and 3.5cm,
  every node/.style={align=center},
  box/.style={draw, rectangle, rounded corners, minimum width=3.5cm, minimum height=1cm},
  ->, >=stealth
]
\node[box]{All projective spaces carry an action of $PGL(V)$  where $V=F^2$ and all maps\\ between projective spaces are $PGL(V)$-equivariant.};
\end{tikzpicture}\\ \vspace{0.5cm}

\begin{tikzpicture}[
  node distance=3cm and 3.5cm,
  every node/.style={align=center},
  box/.style={draw, rectangle, rounded corners, minimum width=3.5cm, minimum height=1cm},
  ->, >=stealth
]
\node[box] (1) {Projective line $\bP(V)$};
\node[box, right=of 1] (2) {Twisted cubic $C$ of $\bP(D_3V)$};
\draw[->, thick] (1) --node[above] {Veronese map $\nu_3$} (2);
\end{tikzpicture}
\\ \vspace{0.5cm}

\begin{tikzpicture}[
  node distance=3cm and 3cm,
  every node/.style={align=center},
  box/.style={draw, rectangle, rounded corners, minimum width=3.5cm, minimum height=1cm},
  ->, >=stealth
]
\node[box] (line) {Lines of $\bP(D_3V)$ w.r.t \\ the twisted cubic $C$};
\node[box, right=of line] (Klein) {Klein quadric $\mQ$ in $\bP(\wedge^2 D_3V)$ \\containing the RNC   $C_4$ rep-\\resenting the tangent lines of $C$};
\node[box, below=of Klein] (projection) {Hyperplane $\mH$ spanned by $C_4$};
\node[box, below=of projection] (I) {$\bP(D_4 V)$};
\node[box, left=of I] (A4) {$\mF^+ \subset \bP(\text{Sym}^4(V^*))$ the subset of \\ binary quartic forms whose\\apolar invariant is a square};
\draw[->, thick] (Klein) -- node[right] {$\pi$ } node[left] {Projection of $\mQ$ from $\mP_0$ to  $\mH$\\
where $\mH=\bP(\wedge^2_+ (D_3V))$ and $\mP_0=\bP(\wedge^2_- (D_3V))$\\ and $\wedge^2 D_3V=\wedge^2_+ (D_3V) \oplus \wedge^2_- (D_3V)$ is the \\ 
decomposition into the selfdual and anti-selfdual\\ eigenspaces of the Hodge $\ast$ operator}(projection);
\draw[->, thick] (projection) -- node[right] {} node[left]{$PGL(V)$-equivariant  isomorphism  between \\ $\mH$ and $\bP(D_4V)$ coming from a $GL(V)$-\\equivariant isomorphism $\wedge^2_+ (D_3V) \simeq (D_4 V)  \otimes \wedge^2 V$ } (I);
\draw[->, thick] (I) -- node[above] {$\Omega_4$} node[below]{\\ \\  \\$PGL(V)$-equivariant  isomorphism  between \\ $\bP(D_4V)$ and $\bP(\text{Sym}^4(V^*))$ coming from the\\  
 bilinear form
(polarity) $\Omega_4$ induced by $C_4$}(A4);
\draw[->, thick] (line) -- node[above] {Pl\"ucker map } (Klein);
\draw[dashed,->,  thick] (line) to[out=225, in=135, looseness=1.5] (A4);
\end{tikzpicture}
\caption{Sequence of maps involved }\label{maps}
\end{figure}

 We end this section with the following geometric characterization of the vanishing of $J(f)$  (see \cite[Proposition 9.7]{Harris}): The catalecticant $J(f)$ vanishes if and only if $v_f=\Omega_4^{-1}(f)$ lies on the secant variety of the rational normal curve $C_4$. The points $v_f$ for which both  $I(f)=J(f)=0$ are the points on the tangent lines of $C_4$, and the points $v_f$ for which $J(f)=0$ but $I(f) \neq 0$ are the points not on $C_4$ lying on some secant line of $C_4$. The  points $v_f$ for which $I(f)=0$ but $J(f) \neq 0$ can also be characterized  geometrically: these points lie  on the intersection of osculating hyperplanes at $4$ distinct points $\nu_4(s_i,t_i),  i=1,2,3,4$ of $C_4$ with the property that the $\jmath$-invariant of 
 the $4$ points $(s_i,t_i)$ of $\bP^1$ is zero (the set of these $4$ points is `equianharmonic' in classical terminology).

 \subsection{Remarks on Atiyah's note \cite{Atiyah}} \label{Atiyah_note}
 As mentioned in \S \ref{introduction},  our geometric framework for studying the lines of $\bP^3$ w.r.t. the twisted cubic, is inherent in Atiyah's  one-page note \emph{A note on the tangents of a twisted cubic}, which is   divided into  the following $5$ terse points. In  particular, the $5$-th point uses the idea of studying a line $\ell$ of $\bP^3$ in terms of the 
 $\jmath$-invariant of the $4$ points $p_1, \dots, p_4$ of $\bP^1$ such that $\nu_3(p_i)=P_i$, where  $P_1, \dots, P_4$ are the $4$ points of $C_3$ whose tangent lines meet $\ell$.
  \begin{enumerate}
     \item It is  observed that the tangent lines to $C_3$ are represented by the curve $C_4$ lying on the Klein quadric $\mQ$.
     \item It is noted that $\mQ_0=\mQ \cap \mH$ (where  hyperplane $\mH$ is the hyperplane of $\bP^5$ spanned by $C_4$) must be the tangential variety of $C_4$, which is also the quadric threefold  associated  with the polarity on $\mH$ induced by $C_4$  [In terms of our coordinate system $(z_0, \dots, z_4)$, $\mQ_0$ is given by the locus $z_0z_4-4 z_1z_3 +3z_2^2=0$, and the quadratic form on $\mH$ associated to the polarity induced by $C_4$ is   $z_0z_4-4 z_1z_3 +3z_2^2$]. 
     \item It is observed that the chords of $C_3$ are represented by a Veronese surface $V$ contained in $\mQ$, and that the  projection from $\mP_0$ to $\mH$  carries the Veronese surface $V$ to the locus $U$ of points of intersection of osculating planes of $C_4$. [In our work this result can be seen as follows: a chord $L$  joining points $P, Q$ of $C_3$ intersects the tangent lines to $C_3$ at $P$ and $Q$ with multiplicity $2$, and does not intersect any other tangent line. So the quartic form $f(X,Y)=\pi(L)$ associated with $L$ has exactly two roots of multiplicity $2$ each. As shown in Lemma   \ref{lemDelta=0} (orbits 3) and 4)), such quartic forms correspond to points of intersection of osculating planes of $C_4$.]
     \item  A geometrical proof is given of the well-known result that  a necessary and  sufficient condition for four points on $C_4$ to be equianharmonic is that the pole of the $3$-space determined by them should lie on $\mQ_0$ [In our work this result can be seen as follows: in the next section we show that the $\jmath$-invariant of $4$  points $(s_i, t_i),  i=1,\dots,4$ of $\bP^1$ equals zero if and only if the apolar invariant $I(f)$ equals zero for the quartic form $f(X,Y)=\prod_{i=1}^4(Xt_i-Ys_i)=a_0 Y^4
     -4 a_1 Y^3X+6 a_2 Y^2X^2 -4 a_3YX^3 + a_4X^4$. Also, the pole of the $3$-space spanned by the points $P_i, i=1,\dots,4$ of $C_4$ given by 
     $(z_0, \dots, z_4)=(s_i^4,s_i^3t_i, s_i^2t_i^2, s_it_i^3,t_i^4)$ is precisely  $(a_0, \dots, a_4)$, and hence the result follows.] 
     \item 	The last result has the interpretation that   the tangents at four points of $C=C_3$ have a unique transversal if and only if, the four points are equianharmonic. [In our work this result can be seen (see Lemma \ref{lem_pi}) as follows: there is a unique line $L$ of $\bP^3$ which meets the tangent lines at given $4$ distinct points $(s_i^3,s_i^2t_i, s_it_i^2,t_i^3), i=1,\dots,4$ of $C_3$, if and only if $L=L^\perp$, i.e. $I(f_L)=0$ for the quartic form $f_L$ associated to $L$, or equivalently the $\jmath$-invariant of the $4$ roots $(s_i,t_i), i=1,\dots,4$ of $f_L$ is zero.]
 \end{enumerate}
We end this remark with a dictionary between our notation and Atiyah's:
  The projective space $\bP^n$ is denoted $S_n$ and the degree $n$ rational normal curve is denoted $C_n \hookrightarrow S_n$. The  Klein quadric $\mQ$ in $\bP^5$ representing lines of $\bP^3$ is denoted  $\Omega \hookrightarrow S_5$. The hyperplane $\mH$ spanned by $C_4$ is denoted  $\Sigma$. The intersection $\mQ_0=\mQ \cap \mH$ is denoted $\Omega_1=\Sigma \cap \Omega$,  and also as $I$. The point $\mP_0$ is denoted $L$ and defined to be the polar dual of the hyperplane $\Sigma$  with respect to the polarity on $S_5$ induced by the quadric $\Omega$.

\section{Binary quartic forms over $F$: invariants and $PGL_2(F)$ action} \label{sec4}
In this section, we obtain some results about  $PGL_2(F)$-orbits on binary quartic forms,  which will be used in the next section.  The form 
\[ f(X,Y)=z_0 Y^4-4 z_1 Y^3X+6 z_2 Y^2X^2 -4 z_3YX^3 + z_4X^4 \]
has repeated factors if and only if its discriminant $\Delta(f)=0$, where 
\beq \label{eq:disc} \Delta(f) =I(f)^3-J(f)^2, \eeq
(see \cite[p.29]{Olver}). It follows from \eqref{eq:IJmodular} that the rational function $\jmath(f)$ of degree $6$ in the coefficients of $f$  defined by
\beq \label{eq:jfdef}  
 \frac{\jmath(f)}{\jmath(f)-1728}=\frac{I(f)^3}{J(f)^2},  \eeq
being a ratio of invariants of weight $12$, has weight $0$, and hence is an  `absolute invariant' of $f$. We will show that $\jmath(f)$ equals the $\jmath$-invariant of the set of $4$ roots $(s_i,t_i)$ of 
\[ f(X,Y) = \prod_{i=1}^4(X t_i-Y s_i), \] 
(also see  \cite[Theorem 1.11.1]{Hirschfeld1}).  We use the standard identification of  $\bP^1(F)$  with the projective line $F \cup \{\infty \}$ where $(s,t) \in \bP^1(F)$ is identified with $t/s \in F \cup \{ \infty\}$.  We recall that  the cross-ratio $(P_1, P_2;P_3,P_4)$ of $4$ distinct and ordered points $(P_1, P_2, P_3, P_4)$ of $\bP^1(F)$ is
\beq \label{eq:crossratiodef} \lambda = (P_1, P_2;P_3,P_4)=  \frac{(P_3-P_1)(P_4-P_2)}{(P_3-P_2)(P_4-P_1)}. \eeq
 
The $\jmath$-invariant of an unordered set $\{P_1, P_2, P_3, P_4\}$ is given by the following two equivalent definitions:
\beq \label{eq:jdef}\frac{\jmath(\lambda)}{\jmath(\lambda) - 1728}= \frac{(\lambda^2-\lambda+1)^3}{\left((\lambda+1)(\lambda-2)(\lambda-\tfrac{1}{2}) \right)^2}, \qquad \jmath(\lambda) = \frac{256 (\lambda^2 - \lambda+1)^3}{\lambda^2 (\lambda-1)^2}. \eeq
We record the next two lemmas for easy reference (see, for example,  \cite[\S 6.1, \S 1.11]{Hirschfeld1}). First, we recall that the \emph{anharmonic group} $G_*$ is the subgroup of $PGL_2(F)$ given by 
\[G_*=\{ t \mapsto t^{\pm 1}, 1 - t^{\pm 1}, 1/(1-t^{\pm 1}) \}. \]
It is isomorphic to the symmetric group $S_3$ and is generated by the involution  $t \mapsto t^{-1}$ and the order $3$ element  $t \mapsto 1/(1-t)$.
\begin{rem}
    For the rest of the paper, by $\sigma_1, \dots, \sigma_4$ we mean the following permutations of the symmetric group $S_4$:
    \[ \sigma_1=(12)(34), \; \sigma_2=(13)(24),\;  \sigma_3=(12), \text{ and }\,  \sigma_4=(234).\]
\end{rem}
\begin{lem} \label{cross} The cross-ratio  $(P_1, P_2;P_3,P_4)$ gives a bijection between $F\setminus\{0,1\}$ and the set of  $PGL_2(F)$-orbits of ordered $4$-tuples $(P_1, \dots, P_4)$ of distinct points of the projective line over $F$. \\
The set of cross-ratios associated to the unordered set $\{P_1, P_2, P_3, P_4\}$  consists of the orbit of $\lambda = (P_1, P_2;P_3,P_4) $  under the  anharmonic group $G_*$. 
The homomorphism 
\[ \rho:S_4 \to G_*, \quad \rho(\sigma)( (P_1, P_2;P_3,P_4) )= (P_{\sigma(1)}, P_{\sigma(2)};P_{\sigma(3)},P_{\sigma(4)}), \]
 has kernel the normal subgroup $\langle \sigma_1, \sigma_2 \rangle \simeq \bZ/2 \bZ  \times \bZ/2 \bZ$. The quotient group $S_4/\mathrm{ker}(\rho)$  is isomorphic to $G_*$, and $\rho$ sends  the transposition $\sigma_3$ and the $3$-cycle $\sigma_4$  to  the involution  $\lambda \mapsto \lambda^{-1}$ and the order $3$ element  $\lambda \mapsto 1/(1-\lambda)$, respectively.
\end{lem}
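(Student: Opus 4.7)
The strategy is to leverage the sharply $3$-transitive action of $PGL_2(F)$ on $\bP^1(F)$ together with a direct computation of the cross-ratio under the natural $S_4$-action on ordered $4$-tuples.

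\textbf{Step 1 (bijection via a normal form).} I would first verify, by a short substitution in \eqref{eq:crossratiodef}, that the cross-ratio is $PGL_2(F)$-invariant: for any $g \in PGL_2(F)$, one has $(g P_1, g P_2; g P_3, g P_4) = (P_1, P_2; P_3, P_4)$. Since $PGL_2(F)$ acts sharply $3$-transitively on $\bP^1(F)$, given an ordered $4$-tuple $(P_1, P_2, P_3, P_4)$ of distinct points there is a unique $g \in PGL_2(F)$ with $(gP_1, gP_2, gP_3) = (\infty, 0, 1)$, whence the orbit is determined by $t := gP_4 \in \bP^1(F) \setminus \{\infty,0,1\} = F \setminus \{0,1\}$. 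A direct computation using the limiting form of \eqref{eq:crossratiodef} as $P_1 \to \infty$ gives $(\infty, 0;1,t) = t$. Thus the cross-ratio descends to a well-defined map from $PGL_2(F)$-orbits to $F \setminus \{0,1\}$, and the above shows it is a bijection.

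\textbf{Step 2 (the homomorphism $\rho$).} For each $\sigma \in S_4$, the tuple $(P_{\sigma(1)}, \dots, P_{\sigma(4)})$ lies in some $PGL_2(F)$-orbit, and by Step 1 its cross-ratio is a rational function of $\lambda = (P_1, P_2; P_3, P_4)$. This defines $\rho(\sigma) \in \mathrm{Aut}(F\setminus\{0,1\})$, and $\rho$ is a homomorphism because composing permutations composes the corresponding Möbius-type transformations of $\lambda$. To identify $\rho$, I would compute on the two generators. For $\sigma_3 = (12)$, a short substitution gives
\[ (P_2, P_1; P_3, P_4) = \frac{(P_3-P_2)(P_4-P_1)}{(P_3-P_1)(P_4-P_2)} = \lambda^{-1}. \]
For $\sigma_4 = (234)$, evaluating at the normal form $(P_1,P_2,P_3,P_4) = (\infty,0,1,\lambda)$ via $(P_1, P_3; P_4, P_2) = (\infty, 1; \lambda, 0)$ yields $1/(1-\lambda)$. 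Hence $\rho(\sigma_3)$ and $\rho(\sigma_4)$ are the stated involution and order-$3$ element generating the anharmonic group $G_*$, so $\rho$ is surjective onto $G_*$.

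\textbf{Step 3 (computing the kernel).} The Klein four-subgroup $K = \langle \sigma_1, \sigma_2 \rangle$ of $S_4$ consists of $\{e, (12)(34), (13)(24), (14)(23)\}$. A direct symmetry check of \eqref{eq:crossratiodef} shows
\[ (P_2,P_1;P_4,P_3) = (P_3,P_4;P_1,P_2) = (P_4,P_3;P_2,P_1) = \lambda, \]
so $K \subseteq \ker \rho$. Since $|S_4|/|G_*| = 24/6 = 4 = |K|$ and $\rho$ is surjective, we conclude $\ker \rho = K$, and $S_4/K \simeq G_*$. Finally, the description of the $G_*$-orbit as the full set of cross-ratios of the unordered $\{P_1,\dots,P_4\}$ is then immediate: the $24$ orderings give $24/4 = 6$ distinct cross-ratios (generically), forming precisely the $G_*$-orbit of $\lambda$.

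I expect no real obstacle here; the content is essentially bookkeeping with \eqref{eq:crossratiodef} and the $3$-transitivity of $PGL_2(F)$. The only mildly delicate point is handling the normal form at $P_1 = \infty$ consistently under the limiting convention for $\lambda$, which I would settle once at the beginning and then use throughout.
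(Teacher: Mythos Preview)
Your proposal is correct and follows the standard textbook argument. The paper does not supply its own proof of this lemma; it merely records it ``for easy reference'' with a citation to \cite[\S 6.1, \S 1.11]{Hirschfeld1}, so there is nothing to compare against beyond noting that your computations of $\rho(\sigma_3)$, $\rho(\sigma_4)$, and the kernel match the claims in the statement.
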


\begin{lem} \label{jmath_lem}
The function $\jmath(\lambda)$ gives a bijection between the image $\jmath(F\setminus\{0,1\})$ and  the set of $G_*$-orbits on $F \setminus \{0,1\}$.  Each of these $G_*$ orbits has size $6$ with the following two exceptions:
  \beq \label{eq:j_0_1728} \jmath^{-1}(0) = \{\lambda: \lambda - \lambda^2=1\}, \quad  \jmath^{-1}(1728) = \{-1,2,1/2\}. \eeq
\end{lem}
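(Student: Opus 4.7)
The plan is to prove three things in sequence: first, $\jmath$ is $G_*$-invariant on $F\setminus\{0,1\}$ (so it descends to a function on orbits); second, its fibers coincide with $G_*$-orbits (so the induced map on orbits is a bijection onto its image); third, the only orbits of size less than $6$ are the two described.

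For invariance, it suffices to check $\jmath(\lambda)=\jmath(\lambda^{-1})=\jmath(1-\lambda)$ by direct substitution, since $\lambda\mapsto\lambda^{-1}$ and $\lambda\mapsto 1-\lambda$ generate $G_*$. Both substitutions preserve $(\lambda^2-\lambda+1)^3$ and $\lambda^2(\lambda-1)^2$ up to the same scalar factor (for instance, under $\lambda\mapsto \lambda^{-1}$, the numerator becomes $(1-\lambda+\lambda^2)^3/\lambda^6$ and the denominator becomes $(1-\lambda)^2/\lambda^4 \cdot \lambda^{-2}$, giving the same ratio).

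For the fiber-orbit identification, fix $\lambda_0 \in F\setminus\{0,1\}$ and set $c=\jmath(\lambda_0)$. The fiber $\jmath^{-1}(c)$ is the vanishing locus in $F\setminus\{0,1\}$ of the degree-$6$ polynomial
\[ P_c(\lambda) \;=\; 256(\lambda^2-\lambda+1)^3 \;-\; c\,\lambda^2(\lambda-1)^2. \]
By invariance, every element of $G_*\cdot\lambda_0$ is a root. I claim $P_c(\lambda)=256\prod_{g\in G_*}(\lambda-g\cdot\lambda_0)$, forcing the fiber to equal the orbit. The identity holds because both sides are degree-$6$ polynomials with the same leading coefficient, and at an orbit element $\mu$ whose stabilizer has order $m$, invariance of $\jmath$ together with the character of the stabilizer acting on the tangent line at $\mu$ forces $P_c$ to vanish to order at least $m$ at $\mu$. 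A total-degree count ($km\cdot$ (orbit size) $=km=6$) shows each multiplicity is exact, proving the factorization and hence the bijection of orbits with $\jmath(F\setminus\{0,1\})$.

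Finally, to identify the small orbits I solve the fixed-point equations for each non-identity $g\in G_*$: the equation $\lambda=\lambda^{-1}$ gives $\lambda=-1$; $\lambda=1-\lambda$ gives $\lambda=1/2$; $\lambda=\lambda/(\lambda-1)$ gives $\lambda=2$; and both $\lambda=1/(1-\lambda)$ and $\lambda=(\lambda-1)/\lambda$ give $\lambda^2-\lambda+1=0$. The first three points lie in the single orbit $\{-1,2,1/2\}$ of size $3$, on which $\jmath$ evaluates to $256\cdot 27/4=1728$. The last yields an orbit of size $\leq 2$ on which the numerator $(\lambda^2-\lambda+1)^3$ vanishes, giving $\jmath=0$. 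All other orbits have trivial stabilizer and hence size $6$.

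The main obstacle is the multiplicity analysis in step two: showing that each orbit element appears in $P_c$ with multiplicity equal to its $G_*$-stabilizer size. Once this local-to-global statement is established, everything else reduces to routine substitutions. An alternative, more algebraic route would be to prove the polynomial identity $P_{\jmath(\lambda_0)}(\lambda)=256\prod_{g\in G_*}(\lambda-g\cdot\lambda_0)$ directly by expanding both sides as polynomials in $\lambda$ with coefficients in $F[\lambda_0,\lambda_0^{-1},(\lambda_0-1)^{-1}]$ and matching, bypassing the multiplicity argument altogether.
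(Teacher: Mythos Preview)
The paper does not prove this lemma; it records it for reference, citing \cite[\S 6.1, \S 1.11]{Hirschfeld1}. So there is no approach to compare against, and your proposal stands on its own.

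Your argument is correct. The invariance check and the fixed-point analysis in steps one and three are routine and accurate. The substance is step two, and your multiplicity argument is valid under the paper's standing hypothesis $\mathrm{char}(F)\neq 2,3$: the stabilizer $S_\mu\subset G_*$ at any $\mu\in F\setminus\{0,1\}$ is cyclic of order $m\in\{1,2,3\}$ coprime to the characteristic, hence linearizable at $\mu$, so the invariants in the completed local ring are generated by $(\lambda-\mu)^m$, and the $G_*$-invariant function $\jmath-c$ (equivalently $P_c$, since $\lambda^2(\lambda-1)^2$ is a unit at $\mu$) vanishes to order divisible by $m$. The degree count then forces the factorization. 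Two small points of presentation: the expression ``$km\cdot\text{(orbit size)}=km=6$'' appears to be a slip for ``$m\cdot(6/m)=6$''; and it is worth noting explicitly that $P_c(0)=P_c(1)=256\neq 0$, so no roots are lost at the excluded points. The direct-expansion alternative you mention would also work and has the advantage of being characteristic-free as a formal polynomial identity.
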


The  cubic resolvent $R_f(X,Y)$  of $f$ is defined as follows. We assume $\Delta(f) \neq 0$: 
\beq \label{eq:resolvent} R_f(X,Y) =-4 Y^3 +3 I(f) YX^2 -J(f) X^3.  \eeq
It follows from the definitions of $I(f)$ and $J(f)$ that   $I(f) = I(XR_f)$ and $J(f)=J(X R_f)$.  Moreover, let $K \supset F$ be an extension field of $F$ such that $f$ has a linear factor $(Xt-Ys)$ over $K$.  We will show that there exists $g \in GL_2(K)$  such that $g \cdot f = \det(g)^{-2} X R_f(X,Y)$. If  $(s,t) \neq (0,1)$, then $X$ is a factor of $g_1 \cdot f$, where  $g_1=\bbsm  t/s &-1 \\ 1 &0 \besm \in SL_2(K)$. If  $(s,t)=(0,1)$ then $X$ is already a  factor of $f$ and we take $g_1 = \bbsm  1 &0 \\ 0 &1 \besm$. We consider $f_1 = g_1 \cdot f$. We can write  
\[ f_1(X,Y)=X(-4 \delta Y^3 +6 \alpha_1 XY^2 -4 \alpha_2  YX^2 +\alpha_3 X^3),\]
 for some $\alpha_1, \alpha_2, \alpha_3, \delta \in K$. Since $f$ and hence $f_1$ has no repeated factors, we know $\delta \neq 0$. For $g_2 = \bbsm  1 & 0 \\ -\frac{\alpha_1}{2\delta} & 1 \besm \in SL_2(K)$, we have  
\[ f_2 = g_2 \cdot f_1 = \delta X(-4 Y^3  +3 \zeta  YX^2 -\nu X^3), \] for some $\zeta, \nu \in K$. We note that $I(f_2) = \delta^2 \zeta$ and $J(f_2) =  \delta^3 \nu$. Since $g_1, g_2 \in SL_2(K)$, we get  (using  \eqref{eq:IJmodular}) that $I(f) = I(f_2)$ and $J(f) = J(f_2)$.  Thus $f_2 = \delta X(-4Y^3+ 3 I(f) \delta^{-2} X^2 Y- J(f) \delta^{-3} X^3)$. Finally, for $g_3=\bbsm \delta^{-1} & 0\\0&1 \besm$ we get
 $f_3=g_3 \cdot f_2 = \delta^2 X R_f(X,Y)$. Writing $g = g_3g_2g_1 \in GL_2(K)$ with $\det(g)=\delta^{-1}$,  we see
 \[ g \cdot f = \det(g)^{-2} X R_f(X,Y). \]

 If $u_1, u_2, u_3$ are the three roots of $R_f = -4 \prod_{i=1}^3(Y - u_i X)$, then the cross-ratio $\lambda=(\infty,u_1;u_2,u_3)$ equals $(u_3-u_1)/(u_2-u_1)$.  Using this in \eqref{eq:jdef} together with the fact that $\sum u_i=0$, $\sum u_iu_j= -3I(f)/4$ and $u_1u_2u_3=-J(f)/4$  gives 
\[ \frac{\jmath(\lambda) }{ \jmath(\lambda)-1728}  = \frac{I^3(f)}{J^2(f)}. \]  
Comparing this with \eqref{eq:jfdef} we see that $\jmath(f)  = \jmath(\lambda)$. Thus, $\jmath(f)$ is the same as  the  $\jmath$-invariant of the roots of $XR_f(X,Y)$, which in turn is the same as the $\jmath$-invariant of the roots of $f(X,Y)$.  (We remark that $\jmath(\lambda)$ is also the $\jmath$-invariant of the elliptic curve $Z^2X = R_f(X,Y)$, which can be rewritten in the standard form  $z^2=4 w^3 -\mathfrak{g}_2 w -\mathfrak{g}_3$ for $z = Z/X, w = -Y/X$ and $\mathfrak{g}_2=3 I(f), \mathfrak{g}_3=J(f)$). \\

 It will be useful to record here some more  subgroups of $S_4$:
\begin{align} \label{eq:g_i_def}
 \langle \sigma_1, \sigma_2, \sigma_4 \rangle   \simeq A_4, \quad   \langle \sigma_1, \sigma_2, \sigma_3  \rangle& \simeq  D_4, \quad \langle \sigma_1, \sigma_3 \rangle   \simeq \bZ/2 \bZ  \times \bZ/2 \bZ,  \\
\nonumber  \langle \sigma_3 \sigma_2 \rangle =\langle (1324)\rangle &\simeq \bZ /4 \bZ, \end{align}
 where $\bZ/m \bZ$ denotes the cyclic group of order $m$,  $D_4$ the dihedral group of size $8$, and $A_4$ the alternating group of size $12$.   For later use, we calculate the centralizer subgroup $Z_{S_4}(g)=\{h \in S_4: hg=gh\}$ of some elements of $S_4$:
\begin{align}  \label{eq:centralizer}
\nonumber Z_ {S_4}(\sigma_1) &=\langle \sigma_1, \sigma_2, \sigma_3  \rangle, \\
\nonumber Z_ {S_4}(\sigma_1 \sigma_3) &= \langle \sigma_1, \sigma_3  \rangle, \\
Z_ {S_4}(\sigma_4) &= \langle \sigma_4 \rangle, \\
\nonumber Z_ {S_4}(\sigma_3 \sigma_2)&= \langle \sigma_3 \sigma_2  \rangle.
\end{align}
We will now determine the $PGL_2(F)$-orbits of  sets $\{P_1, \dots, P_4\}$ of $4$ distinct unordered
points of the projective line over $F$. 
\begin{lem} \label{stabilizer_lemma} 
The $\jmath$-invariant gives a bijection between the set  $\jmath(F \setminus\{0,1\})\subset F$ and the $PGL_2(F)$-orbits of unordered $4$-tuples of distinct points of $F \cup \{\infty\}$. The stabilizer of an orbit is isomorphic to $A_4$, $D_4$ or $ \bZ/2 \bZ  \times \bZ /2 \bZ$,  according as the $\jmath$-invariant of the orbit equals $0, 1728$ or is different from $0, 1728$.\\
In more detail, let   $( (s_1,t_1),\dots, (s_4,t_4) )$ be  an ordered set of $4$  distinct points  of the projective line over $F$ with cross-ratio $\lambda_0$ and let 
$f(X,Y)=\prod_{i=1}^4(Ys_i - X t_i)$.  Let $\mathrm{Stab}_F(f)$ denote the stabilizer of $f(X,Y)$ in $PGL_2(F)$. The permutation representation of $\mathrm{ Stab}_F(f)$ on the roots $( (s_1,t_1),\dots, (s_4,t_4) )$ gives an injective homomorphism $\sigma: \mathrm{ Stab}_F(f) \to S_4$ mapping $g \mapsto \sigma_g$. The group $\mathrm{Stab}_F(f)$ is isomorphic via this homomorphism to: 
\begin{enumerate}
\item $ \langle \sigma_1, \sigma_2 \rangle \simeq  \bZ/2 \bZ  \times \bZ /2 \bZ$ if $\jmath(f) \neq 0, 1728$.
\item $ \langle \sigma_1, \sigma_2, \tilde \sigma_3  \rangle \simeq D_4$ if $\jmath(f) =1728$. Here, $\tilde \sigma_3$ equals $(12), (13), (14)$ according as $\lambda_0=-1,2,1/2$, respectively.
\item $\langle \sigma_1, \sigma_2, \sigma_4 \rangle   \simeq A_4$ if $\jmath(f) =0$.
\end{enumerate}
\end{lem}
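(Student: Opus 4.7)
The plan is to deduce both statements from Lemmas \ref{cross} and \ref{jmath_lem} via the homomorphism $\rho$ of Lemma \ref{cross}. For the bijection, an unordered $4$-tuple is an $S_4$-orbit of an ordered $4$-tuple under the permutation action on positions. By Lemma \ref{cross}, the $PGL_2(F)$-orbits of ordered $4$-tuples are parameterized by $F\setminus\{0,1\}$ via the cross-ratio, and the induced $S_4$-action on this parameter space factors through $\rho$ as the $G_*$-action. Hence $PGL_2(F)$-orbits of unordered $4$-tuples are in bijection with $G_*$-orbits on $F\setminus\{0,1\}$, which by Lemma \ref{jmath_lem} are in bijection with $\jmath(F\setminus\{0,1\})$ via the $\jmath$-invariant.

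For the stabilizer description, I would first note that $\sigma\colon \mathrm{Stab}_F(f) \to S_4$ is injective because any element of $PGL_2(F)$ is determined by its action on three distinct points of $\bP^1(F)$. The key observation is that $\sigma \in S_4$ lies in $\sigma(\mathrm{Stab}_F(f))$ if and only if $\rho(\sigma)(\lambda_0) = \lambda_0$: one direction is immediate from $PGL_2(F)$-invariance of the cross-ratio, and for the converse the unique $g \in PGL_2(F)$ sending $(P_1,P_2,P_3)$ to $(P_{\sigma(1)},P_{\sigma(2)},P_{\sigma(3)})$ automatically sends $P_4$ to $P_{\sigma(4)}$, again by cross-ratio invariance. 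Therefore $\sigma(\mathrm{Stab}_F(f)) = \rho^{-1}(\mathrm{Stab}_{G_*}(\lambda_0))$, and the isomorphism class of $\mathrm{Stab}_F(f)$ is determined by the $G_*$-stabilizer of $\lambda_0$.

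The three cases then follow by combining this with Lemma \ref{jmath_lem}. When $\jmath(f) \neq 0, 1728$, the $G_*$-orbit of $\lambda_0$ has size $6$ and its stabilizer is trivial, giving $\ker\rho = \langle\sigma_1,\sigma_2\rangle \simeq \bZ/2\bZ \times \bZ/2\bZ$. When $\jmath(f) = 0$, the $G_*$-stabilizer is $\langle\lambda \mapsto 1/(1-\lambda)\rangle = \rho(\langle\sigma_4\rangle)$ by Lemma \ref{cross}, so the preimage $\langle\sigma_1,\sigma_2,\sigma_4\rangle$ has order $12$ and contains a $3$-cycle, forcing it to be $A_4$ (the unique subgroup of $S_4$ of that order). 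When $\jmath(f) = 1728$, then $\lambda_0 \in \{-1, 2, 1/2\}$ and the $G_*$-stabilizer has order $2$; a direct computation from the definition of $\rho$ gives $\rho((12))(\lambda)=\lambda^{-1}$, $\rho((13))(\lambda)=\lambda/(\lambda-1)$, $\rho((14))(\lambda)=1-\lambda$, which fix $-1$, $2$, $1/2$ respectively and thereby identify $\tilde\sigma_3$; the preimage $\langle\sigma_1,\sigma_2,\tilde\sigma_3\rangle$ has order $8$, hence is a Sylow $2$-subgroup of $S_4$ and thus isomorphic to $D_4$. The main obstacle is precisely the bookkeeping in this $\jmath=1728$ case---deciding which transposition in $S_4$ realizes the correct involution of $G_*$ fixing each of $\lambda_0 \in \{-1, 2, 1/2\}$---but this reduces to three short cross-ratio computations using the definition of $\rho$.
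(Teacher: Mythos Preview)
Your proof is correct and rests on the same ingredients as the paper's---Lemmas~\ref{cross} and~\ref{jmath_lem} together with the homomorphism $\rho$---but you organize the stabilizer computation more cleanly. The paper argues element by element: it explicitly constructs $h_1, h_2 \in PGL_2(F)$ realizing $\sigma_1, \sigma_2$, then separately builds candidate elements $h_3, h_4$ and checks by cross-ratio computations for which $\lambda_0$ each one actually lands in $\mathrm{Stab}_F(f)$. Your route is to prove once and for all the identity $\sigma(\mathrm{Stab}_F(f)) = \rho^{-1}(\mathrm{Stab}_{G_*}(\lambda_0))$, after which the three cases fall out immediately from the $G_*$-stabilizers already recorded in Lemma~\ref{jmath_lem}. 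The content is the same---your converse step (the unique $g$ sending three points to three points must send the fourth correctly) is exactly the mechanism behind the paper's individual checks---but your packaging makes the structure more transparent and avoids repeating the same argument for each generator.
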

\begin{proof}
By Lemma \ref{cross} we know that the cross-ratio gives a bijection between $F \setminus \{0,1\}$ and the $PGL_2(F)$-orbits of ordered $4$-tuples of distinct points of $F \cup \{\infty \}$. Two ordered $4$-tuples correspond to the same unordered set, if and only if their cross-ratios are in the same $G_*$-orbit. Thus, by Lemma \ref{jmath_lem}, the $\jmath$-invariant gives a bijection between $\jmath(F \setminus\{0,1\})$ and the set of $PGL_2(F)$-orbits of unordered $4$-tuples of distinct points of $F \cup \{\infty \}$.

Let $((s_1,t_1), \dots, (s_4,t_4))$ be the chosen ordering of the $4$ points, and let $\lambda_0$ denote the cross-ratio of this ordering. 
For $g \in  \text{Stab}_F(f)$, let $\sigma_g \in S_4$ be defined by  $g \cdot ((s_1,t_1), \dots, (s_4,t_4)) = ((s_{\sigma(1)},t_{\sigma(1)}), \dots, (s_{\sigma(4)},t_{\sigma(4)}))$. By Lemma \ref{cross}, the homomorphism  $\sigma:\text{Stab}_F(f) \to S_4$ is injective. 
The involutions  $\sigma_1=(12)(34)$ and $\sigma_2=(13)(24)$ are always in $\sigma(\text{Stab}_F(f))$.  To see this let $h_1$ be the unique element of $PGL_2(F)$ sending $(r_1, r_2, r_3)$ to $(r_2, r_1, r_4)$. The following  cross-ratios are equal
\[ \lambda_0 = (r_2, r_1; r_4, h_1(r_4)) =(r_1, r_2; h_1(r_4), r_4).\] 
Comparing the first and last tuples, we conclude $h_1(r_4)=r_3$. Thus, $\sigma_1=\sigma_{h_1}$.
 An identical argument shows that $\sigma_2=(13)(24)$ is in $\sigma(\text{Stab}_F(f))$. The  quotient group $S_4/\langle \sigma_1, \sigma_2 \rangle \simeq S_3$ has  $3$ elements of order $2$, namely the cosets represented by $(12), (13), (14)$ and two elements of order $3$ represented by $\sigma_4$ and $\sigma_4^2$.  
 
 Let $h_3$  be the unique element of $PGL_2(F)$ sending $(r_1, r_2, r_3)$ to $(r_2, r_1, r_3)$. By the properties of cross-ratio,  $\lambda_0=(r_2, r_1; r_3, h_3(r_4))$ whereas $(r_1, r_2; r_3, h_3(r_4))=\lambda_0^{-1}$. Therefore, $h_3(r_4)=r_4$ if and only if $\lambda_0=-1$. Thus, $\sigma_3=\sigma_{h_3}$ is in $\sigma(\text{Stab}_F(f))$ if and only if  $\lambda_0=-1$. An identical argument with $(12)$ replaced by $(13)$ and $(14)$, shows that $(13)  \in  \sigma(\text{Stab}_F(f))$ if and only if  $\lambda_0=2$, and 
$(14) \in \sigma(\text{Stab}_F(f))$ if and only if  $\lambda_0=1/2$. 

Let $h_4$  be the unique element of $PGL_2(F)$ sending $(r_2, r_3, r_4)$ to $(r_3, r_4, r_2)$. By the properties of cross-ratio,  
\[ \lambda_0 =(h_4(r_1), r_3; r_4, r_2), \quad (h_4(r_1), r_2; r_3, r_4) = 1 - \lambda_0^{-1}.\]
Therefore, $h_4(r_1)=r_1$ if and only if $\lambda_0 =1 - \lambda_0^{-1}$ which is equivalent to $\jmath(f)=0$. Thus $\sigma_4=\sigma_{h_4}$ is in $\sigma(\text{Stab}_F(f))$ if and only if  $\jmath(f)=0$. 
 \end{proof}
We remark that in \cite[Proposition 2.1]{Xiao}, a similar result as Lemma \ref{stabilizer_lemma} is obtained in the case  $F=\bC$, the field of complex numbers. We end this section with an expression for $I(f_1 f_2)$ where $f_1, f_2 \in \text{Sym}^2(V^*) $ are binary quadratic forms (which we will need in \S \ref{orbitsP3}). Let $f_1=\prod_{i=1}^2 (Ys_i-Xt_i) = u_0Y^2+u_1XY+u_2X^2$ and let 
$f_2=\prod_{i=1}^2 (Ys_i'-Xt_i')=v_0 Y^2+v_1 XY+v_2 X^2$. We recall that the homogeneous resultant of $f_1$ and $f_2$ is
\beq \label{eq:resultant} \text{Res}(f_1,f_2) = \textstyle \prod_{i=1}^2\prod_{j=1}^2 (t_is_j'-s_it_j') = \det \bbsm u_0&0&v_0&0\\ u_1&u_0&v_1&v_0\\u_2&u_1&v_2&v_1\\0&u_2&0&v_2 \besm. \eeq

\begin{lem} \label{I/res}
Let  $f_1=\prod_{i=1}^2 (Ys_i-Xt_i)$ and  $f_2 =\prod_{i=1}^2 (Ys_i'-Xt_i')$ be elements of  $\mathrm{Sym}^2(V^*)$. We assume $\mathrm{Res}(f_1,f_2) \neq 0$. Let  $\lambda$ denote the cross-ratio $((s_1,t_1),  (s_2,t_2); (s_1',t_1'), (s_2',t_2'))$. The expression $(\lambda+\lambda^{-1}-1)$ is independent of the orderings of each of the three sets $\{(s_1,t_1), (s_2,t_2) \}$,  $\{(s_1',t_1'), (s_2',t_2') \}$,  and $\{f_1,f_2\}$. We have:
\beq   \label{eq:I/res} I(f_1f_2) = \tfrac{1}{36} \, (\lambda+\lambda^{-1} - 1)\, \mathrm{Res}(f_1,f_2). \eeq
\end{lem}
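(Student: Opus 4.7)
My plan is to carry out the argument in three stages: first verifying the claimed invariance of $\lambda + \lambda^{-1} - 1$ under the three symmetries of orderings, then matching the $GL_2(F)$-transformation behavior of the two sides of \eqref{eq:I/res}, and finally pinning down the constant by direct evaluation on a $PGL_2$-normal form.

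For the invariance, I will substitute into \eqref{eq:crossratiodef} and observe that swapping the two points in either pair sends $\lambda \mapsto \lambda^{-1}$, whereas exchanging the two pairs $\{f_1,f_2\}$ fixes $\lambda$; hence the symmetric combination $\lambda + \lambda^{-1}$ (and therefore $\lambda + \lambda^{-1} - 1$) depends only on the unordered data. For the weights, \eqref{eq:IJmodular} gives $I(g \cdot (f_1 f_2)) = \det(g)^{-4}\, I(f_1 f_2)$, and the short calculation
\[
(cs_i + dt_i)(as_j' + bt_j') - (as_i + bt_i)(cs_j' + dt_j') = -\det(g)(s_i t_j' - t_i s_j')
\]
shows that each factor in the product formula \eqref{eq:resultant} picks up a factor of $-\det(g)$ under $g$, so $\text{Res}(f_1, f_2)$ is likewise a $GL_2$-semi-invariant of weight $4$. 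Since the cross-ratio is $PGL_2$-invariant, both sides of \eqref{eq:I/res} transform identically under $GL_2(F)$.

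Finally I will use $PGL_2(F)$-transitivity on ordered triples (passing to an algebraic extension containing the roots, if necessary) to reduce to the normal form $(s_1,t_1), (s_2,t_2), (s_1',t_1'), (s_2',t_2') = (0,1), (1,0), (1,1), (1,\lambda)$, for which $f_1 = -XY$, $f_2 = Y^2 - (1+\lambda)XY + \lambda X^2$, and $f_1 f_2 = -XY^3 + (1+\lambda)X^2Y^2 - \lambda X^3Y$. A short polynomial computation then yields $I(f_1 f_2) = (\lambda^2 - \lambda + 1)/36$ and $\text{Res}(f_1, f_2) = \lambda$, matching $\tfrac{1}{36}(\lambda + \lambda^{-1} - 1)\cdot \lambda$. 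Clearing $\text{Res}$ turns \eqref{eq:I/res} into a polynomial equation in $(u_i, v_j)$; verifying it on this one-parameter family establishes it as a universal identity with coefficients in $\bZ[\tfrac{1}{6}]$, which therefore extends to all $(f_1, f_2)$ with $\text{Res} \neq 0$, including the degenerate case $\lambda = 1$ arising from a repeated root. The main delicacy I anticipate is this descent step --- ensuring that verification after base-extension descends to $F$ --- but it is automatic since the cleared identity is universal and holds independently of the base field in characteristic different from $2$ and $3$.
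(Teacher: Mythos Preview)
Your proposal is correct and follows essentially the same route as the paper: establish the $\langle \sigma_1,\sigma_2,\sigma_3\rangle$-invariance of $\lambda+\lambda^{-1}-1$, check that $I(f_1f_2)$ and $\mathrm{Res}(f_1,f_2)$ are both $GL_2$-semi-invariants of weight $4$ so that their ratio is $PGL_2$-invariant, move to the normal form $(\infty,0,1,\lambda)$, and compute directly that $I=(\lambda^2-\lambda+1)/36$ and $\mathrm{Res}=\lambda$. Your extra paragraph on clearing denominators to obtain a universal polynomial identity over $\bZ[\tfrac{1}{6}]$ is a bit more careful than the paper (which tacitly works over a field containing the roots and does not discuss descent), but it is not a different method, just a mild strengthening of the same argument.
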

\bep
Let $\lambda$ be the cross-ratio $((s_1,t_1),  (s_2,t_2); (s_1',t_1'), (s_2',t_2'))$.
Since the set   $\{\lambda, \lambda^{-1}\}$ is preserved by the subgroup $\langle \sigma_1, \sigma_2, \sigma_3\rangle$ of $S_4$, we see that $(\lambda+\lambda^{-1}-1)$ depends only on the unordered set $\{f_1,f_2\}$. For $g = \bbsm a & b\\ c& d \besm \in GL_2(F)$,   we have 
\begin{align*}
g \cdot f_1 &= \det(g)^{-2} \textstyle\prod_{i=1}^2( Y (as_i+bt_i) - X (cs_i+dt_i) ), \\
g \cdot f_2 &= \det(g)^{-2} \textstyle\prod_{i=1}^2( Y (as_i'+bt_i') - X (cs_i'+dt_i') ), \end{align*}
 and hence $\text{Res}(g \cdot f_1, g \cdot f_2)$ equals
\[  \det(g)^{-8} \textstyle\prod_{i=1}^2\prod_{j=1}^2 ((cs_i+dt_i) (as_j'+bt_j')-(as_i+bt_i)(cs_j'+dt_j'),\]
which simplifies to $\det(g)^{-4} \textstyle \prod_{i=1}^2\prod_{j=1}^2 (t_is_j'-s_it_j')$. Therefore,
\[ \text{Res}(g \cdot f_1, g \cdot f_2) = \det(g)^{-4} \, \text{Res}(f_1,f_2).\]
Since $I(g \cdot f_1f_2)=\det(g)^{-4} I( f_1f_2)$ by \eqref{eq:IJmodular}, we see that 
\[ \frac{I( f_1f_2)}{\text{Res}(f_1,f_2)}=\frac{I( g \cdot (f_1 f_2))}{\text{Res}(g \cdot f_1, g \cdot f_2)}.\]
Let $\tilde f_1 = XY$ and $\tilde f_2 = (Y-X)(Y-\lambda X)$. By Lemma \ref{cross} there exists a unique element of $PGL_2(F)$ which carries the 
ordered $4$-tuple of the projective line over $F$ represented by $((s_1,t_1),  (s_2,t_2), (s_1',t_1'), (s_2',t_2'))$ to the ordered $4$-tuple $(\infty, 0, 1, \lambda)$. Thus, there exists  $g \in GL_2(F)$ such that $g\cdot f_1= c_1 \tilde f_1 $ and $g \cdot f_2 = c_2  \tilde f_2$ for some $c_1, c_2 \in F^{\times}$.   Clearly, Res$(\tilde f_1, \tilde f_2) = \lambda$. We have $I(\tilde f_1 \tilde f_2)=(\lambda^2-\lambda+1)/36$ because 
\[ \tilde f_1 \tilde f_2 =  \tfrac{-1}{4} (-4Y^3X)+  \tfrac{1+\lambda}{6} (6 X^2Y^2) +  \tfrac{-\lambda}{4} (-4 YX^3).\] Therefore 
\[ \frac{I( f_1f_2)}{\text{Res}(f_1,f_2)}=\frac{I(  (c_1\tilde f_1)(c_2 \tilde f_2))}{\text{Res}(c_1 \tilde f_1, c_2 \tilde f_2)}=\frac{I( \tilde f_1 \tilde f_2)}{\text{Res}(\tilde f_1, \tilde f_2)} =  \frac{(\lambda+\lambda^{-1} - 1)}{36}. \]
\eep

\section{$PGL_2(q)$ orbits of binary quartic forms over $\bF_q$} \label{orbitsP4}
In this section, the field $F$ is the finite field $\bF_q$ with char$(\bF_q) \neq 2,3$. 
 We begin by fixing some notation:
\begin{description}
\item [$\overline{\bF_q}$] is  an algebraic closure of $\bF_q$,
\item[$G$] equals $PGL_2(q)$,
\item[$\phi$] is the Frobenius map $\phi(x)=x^q$ on $\overline{\bF_q}$,
\item[$\omega$] is a primitive cube root of unity in $\bF_{q^2}$,
\item[$\ep$] is a fixed quadratic non-residue in $\bF_q$,
\item[$\bF_q^\times$] is the multiplicative group of $\bF_q\setminus\{0\}$,
\item[$\gamma$] is  a generator of the cyclic group $\bF_q^\times$,
\item[$(\bF_q^\times)^2$] denotes the squares in $\bF_q^\times$ (and not a Cartesian product),
\item[$N_{q^2/q}(x)$] equals $x \phi(x)$ for $x \in \bF_{q^2}$,
\item[$N_{q^3/q}(x)$] equals $x \phi(x) \phi^2(x)$ for $x \in \bF_{q^3}$,
\item[$N_{q^4/q^2}(x)$] equals $x  \phi^2(x)$ for $x \in \bF_{q^4}$,
\item [$\mu \in \{\pm1\}$] is defined by $q \equiv \mu \mod 3$.
\end{description}
\subsection{$G$-orbits of binary cubic forms} Before going on to classify the $G$-orbits of binary quartic forms, we shall briefly discuss the $G$-orbits of binary cubic forms over $\bF_q$. Let $f$ be a binary cubic form over $\bF_q$ given by \[ f(X,Y)=y_0X^3-3y_1X^2Y+3y_2XY^2-y_3Y^3.\] Since $\Omega_3:\bP(D_3V) \to \bP(\text{Sym}^3(V^*))$ is a $G$-equivariant isomorphism, the $G$-orbit classification of binary cubic forms over $\bF_q$ is equivalent to $G$-orbit classification of points of $\bP(D_3V)$. The latter classification is given in 
 of \cite[Corollary 5]{Hirschfeld3}.  In the next lemma, we record  the  $G$-orbits of binary cubic forms and also the corresponding $G$-orbits of points in $\bP(D_3V)$.
\begin{lem}\label{cubic} 
The projective space of binary cubic forms over $\bF_q$ of size $q^3+q^2+q+1$ can be decomposed into the following five $G$-orbits. We also indicate in parentheses, the description of the corresponding orbit of points in $\bP(D_3V)$.
\begin{enumerate}
    \item $G \cdot X^3$ of size $(q+1)$ (corresponding to points of $C(\bF_q)$).
    \item $G \cdot X^2Y$ of size $q(q+1)$ (corresponding to points not on $C(\bF_q)$ but on some tangent line of $C(\bF_q)$).
    \item $G \cdot XY(X-Y)$ of size $(q^3-q)/6$ (corresponding to intersection points of the osculating planes at three distinct points of $C(\bF_q)$).
    \item $G \cdot X(X^2-\epsilon Y^2)$ of size $q(q^2-1)/2$ (corresponding to intersection points of the osculating planes to $C$ at $P,Q,R$, where $P$ is a point of $C(\bF_q)$ and $Q,R$ are two Galois conjugate points of $C(\bF_{q^2})$).
    \item $G \cdot (X-\theta Y)(X-\phi(\theta) Y)(X-\phi^2(\theta) Y)$, where $\theta \in \bF_{q^3}\setminus \bF_{q^2}$, of size $(q^3-q)/3$ (corresponding to intersection points of the osculating planes to $C$ at $P,Q,R$, where $P,Q,R$ are three Galois conjugate points of $C(\bF_{q^3})$).
\end{enumerate}
\end{lem}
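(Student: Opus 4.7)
The plan is to use the $G$-equivariant projective isomorphism $\Omega_3 : \bP(D_3V) \to \bP(\text{Sym}^3(V^*))$, which is defined since $\text{char}(\bF_q)\neq 3$ makes every $\binom{3}{i}$ invertible (cf.\ Definition \ref{polardual}), to translate the point classification into an orbit classification of binary cubic forms. A nonzero cubic form is determined up to scalar by its degree-$3$ effective root divisor on $\bP^1$ over $\overline{\bF_q}$, and the $G$-action on forms corresponds, under the identification of roots with projective points, to the $PGL_2(\bF_q)$-action on such divisors. Thus the first step is to enumerate Frobenius-stable effective degree-$3$ divisors on $\bP^1$ up to $PGL_2(\bF_q)$-action.

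The enumeration is by (multiplicity pattern, Galois orbit structure of the support), producing exactly the five classes listed: (1) a triple root in $\bP^1(\bF_q)$; (2) a double plus simple root, both in $\bP^1(\bF_q)$; (3) three distinct $\bF_q$-rational roots; (4) one $\bF_q$-rational root together with a Galois-conjugate pair in $\bP^1(\bF_{q^2})\setminus\bP^1(\bF_q)$; (5) a single $\phi$-orbit of three roots in $\bP^1(\bF_{q^3})\setminus\bP^1(\bF_q)$. Transitivity of $G$ within each class follows from the sharp $3$-transitivity of $PGL_2(\overline{\bF_q})$ on ordered triples of distinct points of $\bP^1(\overline{\bF_q})$: for classes (4) and (5) one chooses orderings of the two triples that are compatible with $\phi$, so that the unique $\overline{\bF_q}$-element carrying one ordered triple to the other commutes with $\phi$ and hence descends to $G$. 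This Galois-descent step is the only delicate point; for (1)--(3), transitivity is immediate from the classical $k$-transitivity of $PGL_2(\bF_q)$ on $\bP^1(\bF_q)$.

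Orbit sizes come from orbit--stabilizer, using the explicit representatives in the statement. The stabilizers are the Borel subgroup fixing $\infty$ in (1) (order $q(q-1)$), the diagonal torus fixing $0$ and $\infty$ in (2) (order $q-1$), the subgroup $S_3$ permuting an unordered $\bF_q$-triple in (3), the involution swapping the conjugate pair in (4), and the cyclic order-$3$ subgroup generated by a Frobenius-conjugate element in (5). These produce the listed orbit sizes, and a direct check verifies the total $(q+1)+q(q+1)+(q^3-q)/6+q(q^2-1)/2+(q^3-q)/3 = q^3+q^2+q+1 = |\bP(D_3V)|$. For the parenthetical geometric descriptions, the key incidence is the $m=3$ analogue of Lemma \ref{lem_pi}: a point $P\in\bP(D_3V)$ lies on the osculating plane $\mO_2(\nu_3(s,t))$ if and only if the cubic form $\Omega_3(P)$ vanishes at $(s,t)$ (an elementary pairing computation using the matrix $A_3$). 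Under this dictionary, a triple root forces $P=\nu_3(s,t)\in C(\bF_q)$; a double root at $(s_1,t_1)$ plus a simple root at $(s_2,t_2)$ places $P$ on the tangent line $\mO_1(\nu_3(s_1,t_1))$ intersected with $\mO_2(\nu_3(s_2,t_2))$, hence on a tangent line to $C$ but not on $C$ itself; and three distinct simple roots of the prescribed Galois type identify $P$ with the unique intersection of the three osculating planes at the corresponding points of $C(\overline{\bF_q})$, matching cases (3)--(5).
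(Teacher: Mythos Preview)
Your proof is correct. The paper itself does not prove this lemma in detail: it simply observes that the $G$-equivariant isomorphism $\Omega_3:\bP(D_3V)\to\bP(\text{Sym}^3(V^*))$ reduces the question to the point-orbit classification of $PG(3,q)$ under the symmetry group of the twisted cubic, and then cites \cite[Corollary~5]{Hirschfeld3} for that classification. Your argument supplies what the paper leaves to the reference, and does so by the natural route of classifying Frobenius-stable degree-$3$ divisors on $\bP^1$ and using sharp $3$-transitivity plus Galois descent --- essentially the same method the paper uses later (Lemma~\ref{lemDelta=0}, Proposition~\ref{Lambda_prop}) for quartic forms. The stabilizer and orbit-size computations are all accurate, and your verification of the parenthetical geometric descriptions via the pairing $\Omega_3$ is the correct mechanism (it is the $m=3$ instance of the incidence used throughout \S\ref{chFneq23}).
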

We remark that the classification of binary cubic forms over $\bF_q$ into $GL_2(\bF_q)$ orbits has been studied in the number theory literature also, for example in the works \cite{Taniguchi3}, \cite{Taniguchi} and \cite{Taniguchi2} by   T. Taniguchi and F. Thorne. Using the orbit classification, they further determined the Fourier transform of the characteristic function of any $GL_2(\mathbb{F}_q)$-invariant subset of binary cubic forms in \cite{Taniguchi}. In \cite{Taniguchi2}, they exploited these Fourier transform formulas to yield `level of distribution' results for cubic and quartic fields. The authors obtain results not only on binary cubic forms, but also on some related pre-homogeneous vector spaces (vector spaces with an action of an algebraic group, such that there is a  dense open orbit).

\subsection{$G$-orbits of binary quartic  forms with discriminant zero}
 The isomorphism  $\Omega_4: \bP(D_4 V) \to \bP(\text{Sym}^4(V^*))$ is $G$-equivariant, and hence the decomposition of binary quartic forms over $\bF_q$ into $G$-orbits is equivalent to the decomposition of $\bP(D_4 V)$ into $G$-orbits.  Given a binary quartic form
\[f(X,Y)=z_0 Y^4-4 z_1 Y^3X+6 z_2 Y^2X^2 -4 z_3YX^3 + z_4X^4, \] 
let  $v_f=\Omega_4^{-1}(f)=\sum_{i=0}^4 z_i E_i$. We recall from Lemma \ref{lem_pi} that $(Xt-Ys)$ is a factor of $f(X,Y)$ if and only if $v_f$ lies in the osculating hyperplane to $C_4$ at $\nu_4(s,t)$.

\begin{definition} \label{Fi_def} Given $f(X,Y) \in \bP(\mathrm{Sym}^4(V^*))$, let $F \supset \bF_q$ denote the splitting field of $f(X,Y)$. We decompose  $\bP(\mathrm{Sym}^4(V^*))$ as $\mF^0 \cup \mF_1 \cup \mF_2 \cup \mF_2' \cup \mF_4 \cup \mF_4'$ as below. The set  $\mF^0$ consists of forms with $\Delta(f)=0$. The sets $\mF_i$ for $i=1,2,4$ consist of forms $f$ with $\Delta(f) \neq 0$ and having  exactly $i$ linear factors defined over $\bF_q$. The sets $\mF_4'$ and $\mF_2'$ consist of forms $f$ with $\Delta(f) \neq 0$,  having no linear factors over $\bF_q$, and whose splitting field $F$ is $\bF_{q^2}$ and $\bF_{q^4}$, respectively. Then
 \begin{enumerate} 
\item   $|\mF_{4}|=\tbinom{q+1}{4}=\tfrac{(q-2)|G|}{24}$ is the number of ways of picking $4$ distinct linear forms  over $\bF_q$.
\item $|\mF_4'|=\tbinom{ (q^2-q)/2 }{2}= \tfrac{(q-2)|G|}{8}$ is the number of ways to pick two distinct irreducible quadratic forms over $\bF_q$.
\item  $|\mF_{2}|=\tbinom{q+1}{2}\tfrac{q^2-q}{2}=\tfrac{q|G|}{4}$ is the number of ways of picking $2$ distinct linear forms  and an irreducible quadratic form over $\bF_q$.
\item $|\mF_2'|=\tfrac{q^4-q^2}{4}= \tfrac{q |G|}{4}$ is the number of irreducible quartic forms over $\bF_q$.
\item  $|\mF_{1}|=(q+1)\tfrac{q^3-q}{3}=\tfrac{(q+1)|G|}{3}$ is the number of ways of picking a linear form and an irreducible cubic form over $\bF_q$.
\item  From the above parts, we see that   $|\mF_1|+|\mF_2|+|\mF_2'|+|\mF_4|+|\mF_4'|$ equals $(q^4-q^2)$. Therefore, $|\mF^0|=(q^3+2q^2+q+1)$.
\end{enumerate}
\end{definition}
We will determine the $G$-orbits of $\mF^0, \mF_1, \mF_2, \mF_4$ and $\mF_2', \mF_4'$. We start with  the set $\mF^0$. It follows from the expression $\Delta(f) =I(f)^3-J(f)^2$ that for $f \in \mF^0$, either $I(f), J(f)$  both vanish,  or both of them are non-zero.
\begin{lem}  \label{lemDelta=0} The set $\mF^0$ of size $(q^3+2q^2+q+1)$ consists of  the following $G$-orbits. The first two orbits below have both $I$ and $J$-invariants zero. The remaining orbits have non-zero values for both $I$ and $J$-invariants. We also indicate in parentheses, the description of the corresponding orbit in $\bP(D_4V)$.
\begin{enumerate}
\item $G \cdot X^4$  of size $(q+1)$ (corresponding to points of $C_4(\bF_q)$).  
\item $G \cdot X^3Y$ of size $q(q+1)$ (corresponding to  points not on $C_4$ but  on some  tangent line  of $C_4(\bF_q)$).  
\item $G \cdot X^2Y^2$  of size $(q^2+q)/2$ (corresponding to intersection points of the osculating planes of $C_4$ at two  distinct points $P, Q$ of $C_4(\bF_q)$).
\item $G \cdot (X^2-\epsilon Y^2)^2$ of size $(q^2-q)/2$  (corresponding to the intersection points of the osculating planes of $C_4$ at  a pair of Galois conjugate points $P, Q$ of $C_4(\bF_{q^2})$). 
\item $G \cdot X^2Y(Y-X)$ of size $(q^3-q)/2$ (corresponding to intersection points of the  osculating plane to  $C_4$ at $P$ and  the osculating hyperplanes of $C_4$ at $Q$ and $R$   where $P, Q, R$ are $3$ distinct points of $C_4(\bF_q)$).
\item $G \cdot X^2(X^2-\epsilon Y^2)$ of size $(q^3-q)/2$ (corresponding to intersection points of the  osculating plane to  $C_4$ at $P$ and  the osculating hyperplanes of $C_4$ at $Q$ and $R$   where $P$ is a point of $C_4(\bF_q)$, and $Q, R$ are Galois conjugate  points of $C_4(\bF_{q^2})$).
\end{enumerate}
\end{lem}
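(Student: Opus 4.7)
The plan is to classify $f\in\mF^0$ by the combinatorial type of its roots in $\overline{\bF_q}$, together with the Galois action of Frobenius. Since $\Delta(f)=0$, the quartic $f$ has at least one repeated root, so the unordered multiset of multiplicities of the roots of $f$ in $\overline{\bF_q}$ is one of $(4)$, $(3,1)$, $(2,2)$, or $(2,1,1)$. Because Frobenius $\phi$ permutes only roots of equal multiplicity, in the $(4)$ and $(3,1)$ cases every root must lie in $\bF_q$; in the $(2,2)$ case the two double roots are either both in $\bF_q$ or form a Galois-conjugate pair in $\bF_{q^2}\setminus \bF_q$; and in the $(2,1,1)$ case the double root must lie in $\bF_q$ (a non-$\bF_q$ double root would carry along a conjugate double root, leaving no room for a third simple root), while the two simple roots are either both in $\bF_q$ or form a conjugate pair in $\bF_{q^2}\setminus \bF_q$. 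This produces exactly the six combinatorial types appearing in the statement.

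Next I would verify that each combinatorial type is a single $G$-orbit and identify the stated representative. Since $G=PGL_2(q)$ is sharply $3$-transitive on $\bP^1(\bF_q)$, it acts transitively on every configuration of at most three marked points of $\bP^1(\bF_q)$ (ordered or unordered, possibly with a distinguished point), which directly handles types (1), (2), (3), and (5) with representatives $X^4$, $X^3Y$, $X^2Y^2$, and $X^2Y(Y-X)$. For type (4) — a conjugate pair in $\bF_{q^2}\setminus \bF_q$ — the set-wise stabilizer of such a pair is the dihedral extension of the non-split torus, of order $2(q+1)$, so the orbit has size $|G|/(2(q+1))=(q^2-q)/2$; this equals the total number of conjugate pairs in $\bF_{q^2}\setminus \bF_q$ (equivalently, the number of monic irreducible quadratics over $\bF_q$), forcing transitivity. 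For type (6), first move the $\bF_q$-point to $\infty$, reducing to the action of the Borel subgroup on conjugate pairs in $\bF_{q^2}\setminus \bF_q$; this Borel action is already transitive on $\bF_{q^2}\setminus \bF_q$ itself (both sides have cardinality $q(q-1)$), hence also on conjugate pairs. The orbit sizes then follow from the natural combinatorial counts: $q+1$, $(q+1)q$, $\binom{q+1}{2}$, $(q^2-q)/2$, $(q+1)\binom{q}{2}=(q^3-q)/2$, and $(q+1)(q^2-q)/2=(q^3-q)/2$, summing to $q^3+2q^2+q+1=|\mF^0|$ as required by Definition~\ref{Fi_def}.

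Finally, the parenthetical geometric descriptions translate the algebraic data into the osculating geometry of $C_4$ via Lemma~\ref{lem_pi} and its natural refinement: for $P=\nu_4(s,t)$, the linear form $(Xt-Ys)$ divides $f$ with multiplicity exactly $k$ if and only if $v_f=\Omega_4^{-1}(f)\in \mO_{4-k}(P)\setminus \mO_{3-k}(P)$. This refined statement is immediate at $P=\nu_4(1,0)$, where $\mO_{4-k}(P)$ is the vanishing locus of the coordinates $z_{5-k},\ldots,z_4$ while $Y^k\mid f$ is precisely the condition $z_{5-k}=\cdots=z_4=0$; the general case follows by $G$-equivariance of $\Omega_4$ and of the Veronese map. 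Applied orbit by orbit this reproduces each of the parenthetical descriptions (tangent line for a triple root, osculating plane for a double root, osculating hyperplane for a simple root). The statements about the vanishing of $I(f)$ and $J(f)$ are verified directly on the representatives, noting that both invariants vanish precisely when $f$ has a factor of multiplicity at least three. The main subtlety is the careful tracking of Galois-conjugate pairs and the transitivity in types (4) and (6), but this reduces to the standard structure theory of the Borel and non-split tori of $PGL_2(q)$ acting on $\bP^1(\bF_{q^2})$.
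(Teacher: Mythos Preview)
Your proposal is correct and follows essentially the same approach as the paper: decompose $\mF^0$ according to the multiset of root multiplicities together with the Galois action, and then verify that each resulting combinatorial type is a single $G$-orbit with the stated representative and size. The paper's version is terser---it simply names the stabilizer in each case and reads off the orbit size---whereas you supply the underlying transitivity arguments (sharp $3$-transitivity, the non-split torus count, the Borel action on $\bF_{q^2}\setminus\bF_q$) and spell out the osculating-geometry dictionary; but the substance is the same.
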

\begin{proof}
We decompose $\mF^0$ by considering the  partitions of $4$, corresponding to the multiset of rational roots of $f$ and their multiplicities. The $(q+1)$ forms  having  a single factor of multiplicity $4$ form an orbit $G \cdot X^4$  whose stabilizer is the group $\{ t \mapsto c+dt\}$ fixing $\infty$. The $(q+1)q$ forms having   two distinct factors  of  multiplicities $3$ and $1$ form an orbit $G \cdot X^3Y$ whose stabilizer is the group $\{ t \mapsto dt\}$ fixing both $0$ and $\infty$. The $\tbinom{q+1}{2}$ forms having  two distinct rational factors  of  multiplicity $2$ each form an orbit $G \cdot X^2Y^2$  with stabilizer group $\{t \mapsto d t^{\pm 1}\}$ preserving the set $\{0, \infty\}$. Similarly, the $\tbinom{q}{2}$ forms having  two distinct Galois conjugate factors  of  multiplicity $2$ each form an orbit $G \cdot (X^2- \epsilon Y^2)^2$ with stabilizer group of size $2(q+1)$ preserving  $\{ \pm \sqrt \ep\}$. The $(q+1)\tbinom{q}{2}$ forms having  three distinct rational factors  of  multiplicity $2, 1$ and $1$ form an orbit $G \cdot X^2Y(Y-X)$ whose stabilizer is the group of order $2$ generated by the involution $t \mapsto(1-t)$. Similarly, the  $(q+1)\tbinom{q}{2}$ forms having a rational factor  of  multiplicity $2$ and two Galois conjugate factors of multiplicity $1$  each, form an orbit
$G \cdot X^2(X^2- \epsilon Y^2)$ whose stabilizer is the group of order $2$ generated by the involution $t \mapsto \pm t$. 
\end{proof}
\subsection{$G$-orbits of binary quartic  forms with nonzero discriminant}
Let $f(X,Y)$ be a quartic form over $\bF_q$ with $\Delta(f) \neq 0$.  If $F$ denotes the splitting field of $f(X,Y)$, then Stab$_F(f)$ has been determined in Lemma \ref{stabilizer_lemma}.  We will determine the group Stab$(f) = \text{Stab}_{F}(f) \cap PGL_2(q)$. 

\begin{definition} \label{restricted_ordering}
For $f  \in \mF_4', \mF_2', \mF_{2}, \mF_{1}$, by a \emph{restricted ordering} of the roots of $f$, we mean orderings of the following form:
 \begin{description}
 \item  [$f\in\mF_{4}:$ ]\emph{ no restriction on the ordering $(r_1, r_2, r_3, r_4)$}.
\item[$f\in \mF_4':$ ]  $(r_1, r_2, r_3, r_4)=(r_1, \phi(r_1), r_3, \phi(r_3))$.
\item  [$f\in \mF_{2}:$ ]  $(r_1, r_2, r_3, r_4)=(r_1,r_2, r_3, \phi(r_3))$\emph{ with }$r_1, r_2 \in PG(1,q)$.
\item[$f\in \mF_2':$ ]   $(r_1, r_2, r_3, r_4)=(r_1, \phi^2(r_1), \phi(r_1), \phi^3(r_1))$.
\item  [$f\in\mF_1:$ ]   $(r_1, r_2, r_3, r_4)=(r_1,r_2,\phi(r_2),\phi^2(r_2))$ \emph{ with } $r_1 \in PG(1,q)$.
\end{description}
We note that the action of  $G=PGL_2(q)$ carries a restricted ordering of points to another restricted ordering of points of the same type. \\
 For $f$ with $\Delta(f) \neq 0$,   let $\text{ord}(f)$ denote the set of all restricted orderings of the roots of $f$. 
  \end{definition}

  The Frobenius map $\phi$ acts on $\text{ord}(f)$ by $\phi(r_1, r_2, r_3, r_4)=(\phi(r_1), \dots, \phi(r_4))$.
  \begin{lem}  \label{ZS4}
 The action of the Frobenius map $\phi$ on $\mathrm{ord}(f)$ is realized  by the permutation  $\sigma_{\phi} \in S_4$:    $\phi(r_1,\dots,r_4)=(r_{\sigma_\phi(1)}, \dots, r_{\sigma_\phi(4)})$ defined  by 
 \[ \sigma_{\phi} = \begin{cases}  \mathrm{trivial} &\text{if $f \in \mF_4$},\\ 
 \sigma_1=(12)(34) &\text{ if $f \in \mF_4'$,}\\
   \sigma_1\sigma_3=(34) &\text{ if $f \in \mF_2$,}\\
      \sigma_3\sigma_2=(1324) &\text{ if $f \in \mF_2'$,}\\
   \sigma_4=(234) &\text{ if $f \in \mF_1$.} \end{cases}\]
   
    The set  $\mathrm{ord}(f)$ forms a single orbit  $Z_{S_4}(\sigma_\phi) \cdot (r_1, r_2, r_3, r_4)$ under the  centralizer subgroup of $\sigma_\phi$ in $S_4$ given by 
    \beq \label{eq:ZS4} Z_{S_4}(\sigma_\phi)=\begin{cases} 
    S_4 &\text{ if $f \in \mF_4$,} \\
\langle \sigma_1, \sigma_2, \sigma_3  \rangle \simeq D_4 &\text{ if $f \in \mF_4'$,} \\
\langle \sigma_1,  \sigma_3  \rangle  \simeq \bZ/2 \bZ  \times \bZ/2 \bZ   &\text{ if $f \in \mF_2$,} \\

\langle \sigma_3 \sigma_2 \rangle = \langle (1324)  \rangle\simeq \bZ/4\bZ   &\text{ if $f \in \mF_2'$,} \\
\langle \sigma_4 \rangle = \langle (234)  \rangle \simeq \bZ/3 \bZ &\text{ if $f \in \mF_1$.}
\end{cases}\eeq

\end{lem}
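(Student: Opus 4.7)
The plan is to establish the lemma in three steps: (i) compute $\sigma_\phi$ by applying $\phi$ componentwise to a restricted ordering of each type and simultaneously verify $\phi(\mathrm{ord}(f))\subseteq\mathrm{ord}(f)$; (ii) prove via an elementary compatibility that the set of $\sigma\in S_4$ preserving $\mathrm{ord}(f)$ is exactly $Z_{S_4}(\sigma_\phi)$, so that this centralizer acts freely on $\mathrm{ord}(f)$; and (iii) identify the centralizers with the groups in \eqref{eq:ZS4} and match their orders against the sizes of $\mathrm{ord}(f)$ to obtain transitivity.

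For step (i), take a restricted ordering $(r_1,r_2,r_3,r_4)$ of each type. In $\mathcal F_4$ every $r_i\in\bF_q$, so $\phi$ fixes the tuple and $\sigma_\phi=\mathrm{id}$. In $\mathcal F_4'$, the shape $(r_1,\phi(r_1),r_3,\phi(r_3))$ combined with $\phi^2=\mathrm{id}$ on $\bF_{q^2}$ yields $\phi(r_1,\phi(r_1),r_3,\phi(r_3))=(r_2,r_1,r_4,r_3)$, i.e.\ $\sigma_\phi=(12)(34)=\sigma_1$. In $\mathcal F_2$ only positions $3,4$ get swapped, giving $(34)=\sigma_1\sigma_3$. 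In $\mathcal F_1$, $r_1$ is fixed and $r_2\mapsto\phi(r_2)=r_3$, $r_3\mapsto r_4$, $r_4\mapsto r_2$, so $\sigma_\phi=(234)=\sigma_4$. The $\mathcal F_2'$ case requires care: writing $(r_1,r_2,r_3,r_4)=(r_1,\phi^2(r_1),\phi(r_1),\phi^3(r_1))$ and using $\phi^4=\mathrm{id}$ on $\bF_{q^4}$, applying $\phi$ gives $(\phi(r_1),\phi^3(r_1),\phi^2(r_1),r_1)=(r_3,r_4,r_2,r_1)$, which, read through the convention $\phi(\mathbf r)=(r_{\sigma_\phi(1)},\ldots,r_{\sigma_\phi(4)})$, gives the $4$-cycle $1\mapsto3\mapsto2\mapsto4\mapsto1$, i.e.\ $(1324)=\sigma_3\sigma_2$. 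In every case, the image tuple again satisfies the restriction of Definition \ref{restricted_ordering}, proving $\phi(\mathrm{ord}(f))\subseteq\mathrm{ord}(f)$.

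For step (ii), let $\mathrm{ord}_{\mathrm{full}}(f)$ denote the set of all orderings of the four roots of $f$, on which $S_4$ acts freely and transitively by $\sigma\cdot(r_1,\ldots,r_4)=(r_{\sigma(1)},\ldots,r_{\sigma(4)})$. Since $\phi$ acts componentwise, it commutes with the $S_4$-action, so for any ordering $\mathbf r$ with Frobenius-permutation $\sigma_\phi$ one has
\[
\phi(\sigma\cdot\mathbf r)=\sigma\cdot\phi(\mathbf r)=\sigma\sigma_\phi\cdot\mathbf r=(\sigma\sigma_\phi\sigma^{-1})\cdot(\sigma\cdot\mathbf r),
\]
so the Frobenius-permutation of $\sigma\cdot\mathbf r$ is $\sigma\sigma_\phi\sigma^{-1}$. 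The tuple $\sigma\cdot\mathbf r$ is again a restricted ordering of the same type exactly when its Frobenius-permutation equals $\sigma_\phi$, which is the condition $\sigma\in Z_{S_4}(\sigma_\phi)$. Hence $Z_{S_4}(\sigma_\phi)$ acts on $\mathrm{ord}(f)$ and, by freeness of the full $S_4$-action, this action is free.

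For step (iii), I verify the centralizer descriptions by direct inspection in $S_4$: $Z_{S_4}(\mathrm{id})=S_4$ is immediate; $Z_{S_4}((12)(34))$ consists of the eight permutations stabilising the partition $\{\{1,2\},\{3,4\}\}$, generated by $\sigma_1,\sigma_2,\sigma_3$; $Z_{S_4}((34))=\langle\sigma_1,\sigma_3\rangle$ has order $4$; and the centralizers of a $4$-cycle and $3$-cycle in $S_4$ are the cyclic subgroups they generate, of orders $4$ and $3$ respectively. This matches the right-hand side of \eqref{eq:ZS4}. Finally, I count $|\mathrm{ord}(f)|$ directly from Definition \ref{restricted_ordering}: the number of choices is $24,\,4\cdot2=8,\,2\cdot2=4,\,4,\,3$ in types $\mathcal F_4,\mathcal F_4',\mathcal F_2,\mathcal F_2',\mathcal F_1$ respectively, matching $|Z_{S_4}(\sigma_\phi)|$. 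Combined with freeness from step (ii), this forces transitivity of the $Z_{S_4}(\sigma_\phi)$-action, completing the proof. I do not expect a serious obstacle; the only delicate point is the sign/direction convention in the four-cycle case for $\mathcal F_2'$, which I would handle by an indexed computation as above.
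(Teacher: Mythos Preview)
Your proposal is correct and follows essentially the same route as the paper's proof: compute $\sigma_\phi$ from the shape of a restricted ordering, then use the identity $\phi(\sigma\cdot\mathbf r)=(\sigma\sigma_\phi\sigma^{-1})\cdot(\sigma\cdot\mathbf r)$ to see that the permutations relating restricted orderings are exactly the centralizer of $\sigma_\phi$, and finally read off the centralizers. The paper's version is terser—it gives one example for $\sigma_\phi$, cites \eqref{eq:centralizer} for the centralizers, and leaves the reverse inclusion $Z_{S_4}(\sigma_\phi)\cdot\mathbf r\subseteq\mathrm{ord}(f)$ implicit—whereas you make both directions explicit and add the cardinality check as a redundancy; but the underlying argument is the same.
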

\begin{proof}
In each of the cases  $f  \in \mF_4, \mF_4', \mF_2', \mF_{2}, \mF_{1}$, it is clear that the permutation $\sigma_\phi$ is as asserted, for example, if $f \in \mF_2$ and $(r_1, r_2, r_3, r_4)=(r_1,r_2, r_3, \phi(r_3))$ then $\sigma_{\phi}=(34)$.  Further, if  $(r_1, \dots, r_4)$ and $(r_1', \dots, r_4')$  are in $ \text{ord}(f)$, let $\sigma \in S_4$ such that  $(r_1',\dots,r_4') = (r_{\sigma(1)}, \dots, r_{\sigma(4)})$.
It follows that 
\[ r_{\sigma_\phi \sigma(i)}= \phi(r_{\sigma(i)})  =\phi(r_i')= r_{\sigma_{\phi}(i)}'
= r_{\sigma \sigma_{\phi}(i)}
,\]
and hence $\sigma \in Z_{S_4}(\sigma_\phi)$. Finally, \eqref{eq:ZS4} immediately follows from
\eqref{eq:centralizer}.\\

\end{proof} 
   
\begin{definition}
Let $\rho:S_4 \to G_*$ be the homomorphism defined in Lemma \ref{cross}. 
The subgroup  $\rho(Z_{S_4}(\sigma_\phi))$ of the anharmonic group $G_*$ will be denoted $H_i$ in the case of $\mF_i, i=1,2,4$  and $H_i'$ in the case $\mF_i', i=2,4$. 
\end{definition}
\begin{prop}
The groups $H_i, H_i'$ in the definition above equal:
\begin{align}
\nonumber H_1&=\langle \lambda \mapsto  1/(1-\lambda) \rangle, \\
H_2&=\langle  \lambda \mapsto  1/\lambda\rangle, \\
\nonumber H_4&=G_*, \\
\nonumber H_4'&=H_2'=H_2.
\end{align}
\end{prop}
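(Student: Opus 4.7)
The proof is essentially a direct computation, once one invokes the description of $\rho: S_4 \to G_*$ recorded in Lemma \ref{cross}. The plan is to combine two inputs: first, the explicit generators of $Z_{S_4}(\sigma_\phi)$ provided in \eqref{eq:ZS4}; second, the fact that the kernel of $\rho$ is $\langle \sigma_1, \sigma_2 \rangle$, and that $\rho(\sigma_3)$ is the involution $\lambda \mapsto \lambda^{-1}$ while $\rho(\sigma_4)$ is the order-$3$ element $\lambda \mapsto 1/(1-\lambda)$.

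With these ingredients the five identities fall out one at a time. For $H_4$, since $\rho$ is surjective and $Z_{S_4}(\sigma_\phi) = S_4$, we obtain $H_4 = G_*$. For $H_4'$ we have $Z_{S_4}(\sigma_\phi) = \langle \sigma_1, \sigma_2, \sigma_3 \rangle$; both $\sigma_1$ and $\sigma_2$ lie in $\ker\rho$, so $H_4' = \langle \rho(\sigma_3)\rangle = \langle \lambda \mapsto \lambda^{-1}\rangle$. The same reasoning gives $H_2 = \langle \rho(\sigma_3)\rangle = \langle \lambda \mapsto \lambda^{-1}\rangle$ from $Z_{S_4}(\sigma_\phi) = \langle \sigma_1, \sigma_3 \rangle$. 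For $H_2'$, we use $\sigma_3 \sigma_2 \equiv \sigma_3 \pmod{\ker\rho}$ to see $\rho(\sigma_3\sigma_2) = \lambda \mapsto \lambda^{-1}$, so $H_2' = \langle \lambda \mapsto \lambda^{-1}\rangle = H_2$. Finally $H_1 = \langle \rho(\sigma_4)\rangle = \langle \lambda \mapsto 1/(1-\lambda)\rangle$, an order-$3$ subgroup of $G_*$.

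There is no real obstacle here; the only thing that requires a brief verification is the reduction of $\sigma_3\sigma_2$ modulo $\ker\rho$ in the case of $\mF_2'$. Since $\sigma_2 \in \langle \sigma_1, \sigma_2\rangle = \ker\rho$, we have $\rho(\sigma_3\sigma_2) = \rho(\sigma_3)\rho(\sigma_2) = \rho(\sigma_3)$, which matches the claim that $H_2' = H_2$ despite the generator $(1324)$ having order $4$ in $S_4$: $\rho$ collapses it to an involution in $G_*$. All the stated equalities then follow.
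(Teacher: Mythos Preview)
Your proof is correct and follows essentially the same approach as the paper: both combine the explicit generators of $Z_{S_4}(\sigma_\phi)$ from \eqref{eq:ZS4} with the description of $\ker\rho$ and the images $\rho(\sigma_3)$, $\rho(\sigma_4)$ from Lemma~\ref{cross}, then read off each $H_i$, $H_i'$ case by case. Your treatment is slightly more explicit in spelling out the reduction $\rho(\sigma_3\sigma_2)=\rho(\sigma_3)$ for the $\mF_2'$ case, but the underlying argument is the same.
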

\begin{proof} The groups $H_i, H_i'$ are calculated as follows:  From Lemma \ref{cross}, we know  that  i) ker$(\rho) = \langle \sigma_1, \sigma_2 \rangle$, ii)$\rho(\sigma_3)$ is the map $t \mapsto t^{-1}$, and iii) $\rho(\sigma_4)$ is the map $t \mapsto 1/(1-t)$. Using this in \eqref{eq:ZS4} we get $\rho(Z_{S_4}(\sigma_\phi))$ equals i) $G_*$ in case of $\mF_4$, ii) $H_1$ in case of $\mF_1$, and iii) $H_2$ in case of $\mF_4', \mF_2, \mF_2'$.\end{proof} 

We define some subsets $ \tilde \mN_i = \mN_i \cup \mN_i^{1728} \cup \mN_i^{0}$  of $\overline{\bF_q}$. We will prove in Proposition \ref{Lambda_prop} below that the cross-ratio of a restricted ordering of the roots of $f \in \mF_i, \mF_i'$ will lie in $\tilde \mN_i$ where
\begin{align} \label{eq:Ni_def}
\nonumber  \tilde \mN_4&=  \bF_q\setminus\{0,1\}, \\
\tilde \mN_2&= \{\lambda \in \bF_{q^2}\setminus\{1\} : N_{q^2/q}(\lambda)=1 \},\\
\nonumber \tilde \mN_1&=\{\lambda \in \bF_{q^3} : \lambda^{q+1}- \lambda^q+1=0 \},\\
\nonumber \mN_i^{1728} &=\tilde \mN_i \cap \{-1, \tfrac{1}{2}, 2\},\\
\nonumber \mN_i^{0} &=\tilde \mN_i \cap \{ -\omega, -\omega^2\}, \\
\nonumber \mN_i&=\tilde \mN_i \setminus\{ -1,  \tfrac{1}{2}, 2,-\omega, -\omega^2\}. 
\end{align}
We note that $|\tilde \mN_i|=(q+2-i)$: this is obvious for $i=2, 4$, and for $i=1$, we observe that for $\bF_{q^3}=\bF_q[\theta]$, the $(q+1)$ values $\lambda_x= (x, \theta; \phi(\theta), \phi^2(\theta) )$ for  $x \in \bF_q \cup \{\infty\}$ are in $\mN_1$ because, 
\[ (\lambda_x)^q =(x,\phi(\theta);  \phi^2(\theta), \theta) =\frac{1}{1-\lambda_x},\] 
and the condition $\lambda^q = 1/(1-\lambda)$ is the same as $\lambda^{q+1} - \lambda^q+1=0$. Clearly, 
\begin{align*}
|\mN_4^{1728}|=|\{-1,1/2,2\}|=3,\; |\mN_2^{1728}|=|\{-1\}|=1, \;|\mN_1^{1728}|=0, \\
|\mN_4^0|=|\mN_1^0|=\mu+1,\quad |\mN_2^0|=1-\mu. \end{align*}
Therefore, $|\mN_i|=|\tilde\mN_i|-|\mN_i^{1728}|-|\mN_i^0|$ equals
\[ |\mN_4| =(q-6-\mu), \quad  |\mN_2| = (q-2+\mu), \quad   |\mN_1| = (q-\mu).\]
The notation reflects the fact that $\mN_i^{1728} \subset \jmath^{-1}(1728)$ and 
$\mN_i^{0} \subset \jmath^{-1}(0)$.  We note that the groups $H_i$ preserve the sets $\mN_i, \mN_i^{1728}, \mN_i^{0}$  and $\tilde \mN_i$.
\begin{prop} \label{Lambda_prop}
Given $f \in \mF_i$ (resp. $\mF_i'$) the set $\Lambda_f$ of cross-ratios of restricted orderings $\mathrm{ord}(f)$ of the roots of $f$,  forms a single $H_i$-orbit  (resp $H_i'$-orbit)  in $\tilde \mN_i$. \\
Let $\mF_i/G$  (resp. $\mF_i'/G$) denote the set of $G$-orbits on $\mF_i$  (resp. $\mF_i'$).  The map that takes $f \in \mF_i$ (resp $f \in \mF_i'$) to   $\Lambda_f$   in  $\tilde \mN_i$ induces a bijective map
\begin{align*} 
\mF_i/G  &\leftrightarrow \tilde\mN_i/H_i, \quad i=1,2,4.\\
\mF_i'/G  &\leftrightarrow \tilde\mN_i/H_i',  \quad i=2,4.
  \end{align*}
\end{prop}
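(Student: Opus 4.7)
The plan is to combine the $S_4$-action on restricted orderings from Lemma \ref{ZS4} with the cross-ratio classification of ordered $4$-tuples in Lemma \ref{cross}, and then handle bijectivity by Galois descent. First, for $f$ and a restricted ordering $(r_1,\ldots,r_4) \in \mathrm{ord}(f)$, Lemma \ref{ZS4} gives $\phi(r_1,\ldots,r_4)=(r_{\sigma_\phi(1)},\ldots,r_{\sigma_\phi(4)})$. Taking cross-ratios of both sides yields $\phi(\lambda)=\rho(\sigma_\phi)(\lambda)$, and in each of the five cases this equation is precisely the defining condition of $\tilde\mN_i$: for instance, $\sigma_\phi=(34)$ with $\rho(\sigma_\phi)\colon\lambda\mapsto\lambda^{-1}$ gives $N_{q^2/q}(\lambda)=1$ (the condition for $\tilde\mN_2$), and $\sigma_\phi=(234)$ with $\rho(\sigma_\phi)\colon\lambda\mapsto 1/(1-\lambda)$ gives $\lambda^{q+1}-\lambda^q+1=0$ (the condition for $\tilde\mN_1$). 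Since $\mathrm{ord}(f)$ is a single $Z_{S_4}(\sigma_\phi)$-orbit (Lemma \ref{ZS4}), its image $\Lambda_f$ under the $S_4$-equivariant (via $\rho$) cross-ratio map is a single $\rho(Z_{S_4}(\sigma_\phi))$-orbit, i.e.\ an $H_i$-orbit (resp.\ $H_i'$-orbit). Because $G=PGL_2(q)$ preserves cross-ratios, the assignment $f\mapsto \Lambda_f$ descends to a well-defined map $\mF_i/G\to\tilde\mN_i/H_i$ (resp.\ $\mF_i'/G\to\tilde\mN_i/H_i'$).

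For injectivity, suppose $\Lambda_f=\Lambda_{f'}$. Pick a restricted ordering $(r_1,\ldots,r_4)$ of $f$ with cross-ratio $\lambda\in\Lambda_f=\Lambda_{f'}$ and (using the single-orbit property just established) a restricted ordering $(r_1',\ldots,r_4')$ of $f'$ with the same cross-ratio $\lambda$. Lemma \ref{cross} supplies a unique $g\in PGL_2(\overline{\bF_q})$ with $g(r_i)=r_i'$ for each $i$. Applying $\phi$ to these equalities and substituting $\phi(r_i)=r_{\sigma_\phi(i)}$, $\phi(r_i')=r_{\sigma_\phi(i)}'$ shows that $\phi(g)$ satisfies the same equations; uniqueness then forces $\phi(g)=g$, so $g\in G$ and $f,f'$ are $G$-conjugate.

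The main obstacle is surjectivity, which I handle by a short Galois descent argument. Given $\lambda\in\tilde\mN_i$, form the $4$-tuple $\mathbf{P}=(P_1,P_2,P_3,P_4)=(\infty,0,1,\lambda)$ in $\bP^1(\overline{\bF_q})^4$; its cross-ratio is $\lambda$. Because $\phi(\lambda)=\rho(\sigma_\phi)(\lambda)$, the tuples $(\phi(P_1),\ldots,\phi(P_4))$ and $(P_{\sigma_\phi(1)},\ldots,P_{\sigma_\phi(4)})$ have the same cross-ratio, so Lemma \ref{cross} yields a unique $h\in PGL_2(\overline{\bF_q})$ with $h\phi(P_i)=P_{\sigma_\phi(i)}$. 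Lang's theorem, applied to the connected $\bF_q$-group $PGL_2$, makes the Lang map $g\mapsto g^{-1}\phi(g)$ surjective on $PGL_2(\overline{\bF_q})$, so one can choose $g$ with $g^{-1}\phi(g)=h$. Then $g\cdot\mathbf{P}$ is a tuple of four distinct points satisfying $\phi(gP_i)=gP_{\sigma_\phi(i)}$, which is the restricted-ordering condition; because the $\phi$-orbit structure of the entries is dictated by the cycle structure of $\sigma_\phi$, the polynomial $f=\prod_i(Ys_i-Xt_i)$ (with $(s_i,t_i)=gP_i$) is defined over $\bF_q$ and lies in the correct stratum $\mF_i$ (resp.\ $\mF_i'$), with $\lambda\in\Lambda_f$. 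In each case one can alternatively argue by an explicit construction (e.g.\ for $\mF_1$ with $r_1=\infty$, solving the $\bF_q$-linear equation $\phi^2(r)=(1-\lambda)r+\lambda\phi(r)$ for $r\in\bF_{q^3}\setminus\bF_q$), but Lang's theorem gives a uniform treatment across the five types.
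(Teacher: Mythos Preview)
Your proof is correct and in several respects cleaner than the paper's. The injectivity argument is essentially identical to the paper's. The two genuine differences are: (i) to show $\lambda_f\in\tilde\mN_i$, you observe directly that applying $\phi$ to a restricted ordering and then taking cross-ratios gives $\phi(\lambda)=\rho(\sigma_\phi)(\lambda)$, which is exactly the defining equation of $\tilde\mN_i$; the paper instead writes $\lambda_f$ as an explicit ratio $\pm\beta/\phi(\beta)$ in each of the five cases and verifies the condition by hand. (ii) For surjectivity you invoke Lang's theorem on the connected group $PGL_2$ to produce, uniformly across all five types, a $4$-tuple over $\overline{\bF_q}$ whose Frobenius action is the prescribed permutation $\sigma_\phi$; the paper instead constructs explicit families $\psi_\lambda$, $\psi'_\alpha$, $\eta_{(s,t)}$, $\upsilon_r$, $\upsilon'_r$ in each case (with the $\mF_2'$ case further splitting according to $q\bmod 4$).

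Your Lang-theorem argument is shorter and conceptually uniform, and your check that the cycle type of $\sigma_\phi$ forces the constructed $f$ to land in the correct stratum (e.g.\ $gP_1\notin\bF_{q^2}$ for $\mF_2'$ because $\sigma_\phi^2(1)=2\neq1$) is sound. The trade-off is that the paper's explicit representatives are not merely a proof device: they are reused verbatim to populate Tables~\ref{table:3} and~\ref{table:4} of orbit representatives and line generators, so for the paper's purposes the constructive approach does double duty that your abstract argument does not.
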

\bep
Let $f \in \mF_i$ (resp. $\mF_i'$). First, we show that $\Lambda_f$ forms a single $H_i$-orbit  (resp $H_i'$-orbit)  in $\overline{\bF_q}$. We consider the map ord$(f) \to \Lambda_f$ that takes a restricted ordering $(r_1, r_2, r_3, r_4)$ of the roots of $f$ to its cross-ratio $\lambda_f=(r_1, r_2; r_3, r_4)$. As shown in Lemma \ref{ZS4}, the set ord$(f)$ is the orbit  $Z_{S_4}(\sigma_\phi) \cdot (r_1, r_2, r_3, r_4)$ and $\rho(Z_{S_4}(\sigma_\phi))$ equals $H_i$ if $f \in \mF_i$ and $H_i'$ if $f \in \mF_i'$. Thus, $\Lambda_f = H_i \cdot \lambda_f$ if $f \in \mF_i$ and $\Lambda_f = H_i' \cdot \lambda_f$  if $f \in \mF_i'$.\\
 Next we show that $\lambda_f \in \tilde \mN_i$ for $f \in \mF_i \cup \mF_i'$.
\begin{enumerate}
\item If $f \in \mF_4$ then clearly  $\lambda_f \in \bF_q \setminus\{0,1\}=\tilde \mN_4$.
\item  If  $f\in \mF_4'$ then   $\lambda_f=N_{q^2/q}(\beta)$ where $\beta = \tfrac{r_3-r_1}{r_3-\phi(r_1)}$. Hence, $\lambda_f \in  \tilde \mN_4$. 
\item If  $f\in \mF_{2}$  then  $\lambda_f=\tfrac{\beta}{\phi(\beta)}$ where $\beta=(r_3-r_1)( \phi(r_3)- r_2)$. Hence, $\lambda_f\in  \tilde \mN_2$. 
\item If  $f\in \mF_2'$  then $\lambda_f=\tfrac{\beta}{\phi(\beta)}$ where $\beta=N_{q^4/q^2}(\phi(r_1)-r_1)$. Hence, $\lambda_f\in  \tilde \mN_2$. 
\item If  $ f \in\mF_{1}$  then  $\lambda_f=\tfrac{-\beta}{\phi(\beta)}$ where $\beta=(\phi(r_2)-r_1)(\phi^2(r_2)-r_2)$ satisfies 
\[\lambda_f^q(1-\lambda_f)=\tfrac{\beta+\phi(\beta)}{-\phi^2(\beta)}=\tfrac{r_2(\phi^2(r_2)-\phi(r_2)) -r_1(\phi^2(r_2)-\phi(r_2))}{r_2(\phi^2(r_2)-\phi(r_2)) -r_1(\phi^2(r_2)-\phi(r_2))}=1.\]
Thus, $\lambda_f \in  \tilde \mN_1$.
\end{enumerate}

Next we show that for $f \in \mF_i$, the map $f \mapsto H_i \cdot \lambda_f$ induces an injective map  $\mF_i/G\to \tilde \mN_i/H_i$ and similarly for $f \in \mF_i'$, the map $f \mapsto H_i' \cdot \lambda_f$ induces an injective map  $\mF_i'/G\to \tilde \mN_i/H_i'$. Suppose   $f, \tilde f \in \mF_i$ (resp. $\mF_i'$) with $\Lambda_f = \Lambda_{\tilde f}=H_i \cdot \lambda $ (resp $H_i' \cdot \lambda$) then there exist $(r_1, \dots, r_4) \in \text{ord}(f)$ and $(\tilde r_1, \dots, \tilde r_4) \in \text{ord}(\tilde f)$  such that $\lambda = (r_1,r_2; r_3, r_4)=(\tilde r_1,\tilde r_2;\tilde r_3, \tilde r_4)$. By Lemma \ref{cross} there exists a unique   $g \in PGL_2(\overline{\bF_q})$ which carries $(r_1,r_2,r_3,r_4)$ to $(\tilde r_1,\tilde r_2,\tilde r_3,\tilde r_4)$. However, from the definition of a restricted ordering, it follows that $\phi(g)$ also has  the same property, and hence $g \in PGL_2(q)$ by the uniqueness of $g$.  This shows that $\tilde f \in G \cdot f$. Thus, we have shown that the map $G \cdot f \mapsto \Lambda_f$ gives  injective maps   $\mF_i/G  \rightarrow \tilde \mN_i/H_i$ and  $\mF_i'/G \rightarrow \tilde \mN_i/H_2$. \\
It now remains to show that the maps $\mF_i/G \to \tilde \mN_i/H_i$ and $\mF_i'/G \to \tilde \mN_i/H_i'$ are surjective. In other words, each orbit $H_i \cdot \lambda$ in $\tilde \mN_i$ is realized as $H_i \cdot \lambda_f$ for some $f \in \mF_i$  and each orbit in $H_i' \cdot \lambda$ in $\tilde \mN_i$ is realized as $H_i' \cdot \lambda_f$ for some $f \in \mF_i'$. We do this case by case: \\

{\bf \noindent  Case of $\mF_4$ } \hfill \\  Given $\lambda \in \bF_q \setminus\{0,1\} = \tilde \mN_4$, let
\beq \label{eq:psilambda}  \psi_\lambda = XY(Y-X)(Y-\lambda X).
 \eeq
Then the form $f =  \psi_\lambda$ has $H_4 \cdot \lambda_f = H_4 \cdot \lambda$.

{\bf \noindent Case of $\mF_4'$} \hfill \\
The map $g(t) = \frac{t-\sqrt\ep}{t+\sqrt\ep}$ maps $\bF_{q^2}\setminus \{ \bF_q \cup  \{\pm \sqrt\ep\}\}$ bijectively to $\{x \in \bF_{q^2}^\times: N_{q^2/q}(x) \neq 1\}$, which in turn is mapped onto $\bF_q\setminus\{0,1\} = \tilde \mN_4$ by the map $N_{q^2/q}:\bF_{q^2}^{\times} \to \bF_q^{\times}$. Taking  $(r_1, r_2, r_3, r_4)=(\sqrt \ep, -\sqrt \ep, \alpha, \phi(\alpha))$ for $\alpha \in \bF_{q^2}\setminus \{ \bF_q \cup  \{\pm \sqrt\ep\}\}$ we get  $(r_1, r_2; r_3, r_4) = N_{q^2/q}(\tfrac{\alpha - \sqrt\ep}{\alpha+\sqrt\ep})$.  Let
\beq \label{eq:psialpha}  \psi'_\alpha = (Y^2- \ep X^2) ( Y - \alpha X)(Y - \phi(\alpha)X) , \quad  \alpha \in \bF_{q^2}\setminus\{\bF_q \cup  \{\pm \sqrt\ep\}\}. \eeq
It follows that given $\lambda \in \tilde \mN_4$,   there exists $\alpha \in \bF_{q^2}\setminus \{ \bF_q \cup  \{\pm \sqrt\ep\}\}$ such that $H_4 \cdot \psi'_\alpha = H_4 \cdot \lambda$.

{\bf \noindent  Case of $\mF_1$} \hfill \\
As noted above,  for $\bF_{q^3}=\bF_q[\theta]$, we have 
\[ \tilde \mN_1 = \{ (x, \theta; \phi(\theta), \phi^2(\theta) ) \colon  x \in \bF_q \cup \{\infty\} \}. \]
Hence, for each $\lambda \in \tilde \mN_1$, the orbit $H_1 \cdot \lambda$ can be realized as $H_1 \cdot \lambda_f$ as $f$ runs over 
\beq \label{eq:etar}  \eta_{(s,t)} = (Ys-tX) (Y -\theta X)(Y - \phi(\theta)X)(Y - \phi^2(\theta)X) , \qquad  (s,t)  \in PG(1,q). \eeq

{\bf \noindent  Case of $\mF_2$} \hfill \\
Taking  $(r_1, r_2, r_3, r_4)$ to be $(\infty, 0, r-\sqrt\ep,r+\sqrt\ep)$ for $r \in  \bF_q$, we get  $(r_1, r_2; r_3, r_4) = \tfrac{r + \sqrt\ep}{r-\sqrt\ep}$. The map $g(t) = \tfrac{t+\sqrt\ep}{t-\sqrt\ep}$ maps $\bF_q$ bijectively to $\tilde \mN_2$. 
Thus, each $H_2 \cdot \lambda$ in $\tilde \mN_2$ can be realized as 
as  $H_2 \cdot \lambda_f$  as $f$ runs over 
\beq \label{eq:upsilonr}  \upsilon_r = XY ( (Y -rX)^2 -\ep X^2), \qquad  r  \in \bF_q. \eeq

{\bf \noindent  Case of $\mF_2'$ for $q \equiv 1 \mod 4$} \hfill \\
 If $q \equiv 1 \mod 4$, let $\bF_{q^4} = \bF_q[\theta]$ where $\theta^4=\gamma$ and $\gamma$ is a generator of the cyclic group $\bF_q^{\times}$. We note that $\phi(\theta) = \imath \theta$ where $\imath \in \bF_q$ is a square root of $-1$. The map  $g(t) = \tfrac{t -  \imath \theta^2/2}{t + \imath \theta^2/2}$  maps $\bF_q$ bijectively to $\tilde \mN_2$. Therefore, it suffices to exhibit $f \in \mF_2'$ for which $\lambda_f$ is of the form $\tfrac{r -  \imath \theta^2/2}{r + \imath \theta^2/2}$.
 We claim $f = \upsilon'_r$ defined below, has this property: 
\beq \label{eq:upsilonr1mod4}   \upsilon'_r= \begin{cases}
 (Y^2- r  X^2)^2 - \gamma X^4 -  4 \sqrt{\gamma r}  X^3Y &\text{if $\ep r$ is a square in $\bF_q$,} \\
  (Y^2-\gamma r X^2)^2 - \gamma^3 X^4 -  4 \gamma^2 \sqrt{r}   X^3Y &\text{if $r \in (\bF_q^\times)^2$}.
\end{cases} \eeq

To see this, we consider $(r_1, \phi^2(r_1),\phi(r_1), \phi^3(r_1))$ with

\begin{enumerate}
\item $r_1=\theta+  t \theta^2$ where $t \in \bF_q$.
Here  $\phi(r_1) - r_1=(\imath-1)\theta - 2 t \theta^2$ and hence $\beta=N_{q^4/q^2}(\phi(r_1)-r_1)$ equals $2 \theta^2 (\imath + 2 t^2 \theta^2)$. Therefore, $\lambda_f=\tfrac{\beta}{\phi(\beta)}$ equals $\tfrac{\gamma t^2 + \imath \theta^2/2}{\gamma t^2 - \imath \theta^2/2}=\tfrac{\gamma (\imath t)^2 - \imath \theta^2/2}{\gamma (\imath t)^2 + \imath \theta^2/2}$. 
The corresponding form in $\mF_2'$ is:
\[  \upsilon'_{\ep t^2}=(Y^2-\gamma t^2 X^2)^2 - \gamma X^4 -  4 \gamma t  X^3Y.\]

\item   $r_1=t \theta^2+\theta^3$  where $t \in \bF_q^{\times}$.
Here,  we have $\phi(r_1) - r_1=-\theta^2( 2t+ (\imath+1)\theta )$ and hence $\beta=N_{q^4/q^2}(\phi(r_1)-r_1)$ equals $4 \gamma (t^2 - \imath \theta^2/2)$. Therefore, $\lambda_f=\tfrac{\beta}{\phi(\beta)}$ equals $\tfrac{ t^2 - \imath \theta^2/2}{ t^2 + \imath \theta^2/2}$. The corresponding  forms in $\mF_2'$ is:
\[ \upsilon'_{t^2}= (Y^2-\gamma t^2 X^2)^2 - \gamma^3 X^4 -  4 \gamma^2 t  X^3Y.\]

\end{enumerate}

{\bf \noindent Case of $\mF_2'$ for $q \equiv 3 \mod 4$} \hfill \\
If $q \equiv 3$ mod $4$,  let $\bF_{q^2}=\bF_q[\imath]$ where $\imath^2=-1$ and let $\bF_{q^4} = \bF_{q^2}[\theta]$ where $N_{q^2/q}(\theta^2)=-1$. Replacing $\theta^2=\theta_0 + \imath \theta_1$ with $-\theta^2$ if necessary, we may assume $\theta_0$ is a non-square in $\bF_q$. We note that $\phi(\theta) = \imath/\theta$. 
 The map  $g(r) =  \tfrac{2r +  \theta_0  +  \imath}{ 2r +  \theta_0 -  \imath }$  maps $\bF_q$ bijectively to $\tilde \mN_2$. Therefore, it suffices to exhibit $f \in \mF_2'$ for which $\lambda_f$ is of the form
$\tfrac{2r +  \theta_0 +  \imath}{ 2r +  \theta_0 -  \imath }$ for each $r \in \bF_q$.
We claim $f = \upsilon'_r$ defined below, has this property: 
\beq \label{eq:upsilonr3mod4}  \upsilon'_r=\begin{cases}
 (Y^2- r  X^2)^2 -  X^4  + 2 X^2(  (Y^2+ r  X^2)   \theta_0  + 2XY \sqrt{-r}  \theta_1)   &\!\!\text{if $r \in \{-x^2: x \in \bF_q\}$}, \\
  (Y^2+ r X^2)^2 -X^4 -  2 X^2(  (Y^2- r  X^2)  \theta_0 - 2 XY \sqrt{r}  \theta_1)  &\!\!\!\!\text{if $r \in (\bF_q^\times)^2$.}
\end{cases} \eeq
 
To see this, we consider $(r_1, \phi^2(r_1),\phi(r_1), \phi^3(r_1))$ with
\begin{enumerate}
\item $r_1=\imath(\theta+  t)$ where $t \in \bF_q$.  \\Here  $\phi(r_1) - r_1= (-2 \imath t - \imath \theta +1/\theta)$ and hence $\beta=N_{q^4/q^2}(\phi(r_1)-r_1)$ equals $-4 t^2+   2 \theta_0  +2  \imath$.  Therefore, $\lambda_f=\tfrac{\beta}{\phi(\beta)}$ equals
$\tfrac{-2 t^2 +  \theta_0  +  \imath}{ -2 t^2 +  \theta_0  -  \imath }$. The corresponding form in $\mF_2'$ is:
\[  \upsilon'_{- t^2}= (Y^2+ t^2  X^2)^2 -  X^4  + 2X^2(  (Y^2- t^2  X^2)  \theta_0  + 2 tXY   \theta_1  ). \]

\item $r_1=(-\imath t -\imath/ \theta)$ for $ t \in  \bF_q^{\times}$. Here  $\phi(r_1) - r_1= (2 \imath t + \imath/ \theta + \theta)$ and hence $\beta=N_{q^4/q^2}(\phi(r_1)-r_1)$ equals $-4 t^2- 2 \theta_0 -2  \imath$.  Therefore, $\lambda_f=\tfrac{\beta}{\phi(\beta)}$ equals
$\tfrac{2 t^2 +  \theta_0 +  \imath}{ 2 t^2 +  \theta_0 -  \imath }$. The corresponding form in $\mF_2'$ is:
\[  \upsilon'_{t^2}= (Y^2+ t^2  X^2)^2 -  X^4  - 2 X^2(  (Y^2- t^2  X^2) \theta_0 - 2t XY  \theta_1).   \]

\end{enumerate}
\eep

  We will now determine the $H_i$-orbits  on $\tilde \mN_i$. 
\begin{lem} \label{Ji_def}
The function $\jmath(\lambda)$ induces injective maps $\bar \jmath:  \mN_i/H_i \to \bF_q$. 
 The sets   $J_i = \jmath(\mN_i), i=1,2,4$ partition $\bF_q \setminus \{0, 1728\}$ and have sizes:
\beq  \label{eq:Ji_def} |J_4|=\tfrac{(q-6-\mu)}{6}, \quad |J_2|=\tfrac{q-2+\mu}{2}, \quad  |J_1|=\tfrac{q-\mu}{3},  \eeq
where  $\mu \in \{ \pm 1\}$ with  $q \equiv \mu \mod 3$.

\begin{enumerate}
\item The  map $\bar \jmath:  \mN_i/H_i \to J_i$ gives a bijection between $H_i$-orbits on $\mN_i$ and $J_i$.
\item  The  map $\bar \jmath:  \mN_4/H_2 \to J_4$ is surjective and $3$-to-$1$, and hence the number of $H_4'=H_2$-orbits on $\mN_4$ is $3 |J_4|$.
\item The $H_i, H_i'$ orbits on $\mN_i^{1728}$ are:\begin{enumerate}
		\item $\mN_1^{1728} = \emptyset$.
		\item   $\mN_4^{1728}/H_4$ consists of a single orbit $\{-1, 1/2, 2\}$.
		\item  $\mN_4^{1728}/H_2$ consists of  two orbits $\{-1\}$ and $\{1/2, 2\}$.
		\item  $\mN_2^{1728}/H_2$ consists of a single orbit $\{-1\}$. 
	 \end{enumerate}
\item   Let $q \equiv \mu \mod 3$ with $\mu \in \{\pm 1\}$. 
 The number of orbits in  $\mN_4^0/H_4$, $\mN_4^0/H_2$, $\mN_1^0/H_1$ and   $\mN_2^0/H_2$ is $(1+\mu)/2$,  $(1+\mu)/2$,  $(1+\mu)$ and  $(1-\mu)/2$. More explicitly, if $\mu=1$ then  the $H_i, H_i'$ orbits on $\mN_i^{0}$ are:
		\begin{enumerate}
		\item  $\mN_4^0/H_4$ and  $\mN_4^0/H_2$ consist of a single orbit $\{-\omega, -\omega^2\}$.
		\item  $\mN_1^0/H_1$ consists of two orbits $\{-\omega\}$ and $\{-\omega^2\}$. 
		\item  $\mN_2^0 = \emptyset$.
		\end{enumerate}
	 If $\mu=-1$ then 
		\begin{enumerate}
		\item  $\mN_4^0 = \mN_1^0 = \emptyset$. 
		\item  $\mN_2^0/H_2$  consists of a single orbit $\{-\omega, -\omega^2\}$.
		\end{enumerate}
	 \end{enumerate}
\end{lem}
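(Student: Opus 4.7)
My plan is to reduce every claim to an analysis of how the Frobenius $\phi$ acts on each fiber $\jmath^{-1}(r) \subset \overline{\bF_q}$ for $r \in \bF_q \setminus \{0, 1728\}$. By Lemma \ref{jmath_lem}, each such fiber is a single $G_*$-orbit of size $6$. Since $G_* \subset PGL_2(\bF_q)$, the Frobenius $\phi$ commutes with the action of $G_*$, so after fixing $\lambda_0 \in \jmath^{-1}(r)$ and writing every point as $g \cdot \lambda_0$ with $g \in G_*$, the map $\phi$ on the fiber is simply right-multiplication by the unique element $h_r \in G_*$ with $\phi(\lambda_0) = h_r \cdot \lambda_0$.

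The cycle structure of $\phi$ on $\jmath^{-1}(r)$ depends only on the conjugacy class of $h_r$ in $G_* \simeq S_3$, giving three mutually exclusive cases. If $h_r = 1$ then $\phi$ fixes all six points, $\jmath^{-1}(r) \subset \bF_q$, and $r \in J_4$. If $h_r$ is a transposition, let $\iota \in H_2$ denote the generator $\lambda \mapsto 1/\lambda$: the points $\lambda = g\cdot\lambda_0$ on which $\phi$ coincides with $\iota$ are exactly those with $g^{-1} h_r g = \iota$, forming a coset of the centralizer $C_{G_*}(\iota) = H_2$ of size $2$, which fills $\jmath^{-1}(r) \cap \tilde\mN_2$; hence $r \in J_2$. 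If $h_r$ is a $3$-cycle, the identical argument with the generator $\kappa \in H_1$ of $\lambda \mapsto 1/(1-\lambda)$ yields $|\jmath^{-1}(r) \cap \tilde\mN_1| = 3$, so $r \in J_1$. Consequently $J_4, J_2, J_1$ partition $\bF_q \setminus \{0, 1728\}$ and $|\jmath^{-1}(r) \cap \mN_i| = |H_i|$ for $r \in J_i$.

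To transfer this from $\tilde\mN_i$ to $\mN_i$, observe that every nontrivial element of $G_*$ has its fixed-point locus inside $\jmath^{-1}(0) \cup \jmath^{-1}(1728)$, so $H_i$ acts freely on $\mN_i$. Hence $H_i$-orbits on $\mN_i$ all have size $|H_i|$, which yields both the injectivity of $\bar\jmath$ and the bijection $\bar\jmath : \mN_i/H_i \to J_i$ of part (1), together with the size formulas $|J_i| = |\mN_i|/|H_i|$ of \eqref{eq:Ji_def}. The $3$-to-$1$ statement of part (2) is then immediate: a $G_*$-orbit of size $6$ in $\mN_4$ splits under the index-$3$ subgroup $H_2$ into three orbits of size $2$, since $\iota$ has no fixed points in $\mN_4$ (the fixed locus $\{\pm 1\}$ being excluded).

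The remaining parts (3) and (4) reduce to a direct case inspection of $\{-1, \tfrac12, 2, -\omega, -\omega^2\}$. For (3), the Frobenius conditions defining $\tilde\mN_i$ give $\mN_4^{1728} = \{-1, \tfrac12, 2\}$, $\mN_2^{1728} = \{-1\}$ (since $(1/2)^{q+1} = 1/4 \ne 1$ in characteristic $\ne 3$), and $\mN_1^{1728} = \emptyset$ (since $(-1)^{q+1}-(-1)^q+1 = 3 \ne 0$, and likewise for $\tfrac12, 2$); the $H_i$/$H_i'$-orbits are read off directly from the explicit actions of $\iota$ and $\kappa$. For (4), one uses $\omega \in \bF_q \iff \mu = 1$: when $\mu = 1$, both $-\omega, -\omega^2$ lie in $\bF_q$ and are $\kappa$-fixed, so they lie in $\tilde\mN_4 \cap \tilde\mN_1$ but not $\tilde\mN_2$; when $\mu = -1$, Frobenius swaps $-\omega$ and $-\omega^2$, matching $\iota$ but not $\kappa$, so both lie in $\tilde\mN_2$ alone. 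The orbit structures on these two-element sets then follow. The only real obstacle is setting up the Frobenius-as-right-multiplication picture cleanly; after that, everything reduces to conjugacy-class arithmetic in $S_3$ together with the inspection just described.
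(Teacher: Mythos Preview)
Your argument is correct and takes a cleaner, more conceptual route than the paper's. The paper establishes disjointness of the $J_i$ by noting that $\mN_1, \mN_2, \mN_4$ live in pairwise distinct extensions of $\bF_q$, then verifies $J_i \subset \bF_q$ via explicit norm-type formulas for $\jmath(\lambda)$, and finally proves injectivity of $\bar\jmath$ by a case-by-case check that an element of $G_* \setminus H_i$ cannot carry $\mN_i$ into itself. Your approach unifies all of this: recognizing that membership in $\tilde\mN_i$ is precisely the condition $\phi(\lambda) = h_i(\lambda)$ for the distinguished generator $h_i$ of $H_i$, you reduce the whole classification to conjugacy-class arithmetic in $G_* \simeq S_3$, namely that on the simply transitive $G_*$-fiber $\jmath^{-1}(r)$ the Frobenius acts as right-multiplication by a fixed $h_r$, and this coincides with left-multiplication by $h_i$ on exactly one left $H_i$-coset (when $h_r$ is conjugate to $h_i$) and nowhere otherwise. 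That single observation yields the partition, the fiber sizes $|\jmath^{-1}(r)\cap\mN_i|=|H_i|$, and the single-$H_i$-orbit structure simultaneously. Two small points worth tightening: the conjugation should read $g\,h_r\,g^{-1}=\iota$ rather than $g^{-1}h_r g=\iota$ (your subsequent free-action argument makes this harmless, since you only use the coset \emph{size}); and you should state at the outset that $\jmath(\tilde\mN_i)\subset\bF_q$, which is immediate from $\phi(\lambda)\in G_*\cdot\lambda$ and the $G_*$-invariance of $\jmath$, so that your fiber-by-fiber analysis over $r\in\bF_q$ genuinely exhausts $\mN_i$.
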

\begin{proof}
Since $\jmath(\lambda)$ is constant on $G_*$-orbits and $H_i \subset G_*$,  we get induced maps $\bar \jmath:  \mN_i/H_i \to \overline{\bF_q}$. We first show that $J_1, J_2, J_4$ are disjoint subsets of $\bF_q$:
 We note that the sets $\mN_1 \subset \bF_{q^3}\setminus \bF_q$, $\mN_2 \subset \bF_{q^2}\setminus \bF_q$ and $\mN_4 \subset  \bF_q$. Therefore, if  $\lambda_i \in \mN_i$, $\lambda_j \in \mN_j$ with $i \neq j$,     the $G_*$-orbits  $G_*(\lambda_i)$ and $G_*(\lambda_j)$ are distinct. Thus, $J_1, J_2, J_4$ are disjoint sets of $\overline{\bF_q}$. Clearly $\mN_4 \subset \bF_q$ hence $J_4 \subset \bF_q\setminus \{0, 1728\}$.
For $\lambda \in \mN_1$ we can write the expression for $\jmath(\lambda)$ as 
$\jmath(\lambda)=256\,  N_{q^3/q}(\lambda-\phi(\lambda) )
$  which shows that $J_1 \subset \bF_q \setminus \{0,1728\}$. For $\lambda \in \mN_2$ we can write the expression for $\jmath(\lambda)$ as 
\[ \jmath(\lambda)=256\tfrac{(\phi(\lambda) +\lambda-1)^3}{\phi(\lambda)+\lambda -2}
,\]
which shows that $J_2 \subset \bF_q \setminus \{0,1728\}$.\\

Next we show that $\bar \jmath:\mN_i/H_i \to \bF_q$  is injective. If $i=4$ then $H_4=G_*$ and hence $\bar \jmath$ is injective. We now assume $\{i,j\}=\{1,2\}$. 
Suppose  $g \in G_*$ and $\lambda \in \mN_i$ such that $g(\lambda) \in \mN_i$, then we must show 
$g(\lambda)=h(\lambda)$ for some $h \in H_i$.  Since $G_* = H_iH_j$, we may assume $g \in H_j$. If $g$ is the identity element of $H_j$ then we take $h$ to be the identity element of $H_i$.  So we assume $g$ is a nontrivial element of $H_j$. If $i=1$ then $g(\lambda)=1/\lambda$ does not satisfy  $x^{q+1}- x^q+1=0$  unless $q \equiv 1 \mod 3$ and $\lambda \in  \{-\omega, -\omega^2\}$ which are not in $\mN_1$. If $i=2$ then $g(\lambda)$ is  $1/(1-\lambda)$ or $\lambda/(\lambda-1)$ which do not satisfy the equation $N_{q^2/q}(x)=1$ 
 unless $q \equiv 2 \mod 3$ and $\lambda \in  \{-\omega, -\omega^2\}$ which are not in $\mN_2$.\\

Finally, we show that $J_1,J_2,J_4$ partition $\bF_q \setminus \{0,1728\}$. As shown above, the function  $\jmath(\lambda)$ is  injective on $\mN_i/H_i$. Therefore $|J_i|=|\mN_i/H_i|$.  Since $\mN_4$ has size $(q-6-\mu)$ and each $H_4=G_*$ orbit has size $6$ we see that $ |J_4|=\tfrac{(q-6-\mu)}{6}$. Similarly, $|\mN_2| = (q-2+\mu)$,   $|\mN_1| = (q-\mu)$,  each $H_2$ orbit on $\mN_2$ has size $2$,  and each $H_1$-orbit on  $\mN_1$ orbit has size $3$. Therefore, $|J_2|=|\mN_2/H_2| =(q-2+\mu)/2$ and   $|J_1|=|\mN_1/H_1| =(q-\mu)/3$.
 Since $|J_4|+|J_2|+|J_1|=(q-2)$ we see that the sets $J_4, J_2,J_1$ partition  $\bF_q \setminus \{0,1728\}$. This completes the proof of part (1) of the lemma. \\
 Since $\mN_4/H_2 \to \mN_4/H_4$ is a $3$-to-$1$ map, it follows that $\mN_4/H_2 \to J_4$ is a $3$-to-$1$ surjective map. This proves part (2).\\
As for parts $(3)-(4)$, the $H_i, H_i'$-orbits of $\mN_i^{1728}$ and $\mN_i^0$ readily follow from the fact that  $\mN_i^{1728}$ equals i) $\{-1,1/2,2\}$ if $i=4$, ii) $\{-1\}$ if $i=2$, and $\emptyset$ if $i=1$. Similarly,   $\mN_4^0, \mN_2^0, \mN_1^0$ equal $ \{-\omega, -\omega^2\}$, $\emptyset$,  and $\{-\omega, -\omega^2\}$ if $q \equiv 1 \mod 3$ and they equal $\emptyset$, $\{-\omega, -\omega^2\}$ , $\emptyset$ if $q \equiv 2 \mod 3$.
\end{proof}

\begin{lem}\label{Ji_lem} The  $(q-2)=|J_4|+|J_2| +|J_1|$ orbits in $\mF_4 \cup \mF_2 \cup \mF_1$ are represented by $G \cdot \mathcal E_r$ for $r \in J_4 \cup J_2 \cup J_1=\bF_q\setminus\{0,1728\}$, where
\beq \label{eq:E_r} \mathcal E_r(X,Y)= X(\tfrac{256}{27}(1-\tfrac{1728}{r})Y^3-4 Y X^2 + X^3), \qquad r \in \bF_q\setminus\{0,1728\}.  \eeq
\end{lem}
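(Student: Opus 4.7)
The plan is to verify by direct computation that $\jmath(\mathcal{E}_r)=r$ for every $r \in \bF_q \setminus \{0,1728\}$, and then to read off the conclusion from Proposition \ref{Lambda_prop} together with Lemma \ref{Ji_def}.

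First I would compute $I(\mathcal{E}_r)$ and $J(\mathcal{E}_r)$ using \eqref{eq:I_invariant}--\eqref{eq:J_invariant}. Written in the standard normalization, the coefficients of $\mathcal{E}_r$ are
\[ (z_0,z_1,z_2,z_3,z_4) = \bigl(0,\, -\tfrac{64}{27}(1-\tfrac{1728}{r}),\, 0,\, 1,\, 1\bigr), \]
which gives $I(\mathcal{E}_r) = \tfrac{256}{81}(1-\tfrac{1728}{r})$ and $J(\mathcal{E}_r) = -\tfrac{4096}{729}(1-\tfrac{1728}{r})^{2}$. The coefficient $\tfrac{256}{27}(1-\tfrac{1728}{r})$ in the definition of $\mathcal{E}_r$ is tailored so that, via the numerical identity $256^{3}\cdot 729^{2}=4096^{2}\cdot 81^{3}$, the ratio $I^{3}/J^{2}$ telescopes to $(1-\tfrac{1728}{r})^{-1}=r/(r-1728)$. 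Substituting in \eqref{eq:jfdef} yields $\jmath(\mathcal{E}_r)=r$. Moreover $\Delta(\mathcal{E}_r)=I^{3}-J^{2}=I^{3}\cdot\tfrac{1728}{r}$ is nonzero whenever $r\in\bF_q\setminus\{0,1728\}$, and since $X$ is visibly a linear factor of $\mathcal{E}_r$ over $\bF_q$, we see $\mathcal{E}_r \in \mF_4 \cup \mF_2 \cup \mF_1$ (it cannot lie in $\mF_2'$ or $\mF_4'$, whose elements have no linear factor over $\bF_q$).

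With $\jmath(\mathcal{E}_r)=r$ in hand, the classification follows immediately. Proposition \ref{Lambda_prop} combined with Lemma \ref{Ji_def}(1) asserts that for each $i\in\{1,2,4\}$ the map $f\mapsto \jmath(f)$ induces a bijection between the $G$-orbits on $\mF_i\cap \jmath^{-1}(J_i)$ and the set $J_i$. Since $J_1,J_2,J_4$ partition $\bF_q\setminus\{0,1728\}$, this yields exactly $(q-2)$ distinct $G$-orbits in $\mF_4\cup\mF_2\cup\mF_1$ whose $\jmath$-invariant avoids $\{0,1728\}$, and the orbit with $\jmath$-invariant $r$ is represented by $\mathcal{E}_r$. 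The only substantive step is the invariant computation, which is mechanical; no serious obstacle arises.
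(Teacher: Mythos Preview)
Your proof is correct, but it follows a different route from the paper's. The paper argues constructively via the cubic resolvent: starting from an arbitrary $f\in\mF_4\cup\mF_2\cup\mF_1$, it uses the discussion around \eqref{eq:resolvent} to bring $f$ into the form $XR_f(X,Y)$ over $\bF_q$, and then a further diagonal substitution (available precisely when $I(f),J(f)\neq 0$) to reach $\mathcal E_{\jmath(f)}$. Your approach instead verifies $\jmath(\mathcal E_r)=r$ by direct computation of $I,J$, and then invokes the orbit parametrization already established in Proposition~\ref{Lambda_prop} and Lemma~\ref{Ji_def}(1) to conclude uniqueness. Your argument is shorter once those prior results are in hand; the paper's argument is more self-contained and, as a byproduct, makes explicit that $\mathcal E_r\in\mF_i$ exactly when the cubic $Z^3-4Z^2+\tfrac{256}{27}(1-\tfrac{1728}{r})$ has $i-1$ roots in $\bF_q$, which is the alternative description of $J_i$ quoted in the Introduction.
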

\begin{proof}
We recall from \eqref{eq:resolvent} that, for a  quartic form $f(X,Y)$ over $\bF_q$ and $K \supset \bF_q$ any field over which $f$ has a linear factor, there is a $g \in PGL_2(K)$ such that $f = g \cdot  X R_f$ where  $R_f(X,Y) =-4 Y^3 +3 I(f) YX^2 -J(f) X^3$. For $f \in \mF_4 \cup \mF_2 \cup \mF_1$,   $f$ has a linear factor over $\bF_q$ 
and hence we can take $K=\bF_q$. If $\jmath(f) \neq 0, 1728$, equivalently if $I(f)$ and  $J(f)$ are  non-zero, then  replacing $R_f$ with $h \cdot R_f$ where $h(t)=\tfrac{3I(f)}{4J(f)} t$ we get 
\[ G \cdot f = G \cdot X \left(\tfrac{256 J(f)^2}{27 I(f)^3}Y^3-4 Y X^2 + X^3\right). \]
Further assuming $\Delta(f)  = I^3(f)-J^2(f) \neq 0$ we can rewrite the above equation as 
\[ G \cdot f = G \cdot X \left(\tfrac{256}{27}(1-\tfrac{1728}{\jmath(f)})Y^3-4 Y X^2 + X^3\right), \qquad \jmath(f) \neq 0, 1728, \infty. \]
In terms of $Z=X/Y$, it is easy to see that the number of values of $c \in \bF_q \setminus\{0, -\tfrac{256}{27}\}$  for which the equation  $Z^3 -4  Z^2=c$  has $(i-1)$  roots in $\bF_q$ where $i=4,2$ and $1$,  equals   $(q-6-\mu)/6, (q-2+\mu)/2$ and $(q-\mu)/3$, respectively. In other words, of these $(q-2)$ orbits, there are $|J_i|$ orbits in $\mF_i$ for $i=1,2,4$.
\end{proof}

\begin{prop} \label{stabilizer_prop} Given $f$ with $\Delta(f) \neq 0$, let $(r_1,r_2,r_3,r_4)$ be a fixed restricted ordering of the roots of $f$, and let 
 $\lambda_f$ denote the cross-ratio of this ordering. Let $F$ be a splitting field of $f$ over $\bF_q$. We recall from  Lemma \ref{stabilizer_lemma} the injective homomorphism $\sigma:\text{Stab}_F(f) \to  S_4$ and the transposition  $\tilde \sigma_3 \in S_4$ 
 given by  $(12), (13), (14)$ according as $\lambda_f=-1,2,1/2$, respectively. Let $\sigma_{\phi} \in S_4$ be as in Lemma \ref{ZS4}. We identify Stab$_F(f)$  with its image in $S_4$ under the monomorphism $\sigma:\text{Stab}_F(f) \hookrightarrow S_4$.  The  stabilizer group Stab$(f)$ equals the centralizer of $\sigma_\phi$ in $\mathrm{Stab}_F(f)$
\[ \mathrm{Stab}(f)=Z_{\mathrm{Stab}_F(f)}(\sigma_\phi), \] 
and is isomorphic to:\\

In case $\jmath(f) \neq 0, 1728$:
\begin{enumerate}
\item[(1)]$ \langle \sigma_1, \sigma_2 \rangle \simeq \bZ/2 \bZ \times \bZ/2 \bZ$ if  $f \in \mF_4$,
\item[(2)]$ \langle \sigma_1, \sigma_2 \rangle \simeq \bZ/2 \bZ \times \bZ/2 \bZ$ if  $f \in   \mF_4'$,
\item[(3)]  $\langle \sigma_1\rangle   \simeq \bZ/2 \bZ$ if  $f \in \mF_2$,
\item[(4)]  $\langle \sigma_1\rangle   \simeq \bZ/2 \bZ$ if  $f \in  \mF_2'$,
\item[(5)] \emph{the trivial group}  if  $f \in \mF_1$.\\
\end{enumerate}
In case  $\jmath(f) =1728$:
\begin{enumerate}
\item[(i)] $ \langle \sigma_1, \sigma_2, \tilde\sigma_3  \rangle \simeq D_4$ if $f \in \mF_4$,
\item[(ii)] $ \langle \sigma_1, \sigma_2, \tilde\sigma_3  \rangle \simeq D_4$ if $f \in \mF_4'$ and $\lambda_f=-1$,
\item[(iii)]$ \langle \sigma_1, \sigma_2 \rangle \simeq \bZ/2 \bZ \times \bZ/2 \bZ$ if 
$f \in \mF_4'$ and $\lambda_f \in \{2,1/2\}$,
\item[(iv)]   $\langle \sigma_1, \sigma_3 \rangle   \simeq \bZ/2 \bZ \times \bZ/2 \bZ$ if  $f \in \mF_2$,
\item[(v)]  $\langle \sigma_3  \sigma_2 \rangle \simeq \bZ/4 \bZ$ if  $f \in \mF_2'$.\\
\end{enumerate}
In case  $\jmath(f) =0$: 
\begin{enumerate}
\item[(a)] $\langle \sigma_1, \sigma_2, \sigma_4 \rangle   \simeq A_4$ if 
$f \in \mF_4$,
\item[(b)] $\langle \sigma_1, \sigma_2 \rangle \simeq \bZ/2 \bZ \times \bZ/2 \bZ$ if  $f \in \mF_4'$,
\item[(c)] $\langle \sigma_1 \rangle   \simeq \bZ/2 \bZ$  if  $f \in \mF_2$,
\item[(d)] $\langle \sigma_1 \rangle   \simeq \bZ/2 \bZ$  if  $f \in \mF_2'$,
\item[(e)] $\langle \sigma_4 \rangle   \simeq \bZ/3 \bZ$  if  $f \in \mF_1$.
\end{enumerate}
\end{prop}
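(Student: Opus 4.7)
The plan is to reduce the statement to two inputs that are already in hand: the structure of $\mathrm{Stab}_F(f)$ inside $S_4$ from Lemma~\ref{stabilizer_lemma}, and the Frobenius permutation $\sigma_\phi$ together with its centralizer in $S_4$ from Lemma~\ref{ZS4}. The bridge between them is the identity $\mathrm{Stab}(f) = Z_{\mathrm{Stab}_F(f)}(\sigma_\phi)$ asserted by the proposition, which I would prove by showing that the Galois action of $\phi$ on $\mathrm{Stab}_F(f)$ is intertwined by the embedding $\sigma:\mathrm{Stab}_F(f)\hookrightarrow S_4$ with conjugation by $\sigma_\phi$. Concretely, for $g\in\mathrm{Stab}_F(f)$ with $g(r_i)=r_{\sigma_g(i)}$,
\[
\phi(g)(r_i) \;=\; \phi\bigl(g(\phi^{-1}(r_i))\bigr) \;=\; \phi\bigl(g(r_{\sigma_\phi^{-1}(i)})\bigr) \;=\; r_{\sigma_\phi\sigma_g\sigma_\phi^{-1}(i)},
\]
so $\sigma_{\phi(g)}=\sigma_\phi\sigma_g\sigma_\phi^{-1}$. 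Since $g\in PGL_2(q)$ iff $\phi(g)=g$ and $\sigma$ is injective, $g\in\mathrm{Stab}(f)$ iff $\sigma_g$ centralizes $\sigma_\phi$, which is the asserted identity.

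Once this identity is in place, every item of the proposition is read off by intersecting the image of $\mathrm{Stab}_F(f)$ in $S_4$ with the centralizer displayed in \eqref{eq:ZS4}. For $\jmath(f)\neq 0,1728$ the stabilizer is the Klein four group $V=\langle\sigma_1,\sigma_2\rangle$; this is contained in the centralizer both for $\mF_4$ (trivial $\sigma_\phi$) and for $\mF_4'$ (where $Z=\langle\sigma_1,\sigma_2,\sigma_3\rangle$), giving $V$ itself, while it intersects $\langle\sigma_1,\sigma_3\rangle$ in $\langle\sigma_1\rangle$ for $\mF_2$, intersects $\langle\sigma_3\sigma_2\rangle$ in $\langle\sigma_1\rangle$ for $\mF_2'$ (using $(\sigma_3\sigma_2)^2=\sigma_1$), and meets $\langle\sigma_4\rangle$ trivially for $\mF_1$.

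For $\jmath(f)=1728$ the stabilizer enlarges to a dihedral group of order $8$ whose extra generator $\tilde\sigma_3$ depends on $\lambda_f\in\{-1,\tfrac12,2\}$; the case split $(ii)$–$(iii)$ for $\mF_4'$ reflects precisely the fact that $\tilde\sigma_3$ is centralized by $\sigma_\phi=\sigma_1$ only when $\tilde\sigma_3=\sigma_3$, i.e.\ $\lambda_f=-1$ (in the other two cases the cosets $V\tilde\sigma_3$ and $V\sigma_3$ are disjoint, and the intersection drops back to $V$). For $\mF_2$ and $\mF_2'$ the only element of $\tilde\mN_2$ with $\jmath=1728$ is $-1$, so $\tilde\sigma_3=\sigma_3$ is forced, after which the intersections $\langle\sigma_1,\sigma_3\rangle$ (for $\mF_2$) and $\langle\sigma_3\sigma_2\rangle\simeq\mathbb{Z}/4\mathbb{Z}$ (for $\mF_2'$) follow, using once more that $\sigma_3\sigma_2\in\langle\sigma_1,\sigma_2,\sigma_3\rangle$. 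Finally, for $\jmath(f)=0$ the stabilizer is $A_4$ and each intersection is decided by parity alone: $\sigma_1,\sigma_2,\sigma_4$ are even permutations while $\sigma_3$ and $\sigma_3\sigma_2=(1324)$ are odd, immediately yielding the five cases $(a)$–$(e)$.

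I expect no deep obstacle in this proof; the one point that demands care is consistent book-keeping of the \emph{same} restricted ordering $(r_1,\dots,r_4)$ when reading off both $\sigma_\phi$ (from Lemma~\ref{ZS4}) and $\tilde\sigma_3$ (from Lemma~\ref{stabilizer_lemma}), since the labels $(12),(13),(14)$ of $\tilde\sigma_3$ are only meaningful relative to a fixed ordering. Once a restricted ordering is fixed the proposition becomes a mechanical tabulation.
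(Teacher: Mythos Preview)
Your proposal is correct and follows essentially the same approach as the paper: both first establish $\mathrm{Stab}(f)=Z_{\mathrm{Stab}_F(f)}(\sigma_\phi)$ via the conjugation identity $\sigma_{\phi(g)}=\sigma_\phi\sigma_g\sigma_\phi^{-1}$, and then compute $\mathrm{Stab}_F(f)\cap Z_{S_4}(\sigma_\phi)$ case by case using Lemma~\ref{stabilizer_lemma} and \eqref{eq:ZS4}. Your use of parity to dispatch the $\jmath(f)=0$ cases in one stroke is a minor streamlining of the paper's explicit case-by-case intersections, but otherwise the arguments coincide.
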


\begin{proof}
For $g \in \text{Stab}_F(f)$ to be in $PGL_2(q)$ we must have $\phi(g)=g$ which is equivalent to $\sigma_{\phi(g)}=\sigma_g$. Since $\sigma_{\phi(g)} = \sigma_\phi \sigma_g (\sigma_\phi)^{-1}$ we get
\[ \text{Stab}(f) = \{ g \in \text{Stab}_F(f) : \sigma_\phi \sigma_g (\sigma_\phi)^{-1}= \sigma_g\} =Z_{\text{Stab}_F(f)}(\sigma_\phi). \]
If $f \in \mF_4$ then $F = \bF_q$ and hence $\text{Stab}(f) = \text{Stab}_F(f)$ is as given in  Lemma \ref{stabilizer_lemma}:
\begin{description}
\item[--]  $ \langle \sigma_1, \sigma_2 \rangle$ if $\jmath(f) \neq 0,1728$,
\item[--]  $ \langle \sigma_1, \sigma_2, \tilde\sigma_3  \rangle$  if $\jmath(f)=1728$,
\item[--]  $ \langle \sigma_1, \sigma_2, \sigma_4  \rangle$ if  $\jmath(f)=0$.
\end{description}

This proves the  assertions about $\mF_4$ in $1), i), a)$. Using the above calculation of $ \text{Stab}_F(f)$  and the calculation of  $Z_{S_4}(\sigma_\phi)$ in \eqref{eq:ZS4}, we can immediately determine  Stab$(f) = \text{Stab}_F(f) \cap Z_{S_4}(\sigma_\phi)$.  If $f \in \mF_4'$ then Stab$(f)=\text{Stab}_F(f) \cap \langle \sigma_1, \sigma_2,  \sigma_3 \rangle$. Thus, Stab$(f)$ equals
\begin{description}
\item[--]  $ \langle \sigma_1, \sigma_2, \sigma_3  \rangle$  if $\lambda_f=-1$,
\item[--]  $ \langle \sigma_1, \sigma_2 \rangle$ if $\lambda_f \in \{2,1/2\}$,
\item[--]  $ \langle \sigma_1, \sigma_2 \rangle$ if  $\jmath(f)=0$,
\item[--]  $ \langle \sigma_1, \sigma_2 \rangle$ if $\jmath(f) \neq 0,1728$.
\end{description}
This proves assertions 2), ii)-iii), b). If $f\in \mF_2$ then Stab$(f)=\text{Stab}_F(f) \cap \langle \sigma_1,  \sigma_3 \rangle$. Thus, Stab$(f)$ equals
\begin{description}
\item[--]  $ \langle \sigma_1 \rangle$ if $\jmath(f)=0$, 
\item[--]  $ \langle \sigma_1, \sigma_3 \rangle$ if  $\jmath(f)=1728$,
\item[--]  $ \langle \sigma_1 \rangle$ if $ \jmath(f) \neq 0,1728$.
\end{description}
This proves assertions 3), iv), c). If $f\in \mF_2'$ then Stab$(f)=\text{Stab}_F(f) \cap 
\langle \sigma_3 \sigma_2 \rangle$ (where  $\sigma_3 \sigma_2= \langle (1324)  \rangle$).  Thus, Stab$(f)$ equals
\begin{description}
\item[--]  $ \langle \sigma_1 \rangle$ if $\jmath(f)=0$, 
\item[--]  $\langle \sigma_3 \sigma_2 \rangle$  if  $\jmath(f)=1728$,
\item[--]  $ \langle \sigma_1 \rangle$ if $ \jmath(f) \neq 0,1728$.
\end{description}
This proves assertions 4), v), d). If $f\in \mF_1$ then Stab$(f)=\text{Stab}_F(f) \cap 
\langle \sigma_4  \rangle$ (where  $\sigma_4 = \langle (234)  \rangle$).  Thus Stab$(f)$ equals
\begin{description}
\item[--]  $ \langle \sigma_4 \rangle$ if $\jmath(f)=0$, 
\item[--]  trivial  if $ \jmath(f) \neq 0,1728$.
\end{description}
This proves assertions 5), e).
 \end{proof}

\subsection{ Proof of Theorem \ref{result2} }
The forms $f$ with $\Delta(f) \neq 0$ have been partitioned  into the sets  $\mF_1 \cup \mF_2 \cup \mF_4 \cup \mF_2' \cup \mF_4'$.  The size of each orbit $G \cdot f$ is $|G|/|\text{Stab}(f)|$ where  $|\text{Stab}(f)|$  is given in Proposition \ref{stabilizer_prop}. The number of orbits in each $\mF_i$, $\mF_i'$ can be determined from Proposition \ref{Lambda_prop}: The $G$-orbits in $\mF_i$ (resp. $\mF_i'$) are in bijective correspondence with the $H_i$-orbits (resp. $H_i'$-orbits) in $\tilde \mN_i$.
The $H_i$-orbits (resp. $H_i'$-orbits) in $\tilde \mN_i$ have in turn been calculated in  Lemma \ref{Ji_def}. Putting these together, we get:
\begin{enumerate}
\item The $G$-orbits in $\mF_4$ are as follows:\\

\begin{tabular}{c| *{6}{c}}
$\jmath$   & $|G|$ & $|G|/2$ & $|G|/3$ & $|G|/4$ & $|G|/12$   &  $|G|/8$  \\
&&&&&&\\
$\jmath \in J_4$   & $0$ & $0$ & $0$  & $1$ &  $0$ &   $0$  \\ 
$\jmath=1728$ & $0$ & $0$ & $0$ & $0$ & $0$ &  $1$  \\
$\jmath=0$  & $0$ & $0$ & $0$ & $0$ &  $\tfrac{1+\mu}{2}$  &  $0$ 
\end{tabular}
\[ \]
\item  The $G$-orbits in $\mF_4'$ are as follows:\\

\begin{tabular}{c| *{6}{c}}
$\jmath$   & $|G|$ & $|G|/2$ & $|G|/3$ & $|G|/4$ & $|G|/12$   &  $|G|/8$  \\
&&&&&&\\ 
$\jmath \in J_4$   & $0$ & $0$ & $0$  & $3$ &  $0$ &   $0$  \\ 
$\jmath=1728$ & $0$ & $0$ & $0$ & $1$ & $0$ &  $1$  \\
$\jmath=0$  & $0$ & $0$ & $0$ & $\tfrac{1+\mu}{2}$ &  $0$  &  $0$  \\ [1ex]
\end{tabular}
\[ \]
\item  The $G$-orbits in $\mF_2$ are as follows:\\

\begin{tabular}{c| *{6}{c}}
$\jmath$   & $|G|$ & $|G|/2$ & $|G|/3$ & $|G|/4$ & $|G|/12$  &  $|G|/8$  \\
&&&&&&\\
$\jmath \in J_2$   & $0$ & $1$ & $0$  & $0$ &  $0$ &   $0$ \\ 
$\jmath=1728$ & $0$ & $0$ & $0$ & $1$ & $0$ &  $0$  \\
$\jmath=0$  & $0$ & $\tfrac{1-\mu}{2}$ & $0$ & $0$ &  $0$  &  $0$ 
\end{tabular}
\[ \]
\item  The $G$-orbits in $\mF_2'$ are as follows:\\

\begin{tabular}{c| *{6}{c}} \label{tableF4}
$\jmath$   & $|G|$ & $|G|/2$ & $|G|/3$ & $|G|/4$ & $|G|/12$  &  $|G|/8$  \\
&&&&&&\\ 
$\jmath \in J_2$   & $0$ & $1$ & $0$  & $0$ &  $0$ &   $0$ \\ 
$\jmath=1728$ & $0$ & $0$ & $0$ & $1$ & $0$ &  $0$  \\
$\jmath=0$  & $0$ & $\tfrac{1-\mu}{2}$ & $0$ & $0$ &  $0$  &  $0$ 
\end{tabular}
\[ \]
\item  The $G$-orbits in $\mF_1$ are as follows:\\

\begin{tabular}{c| *{6}{c}}
$\jmath$   & $|G|$ & $|G|/2$ & $|G|/3$ & $|G|/4$ & $|G|/12$  &  $|G|/8$  \\
&&&&&&\\
$\jmath \in J_1$ & $1$  & $0$ & $0$  & $0$ &  $0$ &   $0$ \\ 
$\jmath=1728$ & $0$ & $0$ & $0$ & $0$ & $0$ &  $0$  \\
$\jmath=0$  & $0$ & $0$& $1+\mu$  & $0$ &  $0$  &  $0$ 
\end{tabular} 
\[\]
\end{enumerate}
Combining the tables of orbits for $\mF_{4}, \mF_4', \mF_{2}, \mF_2'$ and $\mF_{1}$ above, we get the table below of orbits for all $f$ with $\Delta(f) \neq 0$, which is exactly Table \ref{table:1}.\\

\begin{tabular}{c| *{6}{c}}
   & $|G|$ & $|G|/2$ & $|G|/3$ & $|G|/4$ & $|G|/12$   &  $|G|/8$  \\
&&&&&&\\  \hline 
&&&&&&\\
$\jmath(f)\in J_4$ & $0$ & $0$ & $0$ & $4$ & $0$ &  $0$  \\
$\jmath(f)\in J_2$ & $0$ & $2$ & $0$ & $0$ & $0$ &  $0$  \\
$\jmath(f)\in J_1$ & $1$ & $0$ & $0$ & $0$ & $0$ &  $0$  \\
$\jmath(f)=1728$ & $0$ & $0$ & $0$ & $3$ & $0$ &  $2$  \\
$\jmath(f)=0$  & $0$ & $(1-\mu)$ & $(1+\mu)$ & $\tfrac{1+\mu}{2}$ &  $\tfrac{1+\mu}{2}$  &  $0$\\
&&&&&&\\
\hline
\end{tabular}
\[\]

In Table \ref{table:3} we list  representatives for each of the $(2q+2+\mu)$ orbits of binary quartic forms with non-zero discriminant, and  the isomorphism class of the stabilizer of each orbit (obtained in  Proposition \ref{stabilizer_prop}).  We determine the orbit representatives by refining the calculations in Proposition \ref{Lambda_prop} and Lemma \ref{Ji_lem}.

\begin{table}[b]
\begin{tabular}{||c|c|c|c||}
\hline
Representative  of $\mO$ & $\jmath(\mO)$ & Type & Stabilizer    \\
&&&\\  
\hline  
&&&\\  
$\mathcal E_r, \; r \in J_4$ & $r \in J_4$ & $\mF_4$&  $\bZ/2\bZ \times \bZ/2\bZ$ \\
&&&\\

$\mathcal E_r, \;  r \in J_2$& $r \in J_2$ & $\mF_2$&  $\bZ/2\bZ$ \\
&&&\\  
$\mathcal E_r, \; r \in J_1$& $r \in J_1$ &$ \mF_1$&  trivial \\
&&&\\  
\hline 
&&&\\  
$XY(Y-X)(Y+X) $ & $1728$ & $\mF_4$ & $D_4$ \\
$X( Y^3  -X^3), \; q \equiv 1 \mod 3$ & $0$ & $\mF_4$ & $A_4$\\
$XY(Y^2-\ep X^2)$ & $1728$ & $\mF_2$ & $\bZ/2\bZ \times \bZ/2\bZ$ \\
$X( Y^3  -X^3), \; q \equiv -1 \mod 3 $ & $0$ & $\mF_2$ & $\bZ/2\bZ$\\
$X( Y^3  -\gamma X^3), \; q \equiv 1 \mod 3 $ & $0$ & $\mF_1$ & $\bZ/3\bZ$\\
$X( Y^3  -\gamma^2 X^3), \; q \equiv 1 \mod 3$ & $0$ & $\mF_1$ & $\bZ/3\bZ$\\
&&&\\  
\hline 
&&&\\  
$ \psi'_{\alpha_{-1}}$  &  $1728$ & $\mF_4'$ & $D_4$  \\
$ \psi'_{\alpha_{\{2, 1/2\}}} $  &  $1728$ & $\mF_4'$ & $\bZ/2\bZ \times \bZ/2\bZ$  \\
$ \psi'_{\alpha_{\{-\omega, -\omega^2\}}}, \; q \equiv 1 \mod 3$  &  $0$ & $\mF_4'$ & $\bZ/2\bZ \times \bZ/2\bZ$  \\
&&&\\  
$ \psi'_{\alpha_{\{\lambda, \lambda^{-1}\}}}, \; \{\lambda, \lambda^{-1}\} \in \mN_4/H_2$   &  $\jmath(\lambda) \in J_4$ & $\mF_4'$ & $\bZ/2\bZ \times \bZ/2\bZ$ \\  
these are $3|J_4|$ orbits&&&\\  
\hline
&&&\\ 
$\upsilon'_{t_{-1}}, \quad q \equiv 1 \mod 4$ & $1728$ &  $\mF_2'$ &  $\bZ/4\bZ$\\
&&&\\  
$\upsilon'_{t_{\{-\omega, -\omega^2\}}},  \quad q \equiv 5 \mod 12$  & $0$& $\mF_2'$ &$\bZ/2\bZ$\\
&&&\\  
$\upsilon'_{t_{\{\lambda, \lambda^{-1}\}}}, \quad \{\lambda, \lambda^{-1}\} \in \mN_2/H_2$
& $ \jmath(\lambda) \in J_2$& $\mF_2'$ &$\bZ/2\bZ$\\
$q \equiv 1 \mod 4$ these are $|J_2|$ orbits 
&&&\\
\hline  
&&&\\
$\upsilon'_{t_{-1}},  \quad  q \equiv 3 \mod 4$ & $1728$ &  $\mF_2'$ &  $\bZ/4\bZ$\\
&&&\\  
$\upsilon'_{t_{\{-\omega, -\omega^2\}}}, \quad q \equiv 11 \mod 12$  & $0$& $\mF_2'$ &$\bZ/2\bZ$\\
&&&\\  
$\upsilon'_{t_{\{\lambda, \lambda^{-1}\}}}, \quad \{\lambda, \lambda^{-1}\} \in \mN_2/H_2$  & $ \jmath(\lambda)\in J_2$& $\mF_2'$ &$\bZ/2\bZ$\\
$q \equiv 3 \mod 4$ these are $|J_2|$ orbits &&&\\
\hline  
\end{tabular}
\[\]
\caption{Representatives and stabilizers of $G$-orbits of quartic forms with nonzero discriminant: }
 \label{table:3}

\end{table}

 We begin with the orbits in $\mF_4 \cup \mF_2 \cup \mF_1$. As shown in Lemma \ref{Ji_lem}, each such orbit $G \cdot f$ is represented by 
\[ X R_f(X,Y)=X(-4 Y^3 +3 I(f) YX^2 -J(f) X^3).\]
 
The  $(q-2)=|J_4|+|J_2| +|J_1|$ orbits  in $\mF_4 \cup \mF_2 \cup \mF_1$  with $\jmath(\mO) \notin \{0, 1728\}$ are represented by $G \cdot \mathcal E_r$, where 
\[  \mathcal E_r(X,Y)= X(\tfrac{256}{27}(1-\tfrac{1728}{r})Y^3-4 Y X^2 + X^3), \qquad r \in \bF_q\setminus\{0,1728\}.  \]
If $\jmath(f)=1728$, i.e., $J(f)=0$ then $XR_f(X,Y)=XY(-4 Y^2+ 3 I(f) X^2)$. We get two orbits corresponding to the cases $3I(f)/4$ being a quadratic residue or non-residue in $\bF_q$, and  these orbits are in $\mF_4$ and $\mF_2$, respectively. 

If $\jmath(f)=0$, i.e. $I(f)=0$ then $XR_f(X,Y)=-X(4 Y^3 +  J(f) X^3)$. If $q \equiv 2 \mod 3$ we get a single orbit $G \cdot X(Y^3-X^3)$ in $\mF_2$. If $q \equiv 1 \mod 3$ we get three  orbits $G \cdot X(Y^3- X^3)$ in $\mF_4$, and $G \cdot X(Y^3- \gamma^i X^3), i=1,2$ in $\mF_1$.\\

Turning to the orbits in $\mF_4'$,  we recall from Proposition \ref{Lambda_prop},  that there are $(q-1)/2$ orbits in $\mF_4'$ which correspond to the $(q-1)/2$ orbits of the group $H_2=\{t \mapsto t^{\pm 1} \}$ on $\tilde \mN_4=\bF_q\setminus\{0,1\}$. These orbits are  given by $\{-1\}$ and the $(q-3)/2$ classes $\{\lambda, \lambda^{-1}\}$ for $\lambda \in \bF_q\setminus\{0, \pm 1\}$. Let $q \equiv \mu \mod 3$ with $\mu \in \{\pm 1\}$. If $\mu=1$, one of these classes is $\{-\omega, -\omega^2\}$, and there are $(q-4-\mu)/2$ classes in $\mN_4/H_2$ where $\mN_4 = \tilde \mN_4 \setminus \{-\omega, -\omega^2, -1,2, \tfrac{1}{2}\}$. The map 
\[\chi_1:\bF_{q^2}\setminus\{\bF_q \cup \{\pm \sqrt \ep\}\} \to \tilde \mN_4 , \qquad 
\chi_1(\alpha)=  N_{q^2/q}(\tfrac{\alpha - \sqrt\ep}{\alpha+\sqrt\ep}) \]
is surjective. We pick some
$\alpha_{-1}$  and $\alpha_{\{\lambda, \lambda^{-1}\}}$ in $\bF_{q^2}$ 
 satisfying $\chi_1(\alpha_{-1})=-1$ and   $\chi_1(\alpha_{\{\lambda, \lambda^{-1}\}}) \in \{\lambda, \lambda^{-1}\}$.  We will consider the forms  $\psi'_{\alpha}$ for these $(q-1)/2$ values of $\alpha$, where  $\psi'_{\alpha}$ is the form given in \eqref{eq:psialpha}:
\[ \psi'_\alpha = (Y^2- \ep X^2) ( Y - \alpha X)(Y - \phi(\alpha)X) , \quad  \alpha \in \bF_{q^2}\setminus\{\bF_q \cup  \{\pm \sqrt\ep\}\}. \]

Next, we turn to the orbits in $\mF_2'$. We recall from Proposition \ref{Lambda_prop}, that  there are $(q-1)/2$ orbits in $\mF_2'$ which correspond to the $(q-1)/2$ orbits of the group $H_2=\{t \mapsto t^{\pm1} \} \subset G_*$  acting on $\tilde \mN_2=\{ \lambda \in \bF_{q^2}\setminus\{1\}: N_{q^2/q}(\lambda)=1\}$. These orbits are  given by $\{-1\}$ and the $(q-3)/2$ classes $\{\lambda, \lambda^{-1}\}$ for $\lambda \in \tilde \mN_2\setminus\{ -1\}$. 

If $q \equiv 1 \mod 4$, we recall the discussion around \eqref{eq:upsilonr1mod4}: $\bF_{q^4}=\bF_q[\theta]$ where $\theta^4=\gamma$, and  $\gamma$ denotes a generator of the cyclic group $\bF_q^\times$. The map 
\[\chi_2: \bF_q   \to \tilde \mN_2 , \qquad \chi_2(t)=\tfrac{t -  \imath \theta^2/2}{t + \imath \theta^2/2} \]
is bijective, and moreover maps the classes  $\{\pm t\}$ bijectively to the classes $\{\lambda, \lambda^{-1}\}$.  The class $t=0$ maps to the class $\lambda = -1$.   We denote the class $t=0$ by $t_{-1}$. We pick some  $t_{\{\lambda, \lambda^{-1}\}} \in \bF_q$ satisfying the property that 
$\chi_2(t_{\{\lambda, \lambda^{-1}\}}) \in \{\lambda, \lambda^{-1}\}$. For example,  
\[ t_{\{\lambda, \lambda^{-1}\}} =\tfrac{\imath \theta^2 (1+\lambda)}{2(1-\lambda)}. \]
 We will consider the forms  
$\upsilon'_t$   for these $(q-1)/2$ values of $t$, where  $\upsilon'_t$ 
is the form given in \eqref{eq:upsilonr1mod4}:
\[ \upsilon'_t= \begin{cases}
 (Y^2- t  X^2)^2 - \gamma X^4 -  4 \sqrt{\gamma t}  X^3Y &\text{if $\ep t$ is a square in $\bF_q$,} \\
  (Y^2-\gamma t X^2)^2 - \gamma^3 X^4 -  4 \gamma^2 \sqrt{t}   X^3Y &\text{if $t \in (\bF_q^\times)^2$.}
\end{cases} \]

 In case  $q \equiv 3 \mod 4$,  we recall the discussion around \eqref{eq:upsilonr3mod4}: 
 $\bF_{q^2}=\bF_q[\imath]$ where $\imath = \sqrt{-1}$, and $\bF_{q^4} = \bF_{q^2}[\theta]$ where $\theta^2 = \theta_0  + \imath \theta_1$ with $\theta_0^2+\theta_1^2=-1$ and 
 $\theta_0$ is a non-square in $\bF_q$.   The map 
\[ \chi_3: \bF_q   \to \tilde \mN_2 , \qquad \chi_3(t)=\tfrac{2r +  \theta_0  +  \imath}{ 2r +  \theta_0 -  \imath }\]
is bijective, and moreover maps the classes  $\{t, -\theta_0-t\}$ bijectively to the classes $\{\lambda, \lambda^{-1}\}$.  The class $t=-\theta_0/2$ maps to the class $\lambda = -1$. 
We again pick  $t_{\{\lambda, \lambda^{-1}\}} \in \bF_q$ which satisfies the property that 
$\chi_3(t_{\{\lambda, \lambda^{-1}\}}) \in \{\lambda, \lambda^{-1}\}$, for example,
\[ t_{\{\lambda, \lambda^{-1}\}} =-\tfrac{\theta_0}{2}+\tfrac{\imath  (\lambda+1)}{2(\lambda-1)}. \]  
We denote the class $t=-\theta_0/2$ by $t_{-1}$.  We will consider the forms  
$\upsilon'_t$   for these $(q-1)/2$ values of $t$, where  $\upsilon'_t$ 
is the form given in \eqref{eq:upsilonr3mod4}:
\[\upsilon'_t= \begin{cases}
 (Y^2- t  X^2)^2 -  X^4  + 2 X^2(  (Y^2+ t  X^2)   \theta_0  + 2XY \sqrt{-t}  \theta_1)   &\!\!\text{if $t \in \{-x^2: x \in \bF_q\}$}, \\
  (Y^2+ t X^2)^2 -X^4 -  2 X^2(  (Y^2- t  X^2)  \theta_0 - 2 XY \sqrt{t}  \theta_1)  &\!\!\!\!\text{if $t \in (\bF_q^\times)^2$.}
\end{cases} \]
\begin{rem} \label{NT_remark}
     The problem of classifying $GL_2$-orbits of binary quartic forms has been studied in number theory literature. In the  work \cite{BhargavaShankar}, M. Bhargava and A. Shankar obtained asymptotic results on the number of $GL_2(\bZ)$-equivalence classes of irreducible binary quartic forms $f(X,Y)$ over $\bZ$, having bounded height $H(f)=\text{max}\{I(f)^3, J(f)^2\}$.  Using a connection between $2$-Selmer groups of elliptic curves and binary quartic forms, the authors proved a very important result: when ordered by height, the average rank of elliptic curves over rationals is bounded. \\

    In the case when $q$ is a prime number, D. Kamenetsky in his master's thesis \cite[Chapter 3]{Kamenetsky}  also provides a description of the stabilizer groups of binary quartic forms $f$ with nonzero discriminant, which agrees with Proposition  \ref{stabilizer_prop} above, and determines the number of orbits and sizes of the orbits (the group acting on nonzero  binary quartic forms is $GL_1(\bF_p) \times GL_2(\bF_p)$ which essentially corresponds to $PGL_2(\bF_p)$ acting on the projective space of quartic forms). The author also explores the application of  these orbits to  Fourier transforms of certain functions on the space of binary quartic forms.\\

     In the work \cite{ishitsuka} by Y. Ishitsuka, T. Taniguchi, F. Thorne, S. Y. Xiao, the authors determined the Fourier transform of the characteristic function of singular (discriminant zero) binary quartic forms over $\mathbb{F}_p$. Using this result, the authors  obtain further results on $2$-Selmer groups of Elliptic curves, and on the asymptotic distribution of the  number of $GL_2(\bZ)$-equivalence classes of irreducible binary quartic forms $f(X,Y)$ over $\bZ$, having bounded height and whose discriminant is square-free and has at most $4$ prime factors.

\end{rem}

\section{Classification of lines of $PG(3,q)$} \label{orbitsP3}
Let $\mF^+$ denote the subset of forms $f$ in $\bP( \text{Sym}^4 (V^*))$ for which $I(f)$ is a square in $\bF_q$. We recall from Lemma \ref{lem_pi} that $\pi:\mQ \to \mF^+$ is a $G$-equivariant  $2$-sheeted covering branched over the forms with $I(f)=0$. For $f \in \mF^+$ the set $\pi^{-1}(f)$ is of the form  $\{L , L^{\perp}\}$ where $L^{\perp}$ is the polar dual of $L$.   If $\mO = G \cdot f$ with $I(f)=0$, then $\pi^{-1}(\mO)$ forms a self-dual orbit $\fO$ of size $|\mO|$. If  $\mO = G \cdot f$ with $I(f) \neq 0$, then $\pi^{-1}(\mO)$ either forms a single self-dual orbit $\fO$ of size $2 |\mO|$, or a pair of orbits $\fO, \fO^{\perp}$ each of size $|\mO|$. 
 Let $\mQ^0$  denote the subset of $\mQ$ consisting of the non-generic lines of $PG(3,q)$.  We begin with the classification  of $G$-orbits of $\mQ^0$. It is shown in \cite {BH} and \cite[p.236]{Hirschfeld3}, that $\mQ^0$ can be decomposed into $8$ classes (in case char$(\bF_q) \neq 2, 3$)  $\fO_1, \dots,\fO_5, \fO_1^\perp, \fO_3^\perp, \fO_5^\perp$. The decomposition of these classes into $G$-orbits is well-known in literature (\cite[Table 1]{GL}, \cite[Theorem 3.1]{DMP1}, \cite[Theorem 8.1]{BPS}): the six classes other that $\fO_5, \fO_5^\perp$ consist of a single orbit, whereas $\fO_5$  and $\fO_5^\perp$ each consist of $2$ orbits $\fO_5 = \fO_{51} \cup \fO_{52}$ and $\fO_5^\perp = \fO_{51}^\perp \cup \fO_{52}^\perp$.
 
 We recall that a line $L$ is non-generic if it is either contained in some osculating plane of the twisted cubic $C$ or intersects $C$. If $P=\nu_3(s,t)$ is a point of $C$, then the set of lines through $P$  form a  `Latin plane' $\Pi$ in the Klein quadric  $\mQ$ (\cite[Table 15.10]{Hirschfeld3}). The  polar dual of these lines is the set of  lines contained in the osculating plane at $P$ represented in $\mQ$ by  a `Greek plane'  $\Pi^{\perp}$. The intersection of $\Pi$ and $\Pi^{\perp}$ represents the lines through $P$ and contained in the osculating plane at $P$. Under the projection $\pi: \mQ \to \bP( \text{Sym}^4(V^*))$ we have $\pi(\Pi)=\pi(\Pi^\perp)$  is the set of forms $f(X,Y)$ having $(Xt-Ys)$ as a repeated root. For example, if $P = \nu_3(0,1)$, then the Latin plane $\Pi$ is given by $z_0=z_1=z_2-z_5=0$ whereas  the Greek plane $\Pi^{\perp}$ is given by $z_0=z_1=z_2+z_5=0$. The plane $\pi(\Pi)=\pi(\Pi^{\perp})$ consists of forms having $X^2$ as a factor. The classification of forms $\mF^0$ having discriminant  $\Delta(f)=0$ is given in Lemma \ref{lemDelta=0}. Since $\Delta(f)=I^3(f)-J^2(f)=0$ we see that $I(f)$ is always a square in $\bF_q$. It is equal to zero for the first two orbits  below, and non-zero for the remaining $4$ orbit pairs. Here, an axis is the line of intersection of two osculating planes of $C$,  a unisecant of $C$ is a non-tangent line of $C$ which meets $C$ in exactly one point, and an external line of $C$ is a line which is not an axis of $C$, and which does not meet $C(\overline{\bF_q})$.
  (see \cite{BH}, \cite[Ch.21]{Hirschfeld3} for more details).
\begin{lem} The set of special lines decompose into the following $10$ orbits:
\begin{enumerate}
\item The orbit $\fO_2 = \pi^{-1}(G \cdot X^4)$ consisting of the  $(q+1)$ tangent lines of $C$. 
\item The orbit $\fO_4 = \pi^{-1}(G \cdot X^3Y)$ consisting of the
$q(q + 1)$  non-tangent unisecants in osculating planes.
\item Orbits $\fO_1$ and $\fO_1^{\perp}$ from $\pi^{-1}(G \cdot  X^2Y^2)$ of size $(q^2+q)/2$ each  and consisting of the secant lines, and the real axes of $C$, respectively.
 
\item Orbits  $\fO_3$ and $\fO_3^{\perp}$ from $\pi^{-1}(G \cdot (X^2-\epsilon Y^2)^2$  of size $(q^2-q)/2$ each  and consisting of the imaginary secant lines, and the imaginary axes of $C$, respectively.

\item Orbits  $\fO_{51}$ and $\fO_{51}^{\perp}$ from $\pi^{-1}(G \cdot X^2Y(Y-X))$  of size $(q^3-q)/2$ each. 

\item Orbits  $\fO_{52}$ and $\fO_{52}^{\perp}$ from $\pi^{-1}(G \cdot X^2(X^2-\epsilon Y^2))$ of size $(q^3-q)/2$ each.  
\end{enumerate}
The set of external lines in osculating planes consists of two orbits $\fO_5^{\perp}= \fO_{51}^{\perp} \cup \fO_{52}^{\perp}$. The set of unisecants not  in osculating planes consists of two orbits 
$\fO_5 = \fO_{51} \cup \fO_{52}$.

\end{lem}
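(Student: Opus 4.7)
The plan is to lift the classification of $\mF^0$ given in Lemma \ref{lemDelta=0} through the two-sheeted $G$-equivariant covering $\pi\colon \mQ \to \mF^+$ from Lemma \ref{lem_pi}. First I would verify that a line $L$ is non-generic if and only if $f_L := \pi(L)$ has a repeated linear factor over $\overline{\bF_q}$, i.e.\ $f_L \in \mF^0$: by Lemma \ref{lem_pi}, the roots of $f_L$ record the points $\nu_3(s,t)$ whose tangent line meets $L$, so a repeated root records an incidence with $C$ or with an osculating plane. One also checks that each of the six orbits of Lemma \ref{lemDelta=0} lies in $\mF^+$ (trivially when $I(f)=0$, and by inspection of the representatives otherwise), so $\pi$ restricts to a surjection $\mQ^0 \twoheadrightarrow \mF^0$.

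For the two orbits with $I(f)=J(f)=0$, namely $G\cdot X^4$ and $G\cdot X^3Y$, the fibre of $\pi$ consists of a single point of $\mQ$ (the sixth coordinate $\pm\sqrt{I(f)}$ collapses), so the preimage of each is a single self-dual $G$-orbit of the same size as the base. Using Lemma \ref{lem_pi}, $f_L=X^4$ forces $L$ to meet the tangent at $\nu_3(1,0)$ with multiplicity $4$ and hence to be that tangent line itself, yielding $\fO_2$ of size $q+1$; $f_L=X^3Y$ forces $L$ to lie in the osculating plane at $\nu_3(1,0)$ and pass through $\nu_3(1,0)$, identifying it as a non-tangent unisecant in an osculating plane and giving $\fO_4$ of size $q(q+1)$.

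For the remaining four orbits, where $I(f) \neq 0$, each fibre $\pi^{-1}(f)=\{L, L^\perp\}$ has two points and one must decide whether they lie in the same $G$-orbit or in two polar-dual orbits. I would argue the latter in each of the four cases by exhibiting a $G$-invariant that separates the two lifts. For $f = X^2Y^2$, the double roots at $(1,0)$ and $(0,1)$ force $L$ to be either the secant through $\nu_3(1,0), \nu_3(0,1)$ or the axis cut out by the corresponding two osculating planes; the invariant $|L\cap C(\bF_q)| \in \{2,0\}$ separates these, producing $\fO_1, \fO_1^\perp$. For $f = (X^2-\ep Y^2)^2$ the analogous Galois-conjugate pair of points yields $\fO_3, \fO_3^\perp$, distinguished by $|L\cap C(\bF_{q^2})| \in \{2,0\}$. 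For $f = X^2Y(Y-X)$ (respectively $X^2(X^2-\ep Y^2)$), the double root at $(0,1)$ bifurcates into ``line through $\nu_3(0,1)$'' versus ``line in the osculating plane at $\nu_3(0,1)$'', while the simple roots impose transverse incidence with two other tangents, producing $\fO_{51}, \fO_{51}^\perp$ (respectively $\fO_{52}, \fO_{52}^\perp$); these are separated by $|L\cap C(\bF_q)| \in \{1,0\}$.

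The main obstacle is justifying that in each of the four non-degenerate cases $L$ and $L^\perp$ actually lie in \emph{distinct} $G$-orbits rather than fusing into a single self-dual orbit of doubled size; the invariant $|L\cap C(\overline{\bF_q})|$, refined by the splitting field of the intersection points and augmented by ``is contained in an osculating plane'', handles all four cases uniformly. A final sanity check comes from summing the six orbit sizes of Lemma \ref{lemDelta=0} with multiplicities $(1,1,2,2,2,2)$ and confirming the total equals $|\mQ^0|$.
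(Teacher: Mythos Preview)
Your approach is essentially the paper's own: the discussion preceding the lemma establishes that the Latin and Greek planes $\Pi,\Pi^\perp$ at $P=\nu_3(s,t)$ both project under $\pi$ to the plane of quartics divisible by $(Xt-Ys)^2$, and the lemma is then stated without further proof, the orbit decomposition of non-generic lines being cited as known from \cite{BPS}, \cite{DMP1}, \cite{GL}. Your lift-through-$\pi$ argument, together with the separating invariant $|L\cap C|$ (or ``lies in an osculating plane''), makes the implicit argument explicit and is correct in outline.

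One small slip: with the paper's conventions $f(s,t)=0$ means $(Xt-Ys)\mid f$, so $X$ vanishes at $(s,t)=(0,1)$, not $(1,0)$. Thus $X^4$ has its quadruple root at $(0,1)$ and $X^3Y$ its triple root at $(0,1)$; your identifications of $\fO_2$ and $\fO_4$ should reference $\nu_3(0,1)$ rather than $\nu_3(1,0)$. This does not affect the argument.
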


We now begin the proof of Theorem \ref{main}. For an orbit $\fO$ on $\mQ\setminus\mQ^0$ let $\jmath(\fO)=\jmath(\mO)$ where $\mO=\pi(\fO)$, and where $\jmath(\mO)=\jmath(f)$ for any quartic form $f$ representing $\mO$. 
\begin{prop} \label{J+prop} \hfill  \begin{enumerate}
\item All the $(3+\mu)$ orbits having $\jmath(\mO)=0$ are in $\mF^{+}$.
\item  Let  $\jmath(G \cdot f) = 1728$. If  $f \in \mF_4 \cup \mF_4'$ then $G \cdot f \in \mF^+$ if $q \equiv \pm 1 \mod 12$.  If $f \in \mF_2 \cup \mF_2'$ then $G \cdot f \in \mF^+$ if $q \equiv \pm 5 \mod 12$.
\item  if $\jmath(G \cdot f) \neq 0, 1728$  and $\jmath(f) \in J_i$ for $i=1,2,4$ then 
$G \cdot f \in \mF^+$  if and only if $\jmath(f) \in J_i^+$ where 
 \[ J_i^+ = \{r \in J_i : r/(r-1728) \text{ is a square in $\bF_q$}\}. \]
 The sets $J_i^+$ have sizes 
 \begin{enumerate}
\item $|J_1^+|=|J_1|/2 =(q-\mu)/6$ where $q \equiv \mu \mod 3$  and $\mu \in \{ \pm 1\}$.
\item $|J_4^+|= (q-\ell)/12$ where  $q \equiv \ell \mod 12$ and $\ell \in \{5,7,11,13\}$.
\item $|J_2^+|=\begin{cases} (q-1)/4 &\text{ if $q \equiv 1 \mod 12$,}\\
   (q-5)/4 &\text{ if $q \equiv 5 \mod 12$,}\\
 (q-3)/4  &\text{ if $q \equiv -1, -5 \mod 12$.}\end{cases}$
\end{enumerate}
\end{enumerate}
\end{prop}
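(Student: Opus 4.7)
The plan is to dispatch parts (1) and (3) via a single algebraic identity, handle part (2) by computing the apolar invariant on each orbit representative, and obtain the three counts (a)--(c) via character-sum estimates.

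Parts (1) and (3) follow from $I(f)^3 = J(f)^2\, r/(r-1728)$ (with $r = \jmath(f)$), a rearrangement of the definition of $\jmath$.  At $r = 0$ this forces $I(f) = 0$, a square, proving (1).  For $r \notin \{0, 1728\}$ both $I$ and $J$ are nonzero; since $3$ is odd, cubing is a bijection on the quotient group $\bF_q^\times/(\bF_q^\times)^2$, hence $I \in (\bF_q^\times)^2 \Leftrightarrow I^3 \in (\bF_q^\times)^2 \Leftrightarrow r(r-1728) \in (\bF_q^\times)^2 \Leftrightarrow 1 - 1728/r \in (\bF_q^\times)^2$, which is the defining condition for $r \in J_i^+$.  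This proves (3).

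For part (2), $\jmath(f) = 1728$ means $J(f) = 0$ and $I(f) \neq 0$.  Each representative in Table \ref{table:3} factors as $f = f_1 f_2$ into two quadratic factors whose cross-ratio lies in $\{-1, 2, 1/2\}$, so Lemma \ref{I/res} applies and $I(f) = (\lambda + \lambda^{-1} - 1)\,\mathrm{Res}(f_1, f_2)/36$, equal to $-\mathrm{Res}/12$ at $\lambda = -1$ and $\mathrm{Res}/24$ at $\lambda \in \{2, 1/2\}$.  Direct computation gives $I(XY(Y-X)(Y+X)) = 1/12$ (square class $\chi(3)$) and $I(XY(Y^2 - \ep X^2)) = \ep/12$ (class $-\chi(3)$).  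For $\psi'_{\alpha_{-1}} \in \mF_4'$, the relation $\alpha \phi(\alpha) = \ep$ gives $\mathrm{Res}(Y^2 - \ep X^2, (Y-\alpha X)(Y-\phi(\alpha)X)) = -\ep (\alpha^2 - \ep)^2/\alpha^2$; the element $\beta = (\alpha^2 - \ep)/\alpha$ satisfies $\phi(\beta) = -\beta$ and so lies in $\sqrt\ep\, \bF_q$, from which $I$ descends to $\bF_q$ with square class $\chi(3)$.  The remaining $\mF_4'$ and $\mF_2'$ representatives are handled analogously.  Since $\chi(3) = +1$ iff $q \equiv \pm 1 \bmod 12$, part (2) follows.

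For the counts, first observe that the discriminant of the cubic $Z^3 - 4Z^2 + \tfrac{256}{27}(1 - 1728/r)$ equals $2^{22}(r-1728)/r^2$, so $\chi(r - 1728) = +1 \Leftrightarrow r \in J_1 \cup J_4$ and $\chi(r-1728) = -1 \Leftrightarrow r \in J_2$ (a separable cubic has square discriminant exactly when it either splits completely or is irreducible).  Subpart (c), $|J_2^+| = \#\{r: \chi(r) = \chi(r-1728) = -1\}$, is a routine character sum evaluated via $\sum_{r \in \bF_q}\chi(r) = 0$ and $\sum_{r \in \bF_q}\chi(r(r-1728)) = -1$ (the standard sum for a conic with nonzero discriminant $1728^2$), with small boundary adjustments at $r \in \{0, 1728\}$.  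For subpart (b), the explicit formula $\jmath(\lambda) = 256(\lambda^2-\lambda+1)^3/(\lambda^2(\lambda-1)^2)$ for $\lambda \in \bF_q$ yields $\chi(\jmath(\lambda)) = \chi(\lambda^2 - \lambda + 1)$; since $H_4 = G_*$ acts in orbits of size $6$ on $\mN_4$, one has $|J_4^+| = \tfrac{1}{6}\#\{\lambda \in \mN_4: \chi(\lambda^2-\lambda+1) = +1\}$, computed from $\sum_{\lambda \in \bF_q}\chi(\lambda^2-\lambda+1) = -1$ after subtracting contributions from the excluded points $\{0, 1, -1, 2, 1/2\}$ (which contribute $1, 1, \chi(3), \chi(3), \chi(3)$) and $\{-\omega, -\omega^2\}$ (which contribute $0$).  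For subpart (a), the identity $|J_1^+| + |J_4^+| = N$ together with the character-sum evaluation of $N = \#\{r: \chi(r) = \chi(r-1728) = +1\}$ yields $|J_1^+| = (q - \mu)/6 = |J_1|/2$ in every residue class of $q \bmod 12$.

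The main technical obstacle is part (2) for the extension-field representatives $\psi'_\alpha$ and $\upsilon'_t$: the resultant lives a priori in a quadratic extension of $\bF_q$, and one must leverage Frobenius-invariance of the factorization $f = f_1\, \phi(f_1)$ to show that $I(f)$ descends to $\bF_q$ with a controllable square class.
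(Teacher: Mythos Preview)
Your proposal is correct, and for parts (1) and (3) (the equivalence $I(f)\in(\bF_q^\times)^2 \Leftrightarrow r/(r-1728)\in(\bF_q^\times)^2$) it matches the paper exactly.  There are two genuine differences worth noting.

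\textbf{Part (2).}  The paper does not compute on the explicit representatives of Table~\ref{table:3}.  Instead it takes an arbitrary restricted ordering $(r_1,r_2,r_3,r_4)$ of the roots of $f$, sets $f_1=(Ys_1-Xt_1)(Ys_2-Xt_2)$ and $f_2=(Ys_3-Xt_3)(Ys_4-Xt_4)$, and writes $36I(f)=\beta^2(\lambda^2-\lambda+1)$ with $\beta=(t_4s_1-t_1s_4)(t_3s_2-t_2s_3)$.  For $f\in\mF_4\cup\mF_4'$ one checks directly that $\beta\in\bF_q$ (in the $\mF_4'$ case $\beta$ is a norm from $\bF_{q^2}$), while for $f\in\mF_2\cup\mF_2'$ the restricted ordering forces $\phi(\beta)=\alpha=\lambda\beta=-\beta$, whence $\beta\in\sqrt\ep\,\bF_q$ and $\beta^2$ is a non-square.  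This handles all five orbits uniformly without appealing to any particular choice of $\alpha_{-1}$ or $t_{-1}$.  Your representative-by-representative computation reaches the same conclusion but is more laborious, and the phrase ``handled analogously'' hides exactly the Frobenius-antisymmetry argument that the paper makes explicit.

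\textbf{The count $|J_2^+|$.}  Here your route is the more elegant one.  The paper parametrizes $\mN_2$ by $\tau=\lambda+\phi(\lambda)\in\bF_q$, rewrites $\jmath(\lambda)/(\jmath(\lambda)-1728)=4(\tau-1)^3/((\tau+2)(2\tau-5)^2)$, substitutes $(\tau-1)/(\tau+2)=t^2$, and counts rational points on an auxiliary conic in $(t,s)$---a somewhat involved change of variables.  Your observation that the discriminant of the defining cubic $Z^3-4Z^2+\tfrac{256}{27}(1-1728/r)$ equals $2^{22}(r-1728)/r^2$ gives the clean characterization $J_2=\{r:\chi(r-1728)=-1\}$, after which $|J_2^+|=\#\{r:\chi(r)=\chi(r-1728)=-1\}$ is a two-line character-sum computation.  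The paper never records this discriminant identity; it would streamline the argument.  For $|J_4^+|$ the two approaches coincide (both reduce to counting $\lambda\in\bF_q$ with $\lambda^2-\lambda+1$ a square), and both obtain $|J_1^+|$ by complement, though the paper uses $|J_1^+|+|J_2^+|+|J_4^+|=(q-3)/2$ rather than your $|J_1^+|+|J_4^+|=N$.
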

\bep
Clearly $\jmath(f)=0$ if and only if $I(f)=0$ and hence all orbits with $\jmath(\mO)=0$ are in $\mF^{+}$. Now, suppose $\jmath(f)=1728$. We write  $f = f_1 f_2$ where $f_1 = \prod_{i=1}^2(Ys_i-Xt_i)$ and    $f_2 = \prod_{i=3}^4(Ys_i-Xt_i)$ for a restricted ordering $((s_1,t_1), \dots, (s_4,t_4)) \in \text{ord}(f)$. 
We note from \eqref{eq:resultant} and \eqref{eq:crossratiodef} that 
\begin{multline*} \text{Res}(f_1,f_2) = \alpha \beta, \quad \text{ and } \lambda = \tfrac{\alpha}{\beta}, \quad \text{where }\\  \alpha = (t_4 s_2-t_2s_4) (t_3 s_1-t_1s_3), \quad \beta = (t_4 s_1-t_1s_4) (t_3 s_2-t_2s_3). \end{multline*}
Therefore, by Lemma \ref{I/res}, we have  
\[ 36 I(f) = \beta^2 (\lambda^2-\lambda + 1).\]
If $f \in \mF_4$, then clearly $\beta \in \bF_q$. If  $f \in \mF_4'$, we have $(s_2,t_2) = (\phi(s_1), \phi(t_1))$, $(s_4,t_4) = (\phi(s_3), \phi(t_3))$, and hence $\beta = N_{q^2/q}(\phi(t_3)s_1-\phi(s_3)t_1) \in \bF_q$. Thus for $f \in \mF_4 \cup \mF_4'$, the quantity  $I(f)$ is a square in $\bF_q$ if and only if $(\lambda^2-\lambda + 1)$ is a square for $\lambda \in \{-1,1/2,2\}$,  which is equivalent to  the existence of $\sqrt{3}$ in $\bF_q$, i.e. $q \equiv  \pm 1 \mod 12$.  If $f \in \mF_2$, then $(s_4,t_4) = (\phi(s_3), \phi(t_3))$ whereas  $s_1,t_1, s_2,t_2$ are in $\bF_q$, therefore  $\alpha = \phi(\beta)$. Similarly, if $f \in \mF_2'$ then
\[ (s_4,t_4) = (\phi^3(s_1), \phi^3(t_1)), \;  (s_3,t_3) = (\phi(s_1), \phi(t_1)), \; (s_2,t_2) = (\phi^2(s_1), \phi^2(t_1)), \] and therefore 
$\alpha = \phi(\beta)$ again.  Since $\mN_2^{1728}= \{-1\}$, we see that for $f \in \mF_2 \cup \mF_2'$ we have $\phi(\beta)/\beta = \lambda = -1$, or in other words $ \phi(\beta)=-\beta$. Thus, $\beta= y \sqrt\ep$ for some $y \in \bF_q$,  and hence $\beta^2 = \ep y^2$ is a non-square in $\bF_q$.  Thus, for $f \in \mF_2 \cup \mF_2'$, the quantity  $I(f)$ is a square in $\bF_q$ if and only if $3$ is a non-square in $\bF_q$   which is equivalent to $q \equiv  \pm 5 \mod 12$. \\
Now suppose $\mO = G \cdot f$ with $\jmath(f) \neq 0, 1728$. It follows from the identity  \eqref{eq:jfdef}  
   $\jmath(f)/(\jmath(f)-1728)=I(f)^3/J(f)^2$,  that $I(f)$ is a square in $\bF_q$ if and only if $\jmath(f)/(\jmath(f)-1728)$ is a square in $\bF_q$.  
   For $\lambda \in \mN_4$ we recall from \eqref{eq:jdef} that
\[ \tfrac{\jmath(\lambda)}{\jmath(\lambda)-1728} =  \frac{(\lambda^2-\lambda+1)^3}{\left((\lambda+1)(\lambda-2)(\lambda-\tfrac{1}{2}) \right)^2} \] 
is a square in $\bF_q$ if and only if $(\lambda^2-\lambda+1)$ is a square. The number of solutions $(\lambda,s)$ of  the equation $\lambda^2-\lambda+1=s^2$ is $(q-1)$, of which there are $4$ solutions $\{0,1\} \times \{\pm1\}$ with $\lambda \in \{0,1\}$. There are $2$ solutions $\{-\omega^{\pm1}\} \times \{0\}$ if $q \equiv 1,7 \mod 12$ and there are $6$ solutions $\{-1,1/2,2\} \times \{ \pm \sqrt 3\}$ if $q \equiv 1,11 \mod 12$.  Thus, the number of   $\lambda \in \mN_4$ with $ \jmath(\lambda)/(\jmath(\lambda)-1728)$ being a square in $\bF_q$, is  $(q-\ell)/2$ where $q \equiv \ell \mod 12$ and  $\ell \in \{5,7,11,13\}$.  These make up  $(q-\ell)/12$ orbits of $H_4=G_*$ in $\mN_4$. Thus $|J_4^+|=(q-\ell)/12$.\\
For $\lambda \in \mN_2$,  let $\tau = \lambda+\phi(\lambda)$. Since $\lambda \notin \bF_q$ and $\lambda \phi(\lambda)=1$, the quantity $\tau^2-4 = (\phi(\lambda)-\lambda)^2$ is a non-square in $\bF_q$. We can write \eqref{eq:jdef} as
\[ \tfrac{\jmath(\lambda)}{\jmath(\lambda)-1728} =  \frac{4(\tau-1)^3}{(\tau+2)(2\tau-5)^2}. \]
Therefore,  this quantity is a square in $\bF_q$ if and only if $(\tau-1)/(\tau+2)$ and $\ep(\tau^2-4)$  are  squares in $\bF_q^{\times}$.
Setting $(\tau-1)/(\tau+2)=t^2$, i.e. $\tfrac{\tau}{2}=\tfrac{t^2+1/2}{1-t^2}$, we see that $4|J_2^+|$ is 
the number of rational solutions $(t,s)$ with $t\notin\{ \pm1/2, 0\}$  of the equation   $s^2= \tfrac{3\ep (4t^2-1)}{(t^2-1)^2}$. We may replace this equation with  $3 \ep   (4t^2-1) = ((t^2-1)s)^2$ as the latter equation has no rational solutions for which $t^2-1=0$.
 The number of solutions $(t,s)$ of this equation is $(q+1)$ or $(q-1)$ according as  $3$ is a square or non-square in $\bF_q$, i.e. $q \equiv \pm 1 \mod 12$ or $q \equiv \pm 5 \mod 12$. There are $2$ solutions $(t,s) = (\pm 1/2, 0)$ with $t = \pm 1/2$,   and if $q \equiv 5, 11 \mod 12$ there are $2$ solutions $(t,s) = (0, \pm \sqrt{-3 \ep})$.  Thus  $4|J_2^+|$ equals $(q-1), (q-5), (q-3), (q-3)$ according as $q \equiv 1,5,7,11 \mod 12$.\\
 Finally, the number of $r \in \bF_q \setminus \{0,1728\} = J_4 \cup J_2 \cup J_1$ such that  
 $r/(r-1728)$ is a square in $\bF_q$ is $(q-3)/2$. Therefore, $|J_1^+| = (q-3)/2 - |J_2^+| - |J_4^+|$
equals  $(q-\mu)/6$ where $q \equiv \mu \mod 3$  and $\mu \in \{ \pm 1\}$.
\eep

\begin{lem} \label{lem_sgn}
Let $\{L , L^{\perp}\}$ in $\mQ \setminus \mQ^0$ with $L \neq L^{\perp}$ and let $f = \pi(L)$. Let $(r_1, \dots,r_4) \in \mathrm{ord}(f)$ and  let $F$ be a splitting field of $f$. Let $\sigma: \mathrm{Stab}_F(f) \to S_4$ be the homomorphism $g \mapsto \sigma_g$  of Lemma  \ref{stabilizer_lemma}   given by the permutation action of $\mathrm{Stab}_F(f)$ on $(r_1, \dots, r_4)$. Let 
\[ \widetilde{\mathrm{sgn}}: \mathrm{Stab}_F(f) \to \bZ/2\bZ,\]
  be the homomorphism defined by $\widetilde{\mathrm{sgn}}(g)=-1$ if $g$ interchanges $L$ and $L^{\perp}$  and $\widetilde{\mathrm{sgn}}(g)=1$ if $g$ fixes $L$ and $L^{\perp}$. We have 
  \[ \widetilde{\mathrm{sgn}}(g) = \mathrm{sgn}(\sigma_g), \quad \text{where } \mathrm{sgn}:S_4 \to \bZ/2\bZ \text{ is the sign homomorphism}.\]
\end{lem}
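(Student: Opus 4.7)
The plan is to compute $\widetilde{\mathrm{sgn}}(g)$ explicitly by tracking the action of $g$ on the anti-self-dual coordinate $z_5$. By \eqref{eq:wedge2g4}, the matrix $\wedge^2 g^{[3]}$ scales $(z_0,\ldots,z_4)$ by $\det(g)\,g^{[4]}$ and $z_5$ by $\det(g)^3$. For $g\in\mathrm{Stab}_F(f)$ we have $g^{[4]} v_f=\mu\,v_f$ for some $\mu\in F^\times$ (with $v_f=A_4^{-1}f$), so projectively $g$ fixes $(z_0,\ldots,z_4)$ and sends $z_5\mapsto(\det(g)^2/\mu)\,z_5$. Hence $\widetilde{\mathrm{sgn}}(g)=\det(g)^2/\mu$. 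Using the equivariance $A_4 g^{[4]}=\det(g)^4 g_4 A_4$, this rephrases as $\tilde g\cdot f=\mu f$, where $\tilde g\cdot f(X,Y):=f(dX-bY,aY-cX)$, so the task reduces to identifying $\mu=\tilde g\cdot f/f$.

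Writing $f=\prod_{i=1}^4(Ys_i-Xt_i)$ and $v_i=(s_i,t_i)^T$, the roots of $\tilde g\cdot f$ are the vectors $gv_i$, so $gv_i=\lambda_i v_{\sigma(i)}$ with $\sigma=\sigma_g$ and $\lambda_i\in F^\times$. A direct expansion gives $\tilde g\cdot f=(\prod_i\lambda_i)f$, so $\mu=\prod_i\lambda_i$, and the desired identity $\widetilde{\mathrm{sgn}}(g)=\mathrm{sgn}(\sigma_g)$ becomes
\[
\textstyle\prod_{i=1}^4\lambda_i=\det(g)^2\,\mathrm{sgn}(\sigma_g).
\]
The main tool is the pairwise identity $\lambda_i\lambda_j\det(v_{\sigma(i)},v_{\sigma(j)})=\det(g)\det(v_i,v_j)$, obtained from $\det(gv_i,gv_j)=\det(g)\det(v_i,v_j)$. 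Multiplying the two such identities for a partition of $\{1,2,3,4\}$ into complementary pairs $\{i,j\}$ and $\{k,l\}$ yields
\[
\textstyle\prod_i\lambda_i=\det(g)^2\cdot\dfrac{\det(v_i,v_j)\det(v_k,v_l)}{\det(v_{\sigma(i)},v_{\sigma(j)})\det(v_{\sigma(k)},v_{\sigma(l)})}.
\]

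To finish, I would pick, for each $\sigma_g$ occurring in $\mathrm{Stab}(f)$, a pairing preserved by $\sigma_g$ (as an unordered set of two pairs) and verify that the ratio above equals $\mathrm{sgn}(\sigma_g)$. The hypothesis $L\neq L^\perp$ forces $I(f)\neq 0$, hence $\jmath(f)\neq 0$, so by Lemma \ref{stabilizer_lemma} and Proposition \ref{stabilizer_prop} the group $\mathrm{Stab}(f)$ contains no $3$-cycles---and $3$-cycles are the only permutation type that fails to preserve any of the three pairings. For $\sigma\in\langle\sigma_1,\sigma_2\rangle$ every pairing is preserved with each pair mapped to itself (possibly with entries swapped); numerator and denominator pick up matching signs, so the ratio is $+1=\mathrm{sgn}(\sigma)$. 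For a transposition (occurring as $\tilde\sigma_3$ in the $D_4$ cases) one selects the pairing whose first block is the support of $\sigma$: that block contributes $-1$, the fixed block contributes $+1$, giving ratio $-1=\mathrm{sgn}(\sigma)$. For a $4$-cycle (arising in $\mF_2'$ with $\jmath=1728$), the unique preserved pairing has its two pairs swapped by $\sigma$, and the determinantal bookkeeping again produces $-1=\mathrm{sgn}(\sigma)$. Since both $\widetilde{\mathrm{sgn}}$ and $\mathrm{sgn}\circ\sigma$ are homomorphisms $\mathrm{Stab}(f)\to\{\pm1\}$, agreement on generators of $\mathrm{Stab}(f)$ implies the lemma. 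The only delicate part is the pairing case analysis and careful tracking of the determinantal signs.
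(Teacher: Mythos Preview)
Your argument is correct and takes a genuinely different route from the paper's proof. The paper proceeds by conjugating $f$ to the normalized form $\psi_\lambda=XY(Y-X)(Y-\lambda X)$, writing down explicit generators $g_1,g_2$ (and $g_3$ when $\jmath=1728$) of $\mathrm{Stab}_F(\psi_\lambda)$, and then checking by direct coordinate computation via \eqref{eq:wedge2g4} and the explicit point \eqref{eq:L_lambda} that $g_1,g_2$ fix $\{L,L^\perp\}$ pointwise while $g_3$ swaps them. Your approach instead extracts the scalar identity $\widetilde{\mathrm{sgn}}(g)=\det(g)^2/\prod_i\lambda_i$ and evaluates $\prod_i\lambda_i$ through the pairwise determinant relation $\lambda_i\lambda_j\,d_{\sigma(i)\sigma(j)}=\det(g)\,d_{ij}$. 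This is more conceptual: it explains \emph{why} the sign character appears, and it avoids both the reduction to $\psi_\lambda$ and any explicit matrix computation. The paper's approach, by contrast, is shorter to execute once one has the explicit coordinates \eqref{eq:L_lambda} in hand.

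One point to tighten: the lemma is stated for $\mathrm{Stab}_F(f)$, not $\mathrm{Stab}(f)=\mathrm{Stab}_F(f)\cap PGL_2(q)$, and your final paragraph slides between the two (invoking Proposition~\ref{stabilizer_prop}, which concerns $\mathrm{Stab}(f)$). In fact only Lemma~\ref{stabilizer_lemma} is needed: since $I(f)\neq 0$ forces $\jmath(f)\neq 0$, the group $\mathrm{Stab}_F(f)$ is either $\langle\sigma_1,\sigma_2\rangle$ or the $D_4$ containing it, and neither contains $3$-cycles. Your case analysis (Klein four, transpositions, $4$-cycles) therefore already covers every element of $\mathrm{Stab}_F(f)$, and the closing remark about ``generators of $\mathrm{Stab}(f)$'' is unnecessary---you have verified the identity for each $g$ individually.
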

\begin{proof}
Let $g_0 \in PGL_2(F)$ with $g_0 \cdot (r_1,r_2, r_3,r_4) = (\infty,0,1, \lambda)$. Let $\psi_\lambda = XY(Y-X)(Y-\lambda X)$ and let $\sigma:\text{Stab}_F(\psi_\lambda) \to S_4$ be the homomorphism $g \mapsto \sigma_g$  of Lemma  \ref{stabilizer_lemma}   given by the permutation action of $\text{Stab}_F(\psi_\lambda)$ on $(\infty,0,1,\lambda)$. If $g \in \text{Stab}_F(f)$ and $h = g_0 g g_0^{-1} \in \text{Stab}_F(\psi_\lambda)$, then writing $(\infty, 0, 1, \lambda)$ as $(w_1, \dots, w_4)$ we have
\[ w_{\sigma_h(i)}=g_0g g_0^{-1} w_i =  g_0g r_i = g_0 r_{\sigma_g(i)} = w_{\sigma_g(i)},\]
which shows that $\sigma_g = \sigma_{h}$.  By  Lemma  \ref{stabilizer_lemma} we know that Stab$_F(\psi_\lambda)$ is isomorphic via the homomorphism $\mathrm{Stab}_F(\psi_\lambda) \to S_4$ to: 
\begin{enumerate}
\item $ \langle \sigma_1, \sigma_2 \rangle \simeq  \bZ/2 \bZ  \times \bZ /2 \bZ$ if $\lambda \notin \{-1,1/2,2\}$,
\item $ \langle \sigma_1, \sigma_2, \tilde\sigma_3  \rangle \simeq D_4$ if $\lambda \in \{-1,1/2,2\}$,
\end{enumerate}
 where  $\tilde\sigma_3$ equals  the transposition  $(12), (13)$ or  $(14)$ according as $\lambda = -1, 2$ or $1/2$, respectively.
  It is easy to check that $\sigma_1=\sigma_{g_1}$ for  $g_1(t)=\tfrac{\lambda}{t}$ and  $\sigma_2=\sigma_{g_2}$ for  $g_2(t)=\tfrac{t-\lambda}{t-1}$. The transposition  $\tilde\sigma_3$ is realized by $g_3(t) = \tfrac{1}{t},\tfrac{t}{t-1}$ or $ \tfrac{t}{2t-1}$ according as $\lambda = -1, 2$ or $1/2$.
  We also note that $\sigma(\text{Stab}(\psi_\lambda))$ is contained in the alternating group $A_4$ if $\lambda \notin \{-1,1/2,2\}$, and in case $\lambda \in \{-1,1/2,2\}$  we have sgn$(\sigma_g)=1$ for the subgroup  $\langle g_1, g_2 \rangle \simeq \bZ/2 \bZ  \times \bZ /2 \bZ$ and sgn$(\sigma_g)=-1$ for the coset $g_3 \langle g_1, g_2 \rangle$.   \\
By Lemma \ref{lem_pi}, the lines $\{g_0\cdot L, g_0 \cdot L^{\perp}\}$ have coordinates
\beq \label{eq:L_lambda} (0,1,\tfrac{2}{3} (1+\lambda), \lambda, 0, \pm \tfrac{2}{3} \sqrt{\lambda^2-\lambda+1}). \eeq  We recall  from \eqref {eq:wedge2g4} that the action of $g \in PGL_2$ on $\mQ$ is given in coordinates by
\[  g   \cdot \bbsm y_0 \\ y_1 \\ y_2 \\ y_3 \\ y_4 \\y_5 \besm  = \det(g) \bbm g^{[4]} \bbsm y_0 \\y_1 \\y_2 \\y_3 \\y_4 \besm \\ \det(g)^2 y_5 \bem. \]
Taking $g= g_1, g_2, g_3$, we see  that $g_1$  and $g_2$ fix $g_0\cdot L$ and $g_0\cdot L^\perp$, whereas $g_3$ interchanges them. Thus, $h \in \text{Stab}_F(\psi_\lambda)$ interchanges the  lines $\{g_0\cdot L, g_0 \cdot L^{\perp}\}$ if and only if $\text{sgn}(\sigma_h)=-1$. 
Returning to $g \in \text{Stab}_F(f)$ and $h =g_0 g g_0^{-1}   \in \text{Stab}_F(\psi_\lambda)$, we see from $g \cdot L =  g_0^{-1} h g_0 \cdot L$ that $h(g_0 \cdot L) = (g_0 \cdot L)^{\perp}$ if and only if 
$g \cdot  L = L^{\perp}$ (we note that $(g_0 \cdot L)^\perp = g_0 \cdot L^\perp$ because $L^{\perp} = \ast L$ and $\ast$ commutes with the action of $PGL_2$). Therefore, $g$ interchanges $L$ and $L^\perp$ if and only if $h$ interchanges $g_0 \cdot L$ and $g_0 \cdot L^\perp$. We have shown that the latter condition is equivalent to $\text{sgn}(\sigma_h)=-1$, and we have already shown that $\sigma_g = \sigma_h$. This completes the proof that $\widetilde{\text{sgn}}(g)=\text{sgn}(\sigma_g)$.

\end{proof}

\begin{prop} \label{selfdualj1728}
For an orbit $\mO \in \mF^+$:
\begin{enumerate}
\item if $\jmath(\mO)=0$, then $\mO$  lifts to a single orbit $\fO$ of the same size as $\mO$ and with  Stab$(\fO)$ isomorphic to Stab$(\mO)$.
\item if  $\jmath(\mO) \notin \{0, 1728\}$ and $\mO \subset \mF^+$, then $\mO$  lifts to a pair of (distinct) orbits  $\fO, \fO^\perp$ each of the same size as $\mO$ and with  Stab$(\fO)$ isomorphic to Stab$(\mO)$.
\item if $\jmath(\mO)=1728$ then of the $5$ orbits $G \cdot f$ in Table \ref{table:1}: 
\begin{enumerate}
\item [(i)] $G \cdot f \in \mF_4$ with $\Lambda_f = \{-1, 1/2, 2\}$.
\item [(ii)]  $G \cdot f \in \mF_4'$ with $\Lambda_f = \{-1\}$.
\item [(iii)] $G \cdot f \in \mF_4'$ with $\Lambda_f = \{1/2, 2\}$.
\item [(iv)] $G \cdot f \in \mF_2$ with $\Lambda_f = \{-1\}$.
\item [(v)] $G \cdot f \in \mF_2'$ with $\Lambda_f = \{-1\}$.
\end{enumerate}
The orbits (i)-(iii) are in $\mF^+$ if $q \equiv \pm 1 \mod 12$,  and the orbits (iv)-(v) are in $\mF^+$ if $q \equiv \pm 5 \mod 12$.  The orbit in $(iii)$ lifts to a pair of distinct orbits $\fO, \fO^\perp$  each of the same size as $\mO$ and with  Stab$(\fO)$ isomorphic to Stab$(\mO)$.
The orbit in each of the cases  (i),(ii),(iv),(v) lifts to a single self-dual orbit $\fO$ of double the size as $\mO$, and Stab$(\fO)$ is isomorphic to the index $2$ subgroup of Stab$(\mO)$ consisting of elements $g$ with $\text{sgn}(\sigma_g)=1$. 
\end{enumerate}

\end{prop}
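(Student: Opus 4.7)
The plan is to convert the question of whether $\pi^{-1}(\mO)$ splits as a pair of dual orbits $\{\fO,\fO^{\perp}\}$ or fuses into a single self-dual orbit $\fO$ into a sign-parity question on the stabilizer $\mathrm{Stab}(f)$ via Lemma \ref{lem_sgn}, and then read off the answer case by case from Proposition \ref{stabilizer_prop}. First, recall that since $\Delta(f)\neq 0$, we have $I(f)=0$ iff $\jmath(f)=0$, so the branch locus of $\pi$ restricted to $\mQ\setminus\mQ^{0}$ is exactly the fibre over the $\jmath=0$ orbits. This immediately handles the case $\jmath(\mO)=0$: the fibre $\pi^{-1}(f)$ is the single point $L=L^{\perp}$, so by $G$-equivariance the stabilizer of $L$ equals the stabilizer of $f$, and $\pi^{-1}(\mO)=\fO$ with $|\fO|=|\mO|$ and $\mathrm{Stab}(\fO)\cong\mathrm{Stab}(\mO)$.

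For $\jmath(\mO)\neq 0$, the fibre $\{L,L^{\perp}\}$ consists of two distinct points, and the restriction of $\widetilde{\mathrm{sgn}}$ from Lemma \ref{lem_sgn} to $\mathrm{Stab}(f)=\mathrm{Stab}_F(f)\cap PGL_2(q)$ is a homomorphism whose kernel is $\mathrm{Stab}(L)$. If $\widetilde{\mathrm{sgn}}\equiv 1$ on $\mathrm{Stab}(f)$, then $\mathrm{Stab}(L)=\mathrm{Stab}(f)$ and $\pi^{-1}(\mO)=\fO\sqcup\fO^{\perp}$ with $\fO\neq\fO^{\perp}$ and $|\fO|=|\mO|$; if $\widetilde{\mathrm{sgn}}$ is nontrivial on $\mathrm{Stab}(f)$, then $\mathrm{Stab}(L)$ is an index $2$ subgroup and $\pi^{-1}(\mO)=\fO$ is a single self-dual orbit of size $2|\mO|$. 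By Lemma \ref{lem_sgn}, $\widetilde{\mathrm{sgn}}(g)=\mathrm{sgn}(\sigma_g)$, so the whole question reduces to whether $\mathrm{Stab}(f)$, embedded as a subgroup of $S_4$ via $\sigma$, contains an odd permutation.

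The final step is a direct inspection using Proposition \ref{stabilizer_prop}. For $\jmath(\mO)\notin\{0,1728\}$, $\mathrm{Stab}(f)$ is one of $\langle\sigma_1,\sigma_2\rangle$, $\langle\sigma_1\rangle$ or trivial, all lying in $A_4$, so $\widetilde{\mathrm{sgn}}\equiv 1$ and case (2) of the proposition follows. For $\jmath(\mO)=1728$ we examine the five subcases:
\begin{itemize}
\item Cases (i), (ii): $\mathrm{Stab}(f)\cong D_4$ contains the transposition $\tilde\sigma_3$, which is odd; the kernel of $\mathrm{sgn}$ is $\langle\sigma_1,\sigma_2\rangle\cong\bZ/2\bZ\times\bZ/2\bZ$.
\item Case (iii): $\mathrm{Stab}(f)=\langle\sigma_1,\sigma_2\rangle$ lies in $A_4$, so the orbit splits into a dual pair.
\item Case (iv): $\mathrm{Stab}(f)=\langle\sigma_1,\sigma_3\rangle$ contains the transposition $\sigma_3=(12)$; the kernel is $\langle\sigma_1\rangle\cong\bZ/2\bZ$.
\item Case (v): $\mathrm{Stab}(f)=\langle\sigma_3\sigma_2\rangle=\langle(1324)\rangle\cong\bZ/4\bZ$, and the $4$-cycle is odd; the kernel is $\langle(1324)^2\rangle=\langle\sigma_1\rangle\cong\bZ/2\bZ$.
\end{itemize}
Combined with the orbit-stabilizer count above, this yields exactly the statements claimed in (3), completing the proof.

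There is no serious obstacle here — Lemma \ref{lem_sgn} does the conceptual work, and Proposition \ref{stabilizer_prop} supplies all the data. The only place that requires a moment of care is verifying that $\mathrm{Stab}(\fO)$ is the \emph{kernel} of $\widetilde{\mathrm{sgn}}$ (not an arbitrary complement), which follows because $L\neq L^{\perp}$ so an element of $\mathrm{Stab}(f)$ stabilizes $L$ iff it does not interchange $L$ with $L^{\perp}$.
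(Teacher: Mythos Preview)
Your proof is correct and follows essentially the same approach as the paper: reduce the splitting question to whether $\sigma(\mathrm{Stab}(f))$ contains an odd permutation via Lemma \ref{lem_sgn}, then read off each case from Proposition \ref{stabilizer_prop}. The only omission is that you do not address the clause about which of the orbits (i)--(v) actually lie in $\mF^+$ depending on $q \bmod 12$; the paper dispatches this by citing part (2) of Proposition \ref{J+prop}.
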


\begin{proof}
Each orbit  $\mO$ with $\jmath(\mO)=0$, by definition lifts to a single orbit $\fO$ with $\pi:\fO \to \mO$ being bijective. This proves (1) above. 

Let $\fO = G \cdot L$ with $L \neq L^\perp$, and let $\pi(\fO)=G \cdot f$. We have seen in Lemma \ref{lem_sgn}, that there exists $g \in \text{Stab}(f)$  with $g \cdot L = L^{\perp}$ if and only if sgn$(\sigma_g)=-1$. In other words, if $\sigma(\text{Stab}(f)) \subset A_4$, then $\fO$ and $\fO^\perp$ are distinct orbits and $\text{Stab}(\fO),  \text{Stab}(\fO^\perp)$ are isomorphic to $\text{Stab}(f)$.
On the other hand, if $\sigma(\text{Stab}(f)) \subsetneq A_4$, then $G \cdot f$ lifts to  a single orbit $\fO$ with Stab$(\fO)$ is isomorphic to the index $2$ subgroup of Stab$(\mO)$ consisting of elements $g$ with $\text{sgn}(\sigma_g)=1$.

We have also shown that in case $\jmath(f) \neq 0,1728$, we have sgn$(\sigma_g)=1$ for all $g \in \text{Stab}_F(f)$ (where $F$ is a splitting field of $f$). This proves (2) above. \\

Of the five orbits (i)-(v) above with $j(\mO)=1728$,   there exists $g \in \text{Stab}(f)$ with $\text{sgn}(\sigma_g)=-1$  only in  the cases (i), (ii), (iv), (v).  This proves (3) above.
By part (2) of Proposition \ref{J+prop}, the orbits (i)-(iii) are in $\mF^+$ if $q \equiv \pm 1 \mod 12$, and the orbits (iv)-(v) are in $\mF^+$ if $q\equiv \pm 5 \mod 12$.
\end{proof} 
In the Proposition above, we have determined the isomorphism class of the stabilizer subgroup of each line orbit $\fO$ in terms of the stabilizer subgroup of $\mO=\pi(\fO)$. It will be useful to record this:
\begin{cor} \label{cor_stab}
$\text{Stab}(\fO)$  is isomorphic to $\text{Stab}(\mO)$ where $\mO = \pi(\fO)$  for all orbits except the orbits listed as (i), (ii), (iv), (v) in Proposition \ref{selfdualj1728}. For these four orbits we have 
\beq \label{eq:stab_self_dual}   
\text{Stab}(\fO) \cong
\begin{cases} 
\langle \sigma_1, \sigma_2 \rangle \simeq \bZ/2 \bZ \times \bZ/2 \bZ &\text{ for orbits (i)-(ii),  here $q \equiv \pm 1 \mod 12$.}\\
\langle \sigma_1 \rangle \simeq \bZ/2 \bZ &\text{ for orbit (iv),  here $q \equiv \pm 5 \mod 12$.}\\
\langle (\sigma_3  \sigma_2)^2 =\sigma_1 \rangle \simeq \bZ/2 \bZ &\text{ for orbit (v), here $q \equiv \pm 5 \mod 12$.}\end{cases} \eeq
\end{cor}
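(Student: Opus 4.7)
The plan is to read the isomorphism class of $\text{Stab}(\fO)$ directly off the corresponding statement in Proposition~\ref{selfdualj1728}, and then, for the four exceptional cases, compute the index-$2$ subgroup of even permutations inside $\text{Stab}(\mO)$ explicitly. For any orbit $\fO$ not listed as (i), (ii), (iv), (v) in Proposition~\ref{selfdualj1728} (that is, $\jmath(\mO)=0$, or $\jmath(\mO)\not\in\{0,1728\}$, or case (iii) with $\jmath(\mO)=1728$), parts (1)-(2) and subcase (iii) of part (3) of that proposition assert $\text{Stab}(\fO)\cong\text{Stab}(\mO)$ outright, which gives the first clause of the corollary.

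For the four remaining orbits, part~(3) of Proposition~\ref{selfdualj1728} identifies $\text{Stab}(\fO)$ with the set of $g\in\text{Stab}(\mO)$ for which the interchange involution $\widetilde{\text{sgn}}$ of Lemma~\ref{lem_sgn} is trivial; by that same lemma this is exactly the kernel of $\text{sgn}\circ\sigma:\text{Stab}(\mO)\to\bZ/2\bZ$. Thus the only remaining task is, for each case, to intersect the presentation of $\text{Stab}(\mO)$ provided by Proposition~\ref{stabilizer_prop} with $A_4\subset S_4$.

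In cases (i) and (ii) the stabilizer is $\langle\sigma_1,\sigma_2,\tilde\sigma_3\rangle\cong D_4$; the elements $\sigma_1=(12)(34)$, $\sigma_2=(13)(24)$, and their product $\sigma_1\sigma_2=(14)(23)$ are all products of two disjoint transpositions and hence even, while $\tilde\sigma_3$ is a single transposition. Therefore the even elements form precisely the Klein four-group $\langle\sigma_1,\sigma_2\rangle\cong\bZ/2\bZ\times\bZ/2\bZ$. In case (iv) the stabilizer $\langle\sigma_1,\sigma_3\rangle\cong\bZ/2\bZ\times\bZ/2\bZ$ contains the transposition $\sigma_3=(12)$, so its even part is $\langle\sigma_1\rangle\cong\bZ/2\bZ$. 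In case (v) the stabilizer $\langle\sigma_3\sigma_2\rangle=\langle(1324)\rangle\cong\bZ/4\bZ$ is generated by a $4$-cycle (odd), so its unique index-$2$ subgroup is $\langle(1324)^2\rangle=\langle(12)(34)\rangle=\langle\sigma_1\rangle\cong\bZ/2\bZ$.

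There is no serious obstacle here: once Proposition~\ref{selfdualj1728} and Lemma~\ref{lem_sgn} are in place, the entire content of the corollary reduces to the elementary parity computation above inside $S_4$.
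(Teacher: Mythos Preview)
Your proof is correct and follows essentially the same approach as the paper: both observe that the first clause is immediate from Proposition~\ref{selfdualj1728}, and for the four exceptional orbits both compute $\sigma(\text{Stab}(f))\cap A_4$ using the explicit presentations from Proposition~\ref{stabilizer_prop}. Your version simply spells out the parity check in $S_4$ in slightly more detail.
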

\bep
We only need to prove \eqref{eq:stab_self_dual}. For these  orbits (i), (ii), (iv), (v), the group $\sigma(\text{Stab}(f))$ is given in cases (i), (ii), (iv), (v),  respectively, of 
Proposition \ref{stabilizer_prop}. Since $A_4=\langle \sigma_1, \sigma_2, \sigma_4 \rangle$, the index $2$ subgroup $\sigma(\text{Stab}(f)) \cap A_4$ of $\sigma(\text{Stab}(f))$ is as listed in \eqref{eq:stab_self_dual}.
\eep

\subsection*{Proof of Theorem  \ref{thm_main} }
We are now ready to prove our main result, Theorem \ref{thm_main}. We recall from Table \ref{table:1} that $G$  orbits of various sizes  in $\mF\setminus\mF^0$ are:\\

\begin{tabular}{c| *{6}{c}}
   & $|G|$ & $|G|/2$ & $|G|/3$ & $|G|/4$ & $|G|/12$   &  $|G|/8$  \\
&&&&&&\\  \hline \\
$\jmath(f)\in J_4$ & $0$ & $0$ & $0$ & $4$ & $0$ &  $0$  \\
$\jmath(f)\in J_2$ & $0$ & $2$ & $0$ & $0$ & $0$ &  $0$  \\
$\jmath(f)\in J_1$ & $1$ & $0$ & $0$ & $0$ & $0$ &  $0$  \\
$\jmath(f)=1728$ & $0$ & $0$ & $0$ & $3$ & $0$ &  $2$  \\
$\jmath(f)=0$  & $0$ & $(1-\mu)$ & $(1+\mu)$ & $\tfrac{1+\mu}{2}$ &  $\tfrac{1+\mu}{2}$  &  $0$
  \\ [1ex]
\hline  \\ [1ex]
\end{tabular}
\[\]

We will show that the number of orbits $\fO$  in $\mQ \setminus \mQ^0$, with $\jmath(\fO) =0, 1728$ and $\jmath(\fO) \in \bF_q \setminus \{0, 1728\}$ and their sizes are as given in the Table \ref{table:2} which we reproduce here:\\
\noindent \begin{tabular}{c| *{5}{c}}
$\jmath(\fO)$   & $|G|$ & $\tfrac{|G|}{2}$ & $\tfrac{|G|}{3}$ & $\tfrac{|G|}{4}$ & $\tfrac{|G|}{12}$     \\
&&&&&\\  \hline \\
$\jmath\in J_4^+$ & $0$ & $0$ & $0$ & $8$ & $0$   \\
$\jmath\in J_2^+$ & $0$ & $4$ & $0$ & $0$ & $0$   \\
$\jmath\in J_1^+$ & $2$ & $0$ & $0$ & $0$ & $0$   \\
$\jmath=1728$ & $0$ &  \small{$\begin{cases}  0 &\text{ if $q \equiv \pm 1 \mod 12$} \\
2 &\text{ if $q \equiv \pm 5 \mod 12$} \end{cases}$}  & $0$ & 
 \small{$\begin{cases} 4 &\text{ if $q \equiv \pm 1 \mod 12$} \\
0 &\text{ if $q \equiv \pm 5 \mod 12$} \end{cases}$} & $0$   \\
$\jmath=0$  & $0$ &  \small{$(1-\mu)$} &  \small{$(1+\mu)$} & $\tfrac{1+\mu}{2}$ &  $\tfrac{1+\mu}{2}$  
  \\ [1ex]
\hline
\\
total & $\tfrac{q-\mu}{3}$ &  \small{$q-1$} &   \small{$(1+\mu)$} & $\tfrac{2q-10 -(1+\mu)/2}{3}$ &  $\tfrac{1+\mu}{2}$. \\ [1ex]
\end{tabular}
\[\]
First, we determine the size of each orbit $\fO$. We have shown in Proposition \ref{selfdualj1728}, that 
Stab$(\fO)$ is isomorphic to Stab$(\mO)$ where $\mO=\pi(\fO)$, in all cases except the orbits listed as (i),(ii),(iv),(v) therein.
 In these four cases, Stab$(\fO)$ is isomorphic to the index $2$ subgroup of Stab$(\mO)$ given in \eqref{eq:stab_self_dual}.  In particular, the size of each orbit $\fO$ is the same as $\mO=\pi(\fO)$ except in these four cases, where $|\fO|=2 |\pi(\fO)|$.

 \emph{Orbits $\fO$ with $\jmath(\fO)=0$}: By part 1) of Proposition \ref{selfdualj1728},  there is one orbit $\fO=\pi^{-1}(\mO)$ for each orbit $\mO$ with $\jmath(\mO)=0$. Therefore,  the last  row of Table \ref{table:2} is identical with the  third row of Table \ref{table:1}.

\emph{Orbits $\fO$ with $\jmath(\fO) \notin \{0, 1728\}$}:   By part 2) of Proposition \ref{selfdualj1728}, the orbits $\mO$ in each of the first three rows of Table \ref{table:1} lift to  distinct orbits $\fO, \fO^\perp$  provided $\mO \subset \mF^+$.  Thus, the first three rows of Table \ref{table:2} are obtained from  the corresponding  rows of Table \ref{table:1}, by multiplying the entries by $2$, and taking only rows representing  orbits for which $\jmath(\mO) \in J_4^+ \cup J_2^+\cup J_1^+$.

\emph{Orbits $\fO$ with $\jmath(\fO) =1728$}: 
By part 3) of Proposition \ref{selfdualj1728}, in case  $q \equiv \pm 1 \mod 12$ only the $3$ orbits (i)- (iii) are in $\mF^+$, whereas if $q \equiv \pm 5 \mod 12$, then only the orbits (iv)-(v) are in $\mF^+$. 
\qed
\[\]

We have determined the $(2q-3+\mu)$ orbits of generic lines in the Klein representation in $PG(5,q)$. The usual way to  representation  lines of $PG(3,q)$  is to give a pair of generators in $PG(3,q)$ for the line (as has been done in the works \cite{DMP1}, \cite{DMP2} and \cite{GL} for some line orbits).  We now tabulate the  generators of each of the $(2q-3+\mu)$  orbits of generic lines.  Given a representative $(z_0, \dots, z_5) \in \mQ$ of a line $L$, the  Pl\"{u}cker coordinates $p_{ij}$ of $L$ can be found using \eqref{eq:p_z}. If some Pl\"{u}cker $p_{ij}(L)$ is non-zero, say if $p_{01}(L) \neq 0$, then there is a unique pair of  generators of $L$ of the form $(1,0,x_2,x_3), (0,1,y_2,y_3)$. We can represent these generators by the rows of a $2 \times 4$ matrix having $\bbsm 1 & 0\\ 0& 1 \besm$ as a submatrix. This is the method we use to obtain the generator matrices below for all the line orbits.
As an example, we consider the orbit $G \cdot \psi_\lambda$ from \eqref{eq:psilambda}:
\[ G \cdot \psi_\lambda, \,  \psi_\lambda= XY(Y-X)(Y-\lambda X), \, \lambda \in \{x\in\bF_q:x^2-x+1 \text{ is a square in $\bF_q\setminus\{1\}$}\}.\]
 We have shown in \eqref{eq:L_lambda} that the coordinates $(z_0, \dots, z_5)$ of the lines $L, L^{\perp}$ with $\pi(L)=\psi_\lambda$ are  $(0,1,2(1+\lambda)/3, \lambda,0, \pm  \tfrac{2}{3} \sqrt{\lambda^2-\lambda+1})$.   Using \eqref{eq:p_z}, the  Pl\"{u}cker coordinates of these lines are:
\[p_{01}=p_{23}=0, p_{02}=1, p_{03}=1+\lambda + s, p_{12}=\tfrac{1+\lambda -s}{3}, p_{13}=\lambda \text{ where }  s=\pm \sqrt{\lambda^2-\lambda+1}. \]
Since $p_{02}=1$ there exists a unique pair of generators of the form $(1,x_1,0,x_3), (0,y_1,1,y_3)$. 
Since $p_{01}=y_1, p_{23}=-x_3$ we see that $y_1=x_3=0$. Since $p_{03}=y_3$ and $p_{12} = x_1$, we get $y_3=1+\lambda + s$ and $x_1=(1+\lambda -s)/3$:
\[ M^{\pm}(\psi_{\lambda}) = \bbsm 1 & (1+\lambda -s)/3& 0 & 0\\0&0&1& 1+\lambda+s \besm, \quad s = \pm \sqrt{\lambda^2-\lambda+1}. \]
If $\lambda \in \{-\omega, -\omega^2\}$ (when $q \equiv 1 \mod 3$) then $L = L^{\perp}$ is represented by 
\beq \label{eq:psi_omega} M(\psi_{-\omega}) = \bbsm 1 & (1-\omega)/3& 0 & 0\\0&0&1& 1-\omega \besm.\eeq
If the superscript $\pm$ appears on a generator matrix, it is understood that it stands for two  generator matrices representing a pair of lines $L, L^\perp$.  In Table \ref{table:4} below, we use the notation of Table \ref{table:3}.  The last column of the Table \ref{table:4} gives the isomorphism class of Stab$(\fO)$ for each orbit $\fO$, which we have already determined in Corollary \ref{cor_stab}. We begin with orbits $\fO$ with $\pi(\fO) \in \mF_4 \cup \mF_2 \cup \mF_1$. The orbits $\fO, \fO^{\perp}$ with $\jmath(\fO) \in J_4^+ \cup J_2^+ \cup J_1^+$ are represented by:
\beq M_r^\pm(\mathcal E) = \bbm \frac{16s}{3} & 2 & 1 &0\\
\frac{128s^2}{27}&\frac{16s}{9}&0&1 \bem, \quad s=\pm \sqrt{1 - \tfrac{1728}{r}}. \eeq

The orbit $\mO=G \cdot XY(Y-X)(Y+X)$ in $\mF_4$ with $\jmath(\mO)=1728$ equals $G \cdot \psi_{-1}$. It lifts to a single self-dual  orbit $\fO$ represented by 
\beq M(\psi_{-1})= \bbsm 1 & -1/s & 0 & 0\\0&0&1& s \besm, \quad s =  \sqrt 3, \, q \equiv \pm 1 \mod 12. \eeq

The orbit $\mO=G \cdot  X( Y^3  -X^3)$  in $\mF_4$ with $\jmath(\mO)=0$ can also be represented as $G \cdot \psi_{-\omega}$. This orbit   lifts to a single orbit $\fO$ represented by the matrix $M(\psi_{-\omega})$ in \eqref{eq:psi_omega} above.

The orbit $\mO=G \cdot XY(Y^2-\ep X^2)$ in $\mF_2$ with $\jmath(\mO)=1728$ 
lifts to a single self-dual orbit $\fO$ represented by 
\beq M(\upsilon_{0})= \bbsm 1 & -s/3 & 0 & 0\\0&0&1& s \besm, \quad s =  \sqrt{ 3 \ep}, \, q \equiv \pm 5 \mod 12. \eeq

The orbit $\mO=G \cdot  X( Y^3  -r X^3)$  with $\jmath(\mO)=0$  lifts to a single orbit $\fO$
represented by
\beq M_{X(Y^3-rX^3)}= \bbsm 1 & 0 & 0 & -2r\\0&0&1& 0 \besm. \eeq
Thus, the orbits (a)  $G \cdot X(Y^3  - X^3)$ for $q \equiv 2 \mod 3$, (b) $G \cdot X( Y^3  -\gamma X^3)$ for $q \equiv 1 \mod 3$, and (c) $G \cdot X(Y^3  -\gamma^2 X^3)$ for $q \equiv 1 \mod 3$, each lift to a single orbit generated by (a) $M_{X(Y^3-X^3)}$, (b) $M_{X(Y^3-\gamma X^3)}$, (c) $M_{X(Y^3-\gamma^2 X^3)}$, respectively.\\

We recall from Table \ref{table:3} the quantities  $\alpha_{-1}$, $\alpha_{\{-\omega,-\omega^2\}}$ and $\alpha_{\{\lambda, \lambda^{-1}\}}$ which are mapped to $-1, -\omega$ and $\lambda \in   \bF_q\setminus \{0, \pm 1, -\omega,-\omega^2\}$, respectively, under the map $\chi_1(\alpha) = N_{q^2/q}(\tfrac{\alpha - \sqrt\ep}{\alpha+\sqrt\ep})$. The generator matrices for the orbits $\fO$ for which $\pi(\fO)$ is  in $\mF_4'$ has these representatives are:
\beq
M_{\alpha_{-1}}(\psi') = \bbsm  1& 0& \ep y/\sqrt3& \ep x\\
0&1& x&  \ep y \sqrt3 \besm, \quad \alpha_{-1} = x + \sqrt\ep y. \eeq
\beq   M_{\alpha_{\{-\omega,-\omega^2\}}}(\psi') = \bbsm 
1& 0& \ep y/\sqrt{-3}&  \ep x\\
0&1& x&  \ep y \sqrt{-3}\besm, \quad \alpha_{\{-\omega,-\omega^2\}} = x+\sqrt\ep y. \eeq
\begin{table}[b]
\begin{tabular}{||c|c|c|c||}
\hline
Generators of $\fO$ & $\jmath(\fO)$ & Type of $\pi(\fO)$ & Stabilizer    \\
&&&\\  
\hline  
&&&\\  
$M^\pm_r, \;  r \in J_4^+$ \; ($2|J_4^+|$ orbits)& $r \in J_4^+$ & $\mF_4$&  $\bZ/2\bZ \times \bZ/2\bZ$ \\
&&&\\  
$M^\pm_r, \;  r \in J_2^+$  \; ($2|J_2^+|$ orbits)& $r\in J_2^+$ & $\mF_2$&  $\bZ/2\bZ$ \\
&&&\\  
$M^\pm_r, \;  r \in J_1^+$  \; ($2|J_1^+|$ orbits)& $r\in J_1^+$ &$ \mF_1$&  trivial \\
&&&\\  
\hline 
&&&\\  
$M(\psi_{-1}), \;q \equiv \pm 1 \mod 12$ & $1728$ & $\mF_4$ & $\bZ/2\bZ \times \bZ/2\bZ$ \\
$ M(\psi_{-\omega}),\; q \equiv 1 \mod 3$ & $0$ & $\mF_4$ & $A_4$\\
$M(\upsilon_{0}),  \;q \equiv \pm 5 \mod 12$ & $1728$ & $\mF_2$ & $\bZ/2\bZ$ \\
$M_{X(Y^3- X^3)} \; q \equiv -1 \mod 3 $ & $0$ & $\mF_2$ & $\bZ/2\bZ$\\
$M_{X(Y^3-\gamma X^3)}  \; q \equiv 1 \mod 3 $ & $0$ & $\mF_1$ & $\bZ/3\bZ$\\
$M_{X(Y^3-\gamma^2 X^3)} \; q \equiv 1 \mod 3$ & $0$ & $\mF_1$ & $\bZ/3\bZ$\\
&&&\\  
\hline 
&&&\\  
$M_{\alpha_{-1}}(\psi'),  \;q \equiv \pm 1 \mod 12$  &  $1728$ & $\mF_4'$ & $\bZ/2\bZ \times \bZ/2\bZ$  \\

$M_{\alpha_{\{2, 1/2\}}}^\pm(\psi'), \; q \equiv \pm 1 \mod 12 $  &  $1728$ & $\mF_4'$ & $\bZ/2\bZ \times \bZ/2\bZ$  \\
$ M_{\alpha_{\{-\omega, -\omega^2 \}}}(\psi'),\;  q \equiv 1 \mod 3$  &  $0$ & $\mF_4'$ & $\bZ/2\bZ \times \bZ/2\bZ$  \\
&&&\\  
$ M_{\alpha_{\{\lambda, \lambda^{-1}\}}}^\pm(\psi'), \;  \lambda \in \mN_4\,$  (total $6|J_4^+|$ orbits)    &  $\jmath(\lambda)\in J_4^+$ & $\mF_4'$ & $\bZ/2\bZ \times \bZ/2\bZ$ \\  
&&&\\  
\hline
&&&\\ 
$M_{t_{-1}}(\upsilon', 1),  \quad q \equiv  5 \mod 12$ & $1728$ &  $\mF_2'$ &  $\bZ/2\bZ$\\
&&&\\  
$M_{t_{\{-\omega,-\omega^2\}}}(\upsilon', 1), \quad q \equiv 5 \mod 12$  & $0$& $\mF_2'$ &$\bZ/2\bZ$\\
&&&\\  
$M_{t_{\{\lambda, \lambda^{-1}\}}}^{\pm}(\upsilon', 1), \,\lambda \in \mN_2\,$  (total $2|J_2^+|$ orbits)& $ \jmath(\lambda)\in J_2^+$& $\mF_2'$ &$\bZ/2\bZ$\\
here $q \equiv 1 \mod 4$. &&&\\
\hline  
&&&\\
$M_{t_{-1}}(\upsilon', 3), \quad  q \equiv -5 \mod 12$ & $1728$ &  $\mF_2'$ &  $\bZ/2\bZ$\\
&&&\\  
$M_{t_{\{-\omega,-\omega^2\}}}(\upsilon', 3), \quad q \equiv -1 \mod 12$  & $0$& $\mF_2'$ &$\bZ/2\bZ$\\
&&&\\  
$M_{t_{\{\lambda, \lambda^{-1}\}}}^{\pm}(\upsilon', 3),\,  \lambda \in \mN_2\,$, 
(total $2|J_2^+|$ orbits)& $ \jmath(\lambda)\in J_2^+$& $\mF_2'$ &$\bZ/2\bZ$\\
here $q \equiv 3 \mod 4$.
&&&\\
\hline  
\end{tabular}
\[\]
\caption{Generators  and stabilizers of $G$-orbits of generic lines of $PG(3,q)$}
 \label{table:4}

\end{table}
\begin{multline} M_{\alpha_{\{\lambda, \lambda^{-1}\}}}^\pm(\psi') = \bbm 
1& 0& \frac{- \ep y (1+\lambda)}{3(1 - \lambda)}  +s&  \ep x\\
0&1& x&   \tfrac{\ep y (1+\lambda)}{1 - \lambda} + 3 s\bem, \\ \alpha_{\{\lambda, \lambda^{-1}\}} = x + \sqrt\ep y, \; 
 s  = \pm\frac{2\ep y \sqrt{ \lambda^2 - \lambda +1}}{3(1-\lambda)}. \end{multline}

We recall from Table \ref{table:3}, the representatives $\upsilon'_{-1}$, $\upsilon'_{\{-\omega,-\omega^2\}}$ and $\upsilon'_{\{\lambda, \lambda^{-1}\}}$ for  $\lambda \in \mN_2$, for the orbits in $\mF_2'$.  The generator matrices for the orbits $\fO$ for which $\pi(\fO)$ has these representatives are as below.  The cases $q \equiv 1 \mod 4$ and $q \equiv 3 \mod 4$ are treated separately.
The notation $M_t(\upsilon',1)$ and  $M_t(\upsilon',3)$ refer to these two cases.

\beq  M_{t_{-1}}(\upsilon', 1) = \bbsm 1& 0& \tfrac{s}{3}& 0\\
0&1& 0&  s\besm, \quad s =  \sqrt{-3\gamma}.  \eeq

\beq  M_{t_{\{-\omega,-\omega^2\}}}(\upsilon', 1) = \begin{cases} 
 \bbsm 1& 0& t/3& -2 \sqrt{t\gamma}\\ 
0&1& 0&  -t\besm  &\text{if $\frac{-\sqrt{3 \gamma}}{2}  \notin (\bF_q^\times)^2$} \\
 \bbsm 1& 0& \gamma t/3& -2 \gamma^2 \sqrt{t}\\
0&1& 0&  -\gamma t\besm  &\text{if $\frac{-\sqrt{3 \gamma}}{2}  \in (\bF_q^\times)^2$} \end{cases},\quad t = \frac{-\sqrt{3 \gamma}}{2}.  \eeq
\beq  M_{t_{\{\lambda, \lambda^{-1}\}}}^{\pm}(\upsilon', 1) = \begin{cases} 
 \bbsm 1& 0& \tfrac{t+s}{3}& -2 \sqrt{t\gamma}\\
0&1& 0&  s-t\besm  &\text{if $\ep t \in (\bF_q^\times)^2$} \\
 \bbsm 1& 0& \tfrac{\gamma(t+s)}{3}& -2 \gamma^2 \sqrt{t}\\
0&1& 0&  \gamma(s-t)\besm &\text{if $t \in (\bF_q^\times)^2$} \end{cases},  \,  s = \pm \tfrac{2 \imath \theta^2  \sqrt{\lambda^2-\lambda+1}}{1-\lambda},\, t=\tfrac{\imath \theta^2 (1+\lambda)}{2(1-\lambda)}. \eeq

\beq  M_{t_{-1}}(\upsilon', 3)  = \begin{cases} 
 \bbsm 1& 0& \tfrac{3\theta_0+2s}{6}&  \theta_1 \sqrt{2\theta_0}\\
0&1& 0&  s+\tfrac{3\theta_0}{2} \besm  &\text{if $q \equiv 11 \mod 24$} \\
 \bbsm 1& 0& \tfrac{3\theta_0+2s}{6} &  \theta_1 \sqrt{-2\theta_0}\\
0&1& 0&   s-\tfrac{3\theta_0}{2} \besm &\text{if $q \equiv -1 \mod 24$} \end{cases},  
\quad  s =  \sqrt{-3}, \, t=-\tfrac{\theta_0}{2}. \eeq

\beq  M_{t_{\{-\omega,-\omega^2\}}}(\upsilon', 3) =\begin{cases} 
 \bbsm 1& 0& \tfrac{t-\theta_0}{3}& 2 \theta_1 \sqrt{-t}\\
0&1& 0&  \theta_0-t\besm  &\text{if $-t \in (\bF_q^\times)^2$} \\
 \bbsm 1& 0& \tfrac{-t+\theta_0}{3} & 2 \theta_1 \sqrt{t}\\
0&1& 0&   -\theta_0+t \besm &\text{if $t \in (\bF_q^\times)^2$} \end{cases},  
\quad  \, t=\tfrac{\sqrt 3 - \theta_0}{2}. \eeq

\beq  M_{t_{\{\lambda, \lambda^{-1}\}}}^{\pm}(\upsilon', 3) = \begin{cases} 
 \bbsm 1& 0& \tfrac{t-\theta_0+s}{3}& 2 \theta_1 \sqrt{-t}\\
0&1& 0&  s+\theta_0-t\besm  &\text{if $-t \in (\bF_q^\times)^2$} \\
 \bbsm 1& 0& \tfrac{-t+\theta_0+s}{3} & 2 \theta_1 \sqrt{t}\\
0&1& 0&   s-\theta_0+t \besm &\text{if $t \in (\bF_q^\times)^2$} \end{cases},  
\, s = \pm\tfrac{2  \sqrt{-\lambda^2+\lambda-1}}{1-\lambda}, \, t=
-\tfrac{\theta_0}{2}+\tfrac{\imath  (\lambda+1)}{2(\lambda-1)}. \eeq

\appendix
\section{} \label{appendix}
We continue to use the notation of Section \S \ref{setup}:
\begin{description}
    \item[$V=F^2$] is a two-dimensional vector space over a field $F$.
    \item [$D_mV$, $\mathrm{Sym}^m V$  $\mathrm{and}$  $\mathrm{Sym}^m(V^*)$] are associated tensor spaces.
    \item [$C_m$] is the degree $m$ rational normal curve in $\bP(D_m V)$.
    \item [$\mO_{m-1}(P)$] is the osculating hyperplane to $C_m$ at a point $P$.
    \item[$\Omega_m$] is the bilinear form on $D_mV$ induced by the map $P \mapsto \mO_{m-1}(P)$.
\end{description}
We refer the reader to \S \ref{setup} for the detailed definitions. In this section, we address the questions of  when  the  $GL_2(F)$-modules  $D_m V, \text{Sym}^m V$ and $\text{Sym}^m(V^*)$ are i) irreducible, and ii) isomorphic. 
 We work over an arbitrary field $F$ but for one assumption: 
\begin{assumption} \label{assumption} $m + 2 \leq |F|$ in case $F$ is finite. 
\end{assumption}
\begin{thm} \label{lcm_cond} The following conditions are equivalent:
\begin{enumerate}
\item The binomial coefficients $\binom{m}{0}, \dots, \binom{m}{m}$ are all nonzero in $F$. 
\item Either char$(F)=0$ or $\text{char}(F)=p$ and $b < p$ where $m+1=p^a b$ is the unique factorization of $m+1$ with $b$ relatively prime to $p$.
\item The intersection of all osculating hyperplanes of $C_m(F)$ is empty.
\item  $D_m V$ is an irreducible representation of $GL_2(F)$. 
\item  The spaces  $D_m V, \text{Sym}^m V$ and $\text{Sym}^m(V^*) \otimes  (\otimes^m \wedge^2 V )$ are isomorphic as $GL_2(F)$-modules.
\end{enumerate}
\end{thm}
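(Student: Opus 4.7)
The plan is to deduce the five-way equivalence from the implications $(1) \Leftrightarrow (2)$, $(1) \Leftrightarrow (3)$, $(1) \Leftrightarrow (5)$, and $(1) \Rightarrow (4) \Rightarrow (3)$, with condition $(1)$ --- all binomial coefficients $\binom{m}{i}$ nonzero in $F$ --- as the central pivot. The equivalence $(1) \Leftrightarrow (2)$ is a restatement of Lucas' theorem: in characteristic $p$, $\binom{m}{i}$ is nonzero for every $0 \le i \le m$ iff every base-$p$ digit of $m$ below the leading one equals $p-1$, which is precisely the condition $m+1 = p^a b$ with $1 \le b < p$ (the coprimality $\gcd(b,p)=1$ excluding the case $b=p$). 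The equivalence $(1) \Leftrightarrow (3)$ is immediate from the defining equation $P_{(s,t)}(X,Y) = \sum_i (-1)^i \binom{m}{i} t^{m-i} s^i X^{m-i} Y^i$ of the osculating hyperplane at $\nu_m(s,t)$: the point $y = \sum_i y_i\, e_1^{[m-i]} e_2^{[i]} \in D_m V$ lies in every osculating hyperplane iff the degree-$m$ homogeneous polynomial $\sum_i (-1)^i \binom{m}{i} t^{m-i} s^i y_i$ vanishes for every $(s,t) \in F^2$, and the hypothesis $|F| \ge m+2$ lets us equate coefficients, giving $\binom{m}{i} y_i = 0$ for all $i$. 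For $(1) \Leftrightarrow (5)$ I combine two ingredients: first, the canonical $GL_2(F)$-equivariant isomorphism $V \simeq V^* \otimes \wedge^2 V$ given by $v \mapsto (w \mapsto v \wedge w)$, valid over any field, which on applying $\text{Sym}^m$ yields the unconditional isomorphism $\text{Sym}^m V \simeq \text{Sym}^m(V^*) \otimes (\otimes^m \wedge^2 V)$; second, the map $A_m$ from Definition \ref{polardual}, which is $GL_2(F)$-equivariant from $D_m V$ to $\text{Sym}^m(V^*) \otimes (\otimes^m \wedge^2 V)$ with $\det(A_m) = \pm \prod_i \binom{m}{i}$, hence an isomorphism precisely when $(1)$ holds. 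The converse $(5) \Rightarrow (1)$ uses Proposition \ref{equivs}: the $\text{Hom}_{GL_2(F)}$-space in question is one-dimensional and spanned by $A_m$, so any equivariant isomorphism must be a nonzero scalar multiple of $A_m$, forcing $A_m$ itself invertible.

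The chain $(1) \Rightarrow (4) \Rightarrow (3)$ is the technical heart. The implication $(4) \Rightarrow (3)$ is immediate, since the intersection of osculating hyperplanes is $GL_2(F)$-invariant and properly contained in $D_m V$ (it lies in any single osculating hyperplane), so irreducibility forces it to vanish. For $(1) \Rightarrow (4)$ I dualize and show $\text{Sym}^m(V^*)$ is irreducible: let $U \subset \text{Sym}^m(V^*)$ be a nonzero $GL_2(F)$-invariant subspace. The unipotent subgroup $U^+(F) = \{\bbsm 1 & b \\ 0 & 1 \besm : b \in F\}$ acts by $f(X,Y) \mapsto f(X-bY,Y)$, and unipotent (or finite $p$-group) actions on nonzero finite-dimensional vector spaces always have nonzero fixed vectors, so $U$ contains a nonzero $U^+(F)$-fixed element. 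The fixed subspace consists of homogeneous degree-$m$ polynomials $f$ with $f(X-bY,Y) = f(X,Y)$ for all $b \in F$; under $|F| \ge m+2 > m$ this subspace equals $\langle Y^m \rangle$, since any nontrivial $\bF_q$-translation invariant polynomial in $X$ is built from $X^q - XY^{q-1}$ and hence has degree at least $q > m$ in $X$. So $Y^m \in U$. Acting now by $U^-(F) = \{\bbsm 1 & 0 \\ c & 1 \besm\}$ gives $g_m(Y^m) = (Y - cX)^m = \sum_k (-1)^k \binom{m}{k} c^k X^k Y^{m-k}$; varying $c$ over at least $m+1$ distinct values of $F$ and inverting a Vandermonde matrix extracts each monomial, so $U$ contains $\binom{m}{k} X^k Y^{m-k}$ for every $k$, and under $(1)$ these span $\text{Sym}^m(V^*)$, giving $U = \text{Sym}^m(V^*)$.

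The main obstacle is the careful use of the cardinality hypothesis $|F| \ge m+2$ in proving $(1) \Rightarrow (4)$: it is indispensable both for identifying the $U^+(F)$-fixed subspace of $\text{Sym}^m(V^*)$ (to exclude $\bF_q$-invariants built from $X^q - XY^{q-1}$) and for the Vandermonde separation of the $U^-(F)$-orbit of $Y^m$. Without it, for small fields $\bF_q$ with $q \le m$, the representation $\text{Sym}^m(V^*)$ can acquire nontrivial proper $GL_2(\bF_q)$-subrepresentations even when $(1)$ holds --- a familiar feature of the modular representation theory of $GL_2(\bF_q)$ --- so the hypothesis is intrinsic to the statement.
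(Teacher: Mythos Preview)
Your proof is correct and complete. The logical skeleton $(1)\Leftrightarrow(2)$, $(1)\Leftrightarrow(3)$, $(1)\Leftrightarrow(5)$, $(1)\Rightarrow(4)\Rightarrow(3)$ closes the cycle, and each implication is justified.

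Your argument does, however, take a genuinely different route from the paper's at two of the five links. For $(1)\Leftrightarrow(2)$ you invoke Lucas' theorem directly, reading off the base-$p$ digit pattern of $m$; the paper instead appeals to Farhi's identity $\mathrm{lcm}\{\binom{m}{0},\dots,\binom{m}{m}\}=\mathrm{lcm}\{1,\dots,m+1\}/(m+1)$ and compares $p$-adic valuations. Both are short; yours is more self-contained. For $(1)\Rightarrow(4)$ the contrast is sharper. You dualize to $\mathrm{Sym}^m(V^*)$ and run a highest-weight argument: a nonzero $U^+(F)$-fixed vector exists in any nonzero invariant subspace (commuting unipotents on a finite-dimensional space), the cardinality hypothesis pins that vector to $Y^m$, and the opposite unipotent plus Vandermonde recovers the full basis. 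The paper instead works in $D_mV$ itself, uses the diagonal torus to force the reduced column echelon basis of a submodule $W$ to consist of standard basis vectors $e_1^{[m-i]}e_2^{[i]}$, and then argues geometrically: either $\nu_m(1,0)\in W$ (forcing $W=D_mV$) or $W$ lies in every osculating hyperplane (forcing $W\subset\ker A_m$). Your approach is the standard Lie-theoretic one and transfers readily to other algebraic groups; the paper's is tied to the geometry of $C_m$ and gives the stronger statement that every proper submodule is contained in $\ker A_m$, not merely that none exists when $A_m$ is invertible. Finally, for $(1)\Leftrightarrow(5)$ your use of the canonical isomorphism $V\simeq V^*\otimes\wedge^2 V$ to get $\mathrm{Sym}^mV\simeq\mathrm{Sym}^m(V^*)\otimes(\otimes^m\wedge^2V)$ unconditionally is a nice shortcut: it lets you invoke only part~(1) of Proposition~\ref{equivs}, whereas the paper treats $A_m$ and $T_m$ in parallel via parts~(1) and~(3).
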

We first show the equivalence of (1), (2) and (3). Let $A_m$ be the matrix in \eqref{eq:Am} and let $T_m$ be the matrix 
\beq T_m = \bbsm \binom{m}{0} & & &\\ &\binom{m}{1} && \\ && \ddots& \\ &&&\binom{m}{m} \besm, \qquad A_m =  \bbsm & & && \binom{m}{0} \\
&& &-\binom{m}{1}&  \\
&    &\iddots& &  \\
&(-1)^{m-1} \binom{m}{m-1}&&  & \\ 
(-1)^m \binom{m}{m}&& &  &  
 \besm. \eeq
We recall from Definition \ref{polardual} the bilinear form $\Omega_m$ on $D_m V$ whose matrix with respect to the basis $e_1^{[m+1-i]}e_2^{[i-1]}, i=1\dots m+1$ of $D_mV$  is $A_m$.
The equivalence of the first  two conditions is obvious if char$(F)=0$. In case char$(F)=p$, we use the identity (see \cite{Farhi}) 
\[ \text{lcm}\{ \tbinom{m}{0}, \dots, \tbinom{m}{m}\} = \frac{\text{lcm}\{1, \dots, m+1\}}{m+1}.\]
Let $m+1=p^a b$ be the unique factorization of $m+1$ with $b$ coprime to $p$. The largest power of $p$ that divides $\text{lcm}\{1, \dots, m+1\}$ is $a+ \lfloor \log_p b \rfloor$, and the largest power of $p$ that divides $m+1$ is just $a$. Therefore, $\tbinom{m}{0}, \dots, \tbinom{m}{m}$ are all relatively prime to $p$ if and only if $\lfloor \log_p b \rfloor=0$, i.e. $b<p$.\\

  The osculating hyperplane $\mO_{m-1}(P)$ to $C_m$ at $P=\nu_m(s,t)$ consists of all points $Q$ such that $\Omega_m(\nu_m(s,t), Q)=0$. Since any $(m+1)$ points of $C_m$ are linearly independent and there are at least $(m+1)$-points on $C_m(F)$ (by Assumption  \ref{assumption}) we see that the intersection of all osculating hyperplanes of $C_m(F)$ is the radical of the bilinear form $\Omega_m$, which is also the kernel of the matrix $A_m$.
 Clearly, $A_m$ is nonsingular if and only condition (1) holds. \\

We now show  the equivalence of  (4) and (1):  If $(1)$ does not hold, then $W_0=\text{ker}(A_m)$  is a proper $GL_2(F)$-submodule of $D_m V$. Conversely, we will show that any  proper $GL_2(F)$-submodule  $W \subset D_m V$ is contained in  $W_0$ :  let  $r=\text{dim}(W)$ with $1 \leq r \leq m$, and let $M$ be a $(m+1) \times r$ matrix in reduced column echelon form whose columns form a basis for $W$. Let $1  \leq \alpha_1 < \dots < \alpha_r  \leq m+1$ denote the pivot rows of $M$. Thus, the submatrix of $M$ on rows $\alpha_1, \dots, \alpha_r$ is the $r \times r$ identity matrix. For any $g \in GL_2(F)$, let $\tilde g$ be the $r \times r$ submatrix of $g^{[m]}$ on the rows and columns indexed by $\alpha_1, \dots, \alpha_r$. The $GL_2(F)$-invariance of $W$ implies $g^{[m]} M = M \tilde g$. Taking $g$ to be represented by the matrix  $\bbsm 1 & \\ & \lambda \besm$, we see that $M_{ij} (\lambda^{i-\alpha_j}-1)=0$ for all  $\lambda \in F^{\times}$.   If $M_{ij} \neq 0$ and $i \neq \alpha_j$  then we must have  $\lambda^{i-\alpha_j}=1$ for all  $\lambda \in F^{\times}$.   If $F$ is infinite, this is impossible. If $|F|=q$ is finite, then $(q-1)$ must divide $i-\alpha_j$.
However,  by Assumption  \ref{assumption}, we have 
$0 \leq |i-\alpha_j| \leq m  \leq q-2$ and hence $(q-1)$ does not divide $|i-\alpha_j|$. 
  Therefore, we conclude that all entries except the $r$ pivot entries (having value $1$) are zero, in other words $M_{ij} = \delta_{i, \alpha_j}$. 
Either $\alpha_1=1$ or $\alpha_1 >1$.  If $\alpha_1=1$, then the first column of $M$ is $(1,0, \dots,0)$ and hence $\nu_m(1,0) \in W$. Since $W$ is $GL_2(F)$-invariant, we see that $C_m(F)$ is contained in $W$. However, the linear  span of $C_m(F)$ is all of $D_m V$ which contradicts the fact that $W$ is a proper subspace of $D_m V$. If $\alpha_1 >1$, the first row of $M$ consists of zeros, and hence $W$ is contained in the osculating hyperplane $\mO_{m-1}(\nu_m(0,1))$. Since $W$ is $GL_2(F)$-invariant, we see that $W$ is contained in $\mO_{m-1}(\nu_m(s,t))$ for all $(s,t) \in \bP^1(F)$, and hence in $W_0$. \\

Finally, we  show the equivalence of  (5) and (1):
It follows from Proposition \ref{equivs} below (parts (1) and (3))  that any $GL_2(F)$ equivariant linear map between  $D_m V$ and $\text{Sym}^m(V^*) \otimes  (\otimes^m \wedge^2 V)$  is a scalar multiple of $A_m$, and 
any $GL_2(F)$ equivariant linear map between  $D_m V$ and $\mathrm{Sym}^m V$ is a scalar multiple of $T_m$. Thus, the condition (1) that the binomial coefficients $\binom{m}{0}, \dots, \binom{m}{m}$ are all nonzero in $F$, is equivalent to  
the condition that the spaces  $D_m V, \text{Sym}^m V$ and $\text{Sym}^m(V^*) \otimes  (\otimes^m \wedge^2 V )$ are mutually isomorphic as $GL_2(F)$-modules.

\begin{prop} \label{equivs} Under Assumption \ref{assumption} we have:
\begin{enumerate}
\item \hfil$\displaystyle \mathrm{Hom}_{GL_2(F)}\left( D_m V,  \mathrm{Sym}^m(V^*) \otimes  (\otimes^m \wedge^2 V)\right)$  is the one-dimensional space generated by $A_m $. 
\item There is a $PGL_2(F)$-equivariant linear isomorphism $\bP( D_m V)  \to \bP(\text{Sym}^m(V^*))$  if and only if $\binom{m}{0}, \dots, \binom{m}{m}$ are all nonzero in $F$. In this case $A_m$ is the unique such isomorphism.

\item  \hfil$\displaystyle  \mathrm{Hom}_{GL_2(F)}\left(D_m V, \mathrm{Sym}^m V\right)$   is the one-dimensional space generated by $T_m$.
\item There is a $PGL_2(F)$-equivariant linear isomorphism $\bP(D_m V)  \to \bP( \text{Sym}^m V) $ if and only if $\binom{m}{0}, \dots, \binom{m}{m}$ are all nonzero in $F$. In this case, $T_m$ is the unique such isomorphism.
\end{enumerate}
\end{prop}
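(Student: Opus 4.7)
The strategy is a two-step weight analysis using the diagonal torus $T = \{d_\lambda = \bbsm 1 & 0 \\ 0 & \lambda \besm\}$ together with one unipotent element $u = \bbsm 1 & 0 \\ 1 & 1 \besm$. Under Assumption \ref{assumption}, the $m+1$ weights of $d_\lambda$ appearing on source and target lie in an interval of length at most $m < |F^\times|$, hence are distinct as characters of $T$. This forces any equivariant linear map to be supported only on matched pairs of weight vectors, and a single equivariance against $u$ then determines the remaining $m+1$ scalars up to one global constant via a combinatorial identity.

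For part (1), let $\Phi \in \mathrm{Hom}_{GL_2(F)}\left(D_mV, \mathrm{Sym}^m(V^*) \otimes (\otimes^m \wedge^2 V)\right)$, and write $v_i = e_1^{[m-i]}e_2^{[i]}$ and $w_j = X^{m-j}Y^j \otimes (e_1 \wedge e_2)^{\otimes m}$. The matrix $d_\lambda$ acts on $v_i$ by $\lambda^i$ and on $w_j$ by $\lambda^{-j}\cdot \lambda^m = \lambda^{m-j}$; combined with $|F| \geq m+2$, equivariance forces $\Phi(v_i) = c_i w_{m-i}$ for some scalars $c_i \in F$. The divided-power identity $u^{[m]} v_i = \sum_{l=i}^m \binom{l}{i} v_l$ and the expansion $u_m(X^i Y^{m-i}) = X^i(Y-X)^{m-i}$, upon equating coefficients in $u \cdot \Phi(v_i) = \Phi(u^{[m]} v_i)$, yield
\[ c_l \binom{l}{i} = c_i (-1)^{l-i} \binom{m-i}{l-i}, \qquad 0 \leq i \leq l \leq m. \]
Setting $i = 0$ gives $c_l = (-1)^l \binom{m}{l} c_0$, matching the entries of $A_m$; hence $\Phi = c_0 A_m$. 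Part (3) runs in parallel: the basis $e_1^{m-i}e_2^i$ of $\mathrm{Sym}^m V$ has weight $\lambda^i$ matching $v_i$, so $\Phi(v_i) = c_i e_1^{m-i}e_2^i$, and the recursion $c_l \binom{l}{i} = c_i \binom{m-i}{l-i}$ at $i=0$ gives $c_l = \binom{m}{l} c_0$, whence $\Phi = c_0 T_m$.

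For parts (2) and (4), a $PGL_2(F)$-equivariant linear iso $\bP(D_mV) \to \bP(\mathrm{Sym}^m(V^*))$ lifts to a linear iso $\Phi$ satisfying $g_m \Phi = \chi(g)\,\Phi\,g^{[m]}$ for some character $\chi: GL_2(F) \to F^\times$. Repeating the torus step together with bijectivity of $\Phi$ pins $\chi(d_\lambda)=\lambda^{-m}$, and equating $(i,l)=(0,0)$ coefficients in the unipotent step further forces $\chi(u)=1$; the resulting recursion gives $\Phi = c_0 A_m$, and since $A_m$ satisfies \eqref{eq:Am_equivariance} with twist $\det^{-m}$, we conclude $\chi = \det^{-m}$ on all of $GL_2(F)$. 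Thus a $PGL_2(F)$-equivariant projective iso exists iff $A_m$ is invertible, iff every $\binom{m}{i}$ is nonzero in $F$, in which case $A_m$ is unique up to scalar. Part (4) is analogous with $T_m$ in place of $A_m$ and trivial twist. The main technical obstacle is the correct handling of divided-power manipulations and the binomial identity $\binom{m}{i}\binom{m-i}{l-i}=\binom{l}{i}\binom{m}{l}$ underlying the unipotent recursion; secondarily, the step of keeping the $m+1$ torus weights distinct modulo $|F^\times|$ when $F$ is finite is exactly where the hypothesis $|F|\geq m+2$ enters essentially.
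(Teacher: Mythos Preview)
Your arguments for parts (1) and (3) are correct and essentially the same as the paper's: diagonal torus to pin the support, then the lower-triangular unipotent $u=\bbsm 1&0\\1&1\besm$ to get the binomial recursion. The paper additionally checks the flip $\bbsm 0&1\\1&0\besm$, but as your recursion at $i=0$ already determines all the $c_l$, that step is redundant.

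Parts (2) and (4) have a genuine gap in the finite field case. Your sentence ``Repeating the torus step together with bijectivity of $\Phi$ pins $\chi(d_\lambda)=\lambda^{-m}$'' is not justified and is in fact false as stated. From $d_\lambda$-equivariance one only gets that the nonzero entries $\Phi_{ji}$ satisfy $i+j\equiv s\pmod{q-1}$ for some fixed $s$ with $\chi(d_\lambda)=\lambda^{-s}$, and bijectivity does \emph{not} force $s=m$. For instance, take $m=3$, $q=5$ (so $q-1=m+1$): a matrix supported on the positions $(0,0),(1,3),(2,2),(3,1)$ (i.e.\ the anti-diagonals $i+j=0$ and $i+j=4$) can be invertible, corresponding to $s=0\neq m$. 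Even if one uses the full diagonal torus $\mathrm{diag}(a,d)$, or equivalently notes that $\chi$ factors through $\det$ (via perfectness of $SL_2(F)$ for $|F|\geq 4$), one only obtains $2s\equiv 2m\pmod{q-1}$, which for $q$ odd still allows $s=m\pm(q-1)/2$. The paper handles precisely this residual ambiguity by a further case analysis: when $s\neq m$ the matrix is forced into a block antidiagonal shape, and then a single entry of the unipotent equation produces a zero on what must be a nonzero diagonal, contradicting invertibility. Your compressed argument skips this, so the ``only if'' direction of (2) and (4) is incomplete. Once $s=m$ is established, your remaining steps (the $(0,0)$ coefficient giving $\chi(u)=1$, the recursion yielding $c_0A_m$, and bootstrapping to $\chi=\det^{-m}$) are fine.
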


\begin{proof} Proof of (1): Let $A \in \text{Hom}_{GL_2(F)}\left( D_m V, \text{Sym}^m(V^*) \otimes  (\otimes^m \wedge^2 V)\right)$, i.e.  
\[ (g^{[m]})^t A   g^{[m]}  =  \text{det}(g)^m   A, \quad \text{  for all $g \in GL(V)$}.\]
 Taking $g$ to be represented by the matrix  $\bbsm 1 & \\ & \lambda \besm$, we note that  $g^{[m]}$ is represented by the matrix 
\[ \bbsm 1 & & &\\ & \lambda & &\\ &&\ddots &\\ &&&\lambda^m \besm.\]
Therefore, $ (\lambda^{i+j-2}-\lambda^m) A_{ij}=0$.  Suppose $i+j \neq m+2$ and $A_{ij} \neq 0$, then we must have $\lambda^{m+2-i-j}=1$ for all $\lambda \in F^{\times}$.
If $F$ is infinite, this is impossible. If $|F|=q$ is finite, then by  Assumption \ref{assumption}  we have $|m+2-i-j| \leq m \leq q-2$ which implies that $(q-1)$ does not divide $(m+2-i-j)$ and hence the condition that $\lambda^{m+2-i-j}=1$ for all $\lambda \in F^{\times}$ is impossible. Therefore, we conclude that all non-zero entries of $A$ are on the anti-diagonal. Let  \[ A = \bbsm &&&a_1\\&&a_2&\\&\iddots&&\\a_{m+1}&&&\besm.\] Taking $g = \bbsm &1\\1&\besm$ we get $a_{i} = a_{m+2-i} (-1)^m$. 
Next, taking $g=\bbsm 1 & 0\\ 1& 1 \besm$, we note that  $g^{[m]}$ is represented by the lower triangular matrix whose $ij$-th entries are $\binom{i-1}{j-1}$ and hence:
\[ a_i    \tbinom{m+1-i}{j-1} =         \tbinom{m+1-j}{i-1} (-1)^{m+2-j-i} a_{m+2-j} =  \tbinom{m+1-j}{i-1}         (-1)^{i+j} a_j. \]
Putting $i=1$, we get 
\[ a_j= (-1)^{j-1} \tbinom{m}{j-1}  a_1,\]
 that is,  $A=a_1 A_m$ as required.\\

Proof of (3): Let $T \in \text{Hom}_{GL_2(F)}\left( D_m V,\text{Sym}^m V\right)$, i.e.  
\[ T   g^{[m]}  = g^m   T, \quad \text{  for all $g \in GL(V)$}.\]
 Taking $g$ to be represented by the matrix  $\bbsm 1 & \\ & \lambda \besm$, we note that  both $g^{[m]}$ and $g^m$ are represented by the matrix 
\[ \bbsm 1 & & &\\ & \lambda & &\\ &&\ddots &\\ &&&\lambda^m \besm.\]
Therefore, $ (\lambda^{i-j}-1) T_{ij}=0$.  Suppose $i \neq j$ and $T_{ij} \neq 0$, then we must have $\lambda^{i-j}=1$ for all $\lambda \in F^{\times}$.
If $F$ is infinite, this is impossible. If $|F|=q$ is finite, then by Assumption \ref{assumption},   we have $|i-j| \leq m \leq q-2$ and hence  $(q-1)$ does not divide $|i-j|$. 
Therefore, we conclude that $T$ is a diagonal matrix. Let $t_i$ denote $T_{ii}$. Taking $g = \bbsm &1\\1&\besm$ we get $t_{i} = t_{m+2-i}$. Next, taking $g=\bbsm 1 & 0\\ 1& 1 \besm$, we note that  $(g^{[m]})_{ij}=\binom{i-1}{j-1}$  as above and $(g^m)_{ij} = \binom{m+1-j}{m+1-i}$ we get: 
\[ a_i \tbinom{i-1}{j-1} = \tbinom{m+1-j}{m+1-i} a_j,\]
and hence $a_i = \binom{m}{i-1} a_1$, i.e. $T = a_1 T_m$.\\

Proofs of (2) and (4):  If  $\binom{m}{0}, \dots, \binom{m}{m}$ are all nonzero in $F$, then  the matrices $A_m$ and $T_m$ are invertible and represent $PGL_2(F)$-equivariant maps 
$\bP( D_m V)  \to \bP(\text{Sym}^m(V^*))$ and $\bP( D_m V)  \to \bP(\text{Sym}^m V)$, respectively. 
We now treat the converse of assertions (2) and (4). If the matrix $A \in GL_{m+1}(F)$ represents a  $PGL_2(F)$-equivariant linear isomorphism $\bP( D_m V)  \to \bP(\text{Sym}^m(V^*))$, then there is a function $\chi: GL_2(F) \to F^{\times}$ such that   then we have $(g^{[m]})^t A g^{[m]}  = \chi(g)  A$ for each $g \in GL_2(F)$.
Taking  indices $i,j$ such that $A_{ij} \neq 0$, and comparing the left and right sides of 
 $(g^{[m]})^t A g^{[m]}  = \chi_g  A$, it follows that there is a homogeneous polynomial $\tilde \chi(a,b,c,d)$ of degree $2m$ in the polynomial ring $F[a,b,c,d]$ such that $\chi(g) = \tilde \chi(g)$. It is also clear that $\chi(g)$ is a homomorphism.  We recall that any rational homomorphism $GL_2(F) \to F^{\times}$ is of the form $\chi_r(g)=\det(g)^s$ for $s\in \bZ$. If $|F|=q$ is finite then $s \in \bZ/(q-1)\bZ$ i.e. $0 \leq s \leq q-2$.
 Similarly, assertion (4) is equivalent to the existence of  $T \in GL_{m+1}(F)$ for which $ T g^{[m]}  = \chi'(g)  g^m T$ for some rational homomorphism $\chi':GL_2(F) \to F^\times$. Let $\chi(g)=\chi_r(g)$ and let $\chi'(g) = \chi_{r'}(g)$. By taking the determinant of $(g^{[m]})^t A g^{[m]}  = \det(g)^r  A$
 and $T g^{[m]}  = \det(g)^{r'}  g^m T$ and using the fact  that det$(g^{[m]}) =\det(g^m)  = \det(g)^{m(m+1)/2}$ we get $\det(g)^{(m+1)(m-r)}=1$ and $\det(g)^{r'(m+1)}=1$. If $F$ is infinite, this immediately forces $r=m$ and $r'=0$. At this stage, we have $(g^{[m]})^t A g^{[m]}  = \det(g)^m  A$  and $T g^{[m]}  =  g^m T$, and we appeal to parts 1) and 3) to conclude that $A = A_m$ and $T=T_m$, (up to a scalar multiple), respectively. If $F = \bF_q$ is finite, we will again show that $r=m$ and $r'=0$ (and hence 
 conclude $A = A_m$ and $T=T_m$), but it requires a more careful analysis.
 
We take $0 \leq r, r' \leq q-2$.  We first consider assertion 2). Taking $g = \bbsm a & \\ & d\besm$, we have $\chi(g)=(ad)^r$, and the $(ij)$-th entry of the equation $(g^{[m]})^t A g^{[m]}  = \det(g)^r  A$ reads $(a^{2m-(i+j-2)}d^{(i+j-2)}-(ad)^r)T_{ij}=0$. In other words, if $T_{ij} \neq 0$ we have:
\[a^{2m-(i+j-2)-r}d^{i+j-2-r}=1 \quad \text{ for all $a,d \in F^\times$}.\]
This in turn gives:
   \[ i+j-2 \equiv r \mod (q-1), \qquad   2(m-r) \equiv 0 \mod (q-1).\]
If $q$ is even, the above conditions are equivalent to $m=r$ (and $i+j-2=m$) as desired. If $q$ is odd, the  condition $2(m-r) \equiv 0 \mod (q-1)$ together with the assumption that $m \leq q-2$ gives $r \in \{m-\tfrac{q-1}{2}, m, m + \tfrac{q-1}{2}\}$. We argue that the cases $r=m \pm \tfrac{q-1}{2}$ are not possible: if $r=m - \tfrac{q-1}{2}$ then we have $T_{ij} \neq 0$ only if $i+j-2 \equiv m-\tfrac{q-1}{2}$ which is equivalent to $i+j-2  \in \{m - \tfrac{q-1}{2}, m+ \tfrac{q-1}{2}\}$. Similarly,  if $r=m + \tfrac{q-1}{2}$, we again get $i+j-2  \in \{m - \tfrac{q-1}{2}, m+ \tfrac{q-1}{2}\}$. In this case the $(q-2-m)$ columns $m+2- \tfrac{q-1}{2}, \dots, \tfrac{q-1}{2}$ of $A$ have all entries $0$ which contradicts the fact that $\det(A) \neq 0$. Therefore, $m=q-2$ and  the $(q-1) \times (q-1)$ matrix  $A$ is of the form 
\[ A = \left( \begin{array}{c|c} 
  \begin{smallmatrix} &&\alpha_{\frac{q-1}{2}}\\  & \iddots& \\   \alpha_1& &    \end{smallmatrix}   &  0 \\ 
  \hline 
  0 &   \begin{smallmatrix} &&\beta_{\frac{q-1}{2}}\\ & \iddots& \\  \beta_1& & \end{smallmatrix}
\end{array} \right). \]
Next we take  $g=\bbsm 1 & 0\\ 1& 1 \besm$ for which $\det(g)$ and hence $\chi(g)=1$. We note that  $g^{[m]}$ is represented by the lower triangular matrix whose $ij$-th entries are $\binom{i-1}{j-1}$ and hence, the $(m+1,1)$-th entry of the equation $(g^{[m]})^t A g^{[m]} = \chi(g) A$ reads $ \beta_1  = 0$, which  contradicts $\det(A) \neq 0$.\\

For assertion 4), again taking  $g = \bbsm a & \\ & d\besm$, we have $\chi(g)=(ad)^{r'}$, and the $(ij)$-th entry of the equation $ T g^{[m]}  = \det(g)^{r'}  g^m T$  reads $(a^{m-j+1)}d^{j-1}-a^{m-i+1+r'} d^{i-1+r'})T_{ij}=0$. In other words, if $T_{ij} \neq 0$ we have:
\[a^{i-j-r'}d^{j-i-r'}=1 \quad \text{ for all $a,d \in F^\times$}.\]
If $q$ is even, this is equivalent to  $r'\equiv 0 \mod (q-1)$  (and  $j-i \equiv 0 \mod (q-1)$) as required.  
If $q$ is odd, we get 
\[ j-i \equiv r' \equiv 0 \mod (q-1)/2. \]
We will show that the case $r'  \equiv  (q-1)/2 \mod (q-1)$ is not possible: indeed, if $r' = (q-1)/2$, then the nonzero entries of $T_{ij}$ occur only if $|i-j|=(q-1)/2$. The $(q-2-m)$ columns with indices $m+2- \tfrac{q-1}{2}, \dots, \tfrac{q-1}{2}$ have all entries $0$ which contradicts $\det(T) \neq 0$. Therefore, we must have $m=q-2$ and hence the 
 $(q-1) \times (q-1)$ matrix  $T$ is of the form 
\[ T = \left( \begin{array}{c|c} 
  0 & \begin{smallmatrix} \alpha_1&&\\  &\ddots& \\   &&\alpha_{\frac{q-1}{2}} \end{smallmatrix}    \\   \hline 
   \begin{smallmatrix} \beta_1&&\\ &\ddots& \\  &&\beta_{\frac{q-1}{2}} \end{smallmatrix} & 0
\end{array} \right). \]
Next taking $g=\bbsm 1 & 0\\ 1& 1 \besm$ for which $\det(g)$ and hence $\chi'(g)=1$. We note that  
$(g^{[m]})_{ij}=\tbinom{i-1}{j-1}$  and  $(g^m)_{ij} = \tbinom{q-1-j}{q-1-i}$. The $(1,1)$-entry of the equation $ T g^{[m]}  = \chi'(g)  g^m T$ reads  $\alpha_1=0$ contradicting the fact  that $\det(T) \neq 0$. 
\end{proof}

\begin{rem} \label{exteriordual}
 We recall the standard fact that  for a finite-dimensional vector space $W$ over an arbitrary field $F$, unlike the case with Sym$^m(W)$, there is a $GL(W)$ equivariant isomorphism between alternating power vector spaces $\wedge^m W$ and $(\wedge ^m W^*)^*$: for $w_1, \dots, w_m \in W$ and $\phi_1, \dots, \phi_m \in W^*$ the determinant of the $m \times m$ matrix whose $ij$-th entry is $\phi_i(w_j)$ gives a nondegenerate $GL(W)$-invariant bilinear map $\wedge^m W \times \wedge^m W^* \to F$, or equivalently  a $GL(W)$-equivariant linear isomorphism between  $\wedge^m W$ and $(\wedge ^m W^*)^*$.
\end{rem}

{\bf Conflict of interest statement:} On behalf of all authors, the corresponding author states that there is no conflict of interest. This manuscript has no associated data.

\bibliographystyle{amsplain}

\end{document}